\title[Wave equation on hyperbolic spaces]
{The wave equation on hyperbolic spaces}
\author{Jean--Philippe Anker}
\address{Universit\'e d'Orl\'eans \& CNRS,
F\'ed\'eration Denis Poisson (FR 2964) \& Laboratoire MAPMO (UMR 6628),
B\^atiment de math\'ematiques -- Route de Chartres,
B.P. 6759 -- 45067 Orl\'eans cedex 2 -- France}
\email{anker@univ-orleans.fr}
\author{Vittoria Pierfelice}
\address{Universit\'e d'Orl\'eans \& CNRS,
F\'ed\'eration Denis Poisson (FR 2964) \& Laboratoire MAPMO (UMR 6628),
B\^atiment de math\'ematiques -- Route de Chartres,
B.P. 6759 -- 45067 Orl\'eans cedex 2 -- France}
\email{vittoria.pierfelice@univ-orleans.fr}
\author{Maria Vallarino}
\address{Politecnico di Torino,
Dipartimento di Matematica --
Corso Duca degli Abruzzi 24 --
10129 Torino --
Italia}
\email{maria.vallarino@polito.it}
\thanks{This work was mostly carried out
while the third author was a CNRS postdoc
at the \textit{F\'ed\'eration Denis Poisson\/} Orl\'eans--Tours.
It was further supported by the Progetto GNAMPA 2010
\textit{Analisi armonica su gruppi di Lie e variet\`a Riemanniane\/}
and by the PHC Galil\'ee 25970QB \textit{VAMP}}
\date{\today}
\subjclass[2000]{35L05, 43A85\,;
22E30, 35L71, 43A90, 47J35, 58D25, 58J45}
\keywords{Hyperbolic space, semilinear wave equation, 
dispersive estimate, Strichartz estimate, local well--posedness}
\newtheorem{lemma}{Lemma}[section]
\newtheorem{theorem}[lemma]{Theorem}
\newtheorem{proposition}[lemma]{Proposition}
\newtheorem{corollary}[lemma]{Corollary}
\newtheorem{remark}[lemma]{Remark}
\newtheorem{definition}[lemma]{Definition}
\newcommand\1{1\hskip-1mm\mathrm{I}}
\newcommand{\bc}{\mathbf{c}\hspace{.1mm}}
\newcommand{\C}{\mathbb{C}}
\newcommand\const{\operatorname{const.}}
\newcommand{\Hn}{\mathbb{H}^n}
\renewcommand\Im{\operatorname{Im}}
\newcommand{\N}{\mathbb{N}}
\renewcommand\Re{\operatorname{Re}}
\newcommand{\R}{\mathbb{R}}
\newcommand{\Rn}{\R^n}
\newcommand{\Sn}{\mathbb{S}^{n-1}}
\newcommand\ssb{\hskip-.25mm}
\newcommand\ssf{\hskip.25mm}
\newcommand{\Z}{\mathbb{Z}}
\begin{document}

\begin{abstract}  
We study the dispersive properties of the wave equation associated
with the shifted Laplace--Beltrami operator on real hyperbolic spaces
and deduce new Strichartz estimates for a large family of admissible pairs. 
As an application, we obtain  local well--posedness results
for the nonlinear wave equation.
\end{abstract}

\maketitle

\section{Introduction}\label{Introduction}
The aim of this paper is to study the dispersive properties of the linear wave equation on real hyperbolic spaces and their application to nonlinear Cauchy problems.  

This theory is well established for the wave equation on $\Rn$\,:
\begin{equation}\label{WaveEuclidean}
\begin{cases}
&\partial_{\ssf t}^{\ssf 2} u(t,x)-\Delta_{\ssf x}u(t,x)=F(t,x)\,,\\
&u(0,x)=f(x)\,,\\
&\partial_{\ssf t}|_{t=0}\,u(t,x)=g(x)\,,\\
\end{cases}
\end{equation}
for which the following Strichartz estimates hold\,:
\begin{equation}\label{eqR3}
\|u\|_{L^p(I;\,L^q)}\ssb
+\ssf\|u\|_{L^\infty(I;\,\dot{H}^{\sigma})}\ssb
+\ssf\|\partial_{\ssf t}u \|_{L^\infty(I;\,\dot{H}^{\sigma-1})}\ssb
\lesssim\ssf\|f\|_{\dot{H}^{\sigma}}\ssb
+\ssf\|g\|_{\dot{H}^{\sigma-1}}\ssb
+\ssf\|F\|_{L^{\tilde{p}'}(I;\,\dot{H}_{\tilde{q}'}^{\sigma+\tilde{\sigma}-1})}    
\end{equation}
on any (possibly unbounded) interval \ssf$I\!\subseteq\ssb\R$\ssf,
under the assumptions that  
\begin{equation*}\textstyle
\sigma = \frac{n+1}2\ssf\bigl(\frac12\ssb-\ssb\frac1q\bigr)\,,\quad 
\tilde{\sigma}=\frac{n+1}2\ssf\bigl(\frac12\ssb-\ssb\frac1{\tilde{q}}\bigr)\,,
\end{equation*}
and separately the couples
\ssf$(p,q),\ssf(\tilde{p},\tilde{q})\ssb
\in\ssb(2,\infty\ssf]\ssb\times\ssb[\ssf2,2\,\frac{n-1}{n-3})$
\ssf satisfy the admissibility conditions 
\begin{equation*}\textstyle
\frac2p\ssb+\ssb\frac{n-1}q\ssb=\ssb\frac{n-1}2\ssf,\quad
\frac2{\tilde{p}}\ssb+\ssb\frac{n-1}{\tilde{q}}\ssb=\ssb\frac{n-1}2\ssf.
\end{equation*}
The estimate (\ref{eqR3}) holds also
at the endpoint $(2,2\,\frac{n-1}{n-3})$ when $n\geq4$.
When $n=3$ this endpoint is $(2,\infty)$
and the estimate (\ref{eqR3}) fails in this case without additional assumptions
(see \cite{GV} and \cite{KT} for more details).

These estimates yield existence results
for the nonlinear wave equation in the Euclidean setting.
The problem of finding minimal regularity on initial data
ensuring local well--posedness for semilinear wave equation
was addressed for higher dimensions and nonlinearities in \cite{Ka},
and then almost completely answered in \cite{ LS, GLS, KT, DGK}.

Once the Euclidean case was more or less settled,
several attemps have been made in order to establish
Strichartz estimates for dispersive equations in other settings.
Here we consider real hyperbolic spaces \ssf$\Hn$,
which are the most simple examples of
noncompact Riemannian manifolds with negative curvature.
For geometric reasons,
we expect better dispersive properties hence stronger results
than in the Euclidean setting.

It is well known that the spectrum of
the Laplace--Beltrami operator \ssf$-\ssf\Delta_{\Hn}$ on $L^2(\Hn)$
is the half--line \ssf$[\ssf\rho^2,+\infty)$\ssf,
where \ssf$\rho\ssb=\!\frac{n-1}2$.
Thus one may study either the {\it non--shifted\/} wave equation
\begin{equation}\label{nonshiftedWave}
\begin{cases}
& \partial_t^{\ssf 2}u(t,x)-\Delta_{\Hn}u(t,x)=F(t,x)\,,\\
& u(0,x)=f(x)\,,\\
& \partial_t|_{t=0}\,u(t,x)=g(x)\,,
\end{cases}
\end{equation}
or the {\it shifted\/} wave equation
\begin{equation}\label{shiftedWave}
\begin{cases}
& \partial_t^{\ssf 2} u(t,x) -(\Delta_{\Hn}\!+\!\rho^2)\,u(t,x) = F(t,x)\,,\\
& u(0,x) = f(x)\,,\\
& \partial_t|_{t=0}\,u(t,x) = g(x)\,.
\end{cases}
\end{equation}

In \cite{P} Pierfelice derived Strichartz estimates
for the wave equation (\ref{nonshiftedWave}) with radial data
on a class of Riemannian manifolds containing all hyperbolic spaces.
The wave equation (\ref{nonshiftedWave}) was also investigated
on the 3--dimensional hyperbolic space by Metcalfe and Taylor \cite{MT},
who proved dispersive and Strichartz estimates
with applications to small data global well--posedness
for the semilinear wave equation.
This result was recently generalized by Anker and Pierfelice \cite{AP4}
to other dimensions.
Another recent work \cite{Ha} by Hassani
contains a first study of \eqref{nonshiftedWave}
on general Riemannian symmetric spaces of noncompact type.

To our knowledge,
the semilinear wave equation (\ref{shiftedWave})
was first considered by Fontaine \cite{F1, F2}
in dimension $n\!=\!3$ and $n\!=\!2$.
The most famous work involving (\ref{shiftedWave}) is due to Tataru.
In \cite{Ta} he obtained dispersive estimates for the operators
$\frac{\sin\ssf\left(t\ssf\sqrt{\Delta_{\Hn}+\rho^2}\ssf\right)}
{\sqrt {\Delta_{\Hn} +\rho^2}}$
and $\cos\ssf\bigl(\ssf t\ssf\sqrt{\Delta_{\Hn}\!+\!\rho^2}\ssf\bigr)$
acting on inhomogeneous Sobolev spaces
and then transferred them from $\Hn$ to $\Rn$
in order to get well--posedness results
for the Euclidean semilinear wave equation
(see also \cite{G}). Though Tataru proved dispersive estimates with exponential decay in time, these are not sufficient to obtain actual Strichartz estimates on hyperbolic spaces.
Complementary results were obtained by Ionescu \cite{I1},
who investigated $L^q\!\to\!L^q$ Sobolev estimates
for the above operators on all hyperbolic spaces.
\smallskip 

In this paper we pursue our study of dispersive equations on hyperbolic spaces,
initiated with the Schr\"odinger equation \cite{AP},
by considering the shifted wave equation (\ref{shiftedWave}) on \ssf$\Hn$.
We obtain a wider range of Strichartz estimates than in the Euclidean setting
and deduce stronger well-posedness results.
More precisely, in Section \ref{Kernel}
we use spherical harmonic analysis on hyperbolic spaces
to estimate the kernel of the operator
$W_t^{(\sigma,\tau)}\!=\ssb
D^{-\tau}\ssf\tilde{D}^{{\tau-\sigma}}\ssf e^{itD}$,
where $D\ssb=\ssb(-\ssf\Delta_{\Hn}\!-\!\rho^2)^{1/2}$,
$\tilde{D}\ssb=\ssb(-\ssf\Delta_{\Hn}\!+\!\tilde{\rho}^2-\rho^2)^{1/2}$
with $\tilde{\rho}\ssb>\ssb\rho\ssf$,
and $\sigma$, $\tau$ are suitable exponents.
In Section \ref{Dispersive} we first deduce
dispersive $L^{q'}\!\to\!L^q$ estimates for $W_t^{(\sigma,\tau)}$,
when $2\!<\!q\!<\!\infty$\ssf,
by using interpolation and the Kunze--Stein phenomenon \cite{Co1, Co2, I2}.
In Section \ref{Strichartz} we next deduce
the following  strong Strichartz estimates for solutions
to the Cauchy problem (\ref{shiftedWave})\,:
\begin{equation}\label{StrichartzEst}
\begin{aligned}
&\|u\|_{L^p(I\ssf;\,L^q)}\ssb
+\ssf\|u\|_{L^\infty\left(I\ssf;\,H^{\ssf\sigma-\frac12,\frac12}\right)}\!
+\ssf\|\partial_{\ssf t}u\|_{L^\infty\left(I\ssf;\,
H^{\ssf\sigma-\frac12,-\frac12}\right)}\\
&\lesssim\,\|f\|_{H^{\ssf\sigma-\frac12,\frac12}}\ssb
+\ssf\|g\|_{H^{\ssf\sigma-\frac12,-\frac12}}\ssb
+\ssf\|F\|_{L^{\tilde{p}'}\left(I\ssf;\,
H_{\tilde{q}'}^{\ssf\sigma+\tilde{\sigma}-1}\right)}\,,
\end{aligned}
\end{equation}
where $I$ is any (possibly unbounded) interval in \ssf$\R$\ssf,
\ssf$(p,q),\ssf(\tilde{p},\tilde{q})\ssb
\in\ssb[\ssf2,\infty)\ssb\times\ssb[\ssf2,\infty)$
are admissible couples such that separately
\begin{equation*}\textstyle
\frac2p\ssb+\ssb\frac{n-1}q\ssb\ge\ssb\frac{n-1}2\,,\quad
\frac2{\tilde p}\ssb+\ssb\frac{n-1}{\tilde q}\ssb\ge\ssb\frac{n-1}2\,,
\end{equation*}
and \ssf$\sigma\!\ge\!\frac{n+1}2\ssf\big(\frac12\!-\!\frac1q\big)$\ssf,
$\tilde{\sigma}\!\ge\!\frac{n+1}2\ssf\big(\frac12\!-\!\frac1{\tilde{q}}\big)$\ssf.
Notice that the Sobolev spaces involved in (\ref{StrichartzEst})
are naturally related to the conservation laws of the shifted wave equation
(see Section \ref{Sobolev}).
We conclude in Section \ref{LWP} with an application of \eqref{StrichartzEst}
to local well--posedness of the nonlinear wave equation
for initial data with low regularity. While we obtain the same regularity curve as in the Euclidean case
for subconformal powerlike nonlinearities,
we prove local well--posedness for superconformal powers
under lower regularity assumptions on the inital data.
\smallskip

In order to keep down the length of this paper,
we postpone applications of the Strichartz estimates 
to global well--posedness of the nonlinear wave equation
and generalizations of the previous results to Damek--Ricci spaces.

\section{Spherical analysis on real hyperbolic spaces}\label{Notation}

In this paper, we consider the simplest class of
Riemannian symmetric spaces of the noncompact type,
namely real hyperbolic spaces $\Hn$ of dimension \ssf$n\!\ge\!2$
\ssf(we shall restrict to \ssf$n\!\ge\!3$ \ssf in Section \ref{LWP})\ssf.
We refer to Helgason's books \cite{Hel1, Hel2, Hel3}
and to Koornwinder's survey \cite{Ko}
for their algebraic structure and geometric properties,
as well as for harmonic analysis on these spaces,
and we shall be content with the following information.
$\Hn$~can be realized as the symmetric space $G/K$,
where $G\ssb=\ssb\text{SO}(1,n)_0$ and $K\!=\ssb\text{SO}(n)$\ssf.
In geodesic polar coordinates on $\Hn$,
the Riemannian volume writes
\begin{equation*}\textstyle
dx=\const\,(\sinh r)^{n-1}\,dr\,d\sigma
\end{equation*}
and the Laplace--Beltrami operator
\begin{equation*}\textstyle
\Delta_{\ssf\Hn}
=\partial_r^{\ssf2}
+(n\!-\!1)\,\coth r\,\partial_r
+\ssf\sinh^{-2}\ssb r\,\Delta_{\ssf\Sn}\,.
\end{equation*}
The spherical functions \ssf$\varphi_\lambda$ on \ssf$\Hn$ are
normalized radial eigenfunctions of $\Delta_{\ssf\Hn}$\,:
\begin{equation*}\begin{cases}
\;\Delta_{\ssf\Hn}\varphi_\lambda=-(\lambda^2\!+\!\rho^2)\,\varphi_\lambda\,,\\
\;\varphi_\lambda(0)=1\,,
\end{cases}\end{equation*}
where \ssf$\lambda\!\in\!\mathbb C$ \ssf
and \ssf$\rho\ssb=\ssb\frac{n-1}2$\ssf.
They can be expressed in terms of special functions\,:
\begin{equation*}\textstyle
\varphi_\lambda(r)
=\phi_{\,\lambda}^{(\frac n2-1,-\frac12)}(r)
={}_2F_1\bigl(\frac\rho2\ssb+\ssb i\ssf\frac\lambda2,
\frac\rho2\ssb-\ssb i\ssf\frac\lambda2;
\frac n2;-\sinh^2\ssb r\bigr)\ssf,
\end{equation*}
where $\phi_\lambda^{(\alpha,\beta)}$ denotes the Jacobi functions
and ${}_2F_1$ the Gauss hypergeometric function.
In the sequel we shall use the integral representations
\begin{equation}\label{intrepr}\begin{aligned}
\varphi_\lambda(r)\,
&=\int_Kdk\;e^{-(\rho+i\lambda)\,\text{H}(a_{-r}k)}\\
&=\,\frac{\Gamma(\frac n2)}{\sqrt{\pi}\,\Gamma(\frac{n-1}2)}\,
\int_0^\pi\,d\theta\,(\sin\theta)^{n-2}\,
(\cosh r\,-\,\sinh r\cos\theta)^{-\rho-i\lambda}\\
&=\,\pi^{-\frac12}\,2^{\frac{n-3}2}
\frac{\Gamma(\frac n2)}{\Gamma(\frac{n-1}2)}\,(\sinh r)^{2-n}
{\int_{-r}^{+r}}\,du\,(\cosh r\,-\,\cosh u)^{\frac{n-3}2}e^{-i\lambda u}\,,
\end{aligned}\end{equation}
which imply in particular that
\begin{equation}\label{phi0}
|\varphi_\lambda(r)|\leq\varphi_0(r)\lesssim (1\ssb+\ssb r)\,e^{-\rho\ssf r}
\qquad\forall\;\lambda\!\in\!\R\,,\;r\!\ge\!0\,.
\end{equation}
We shall also use the Harish--Chandra expansion
\begin{equation}\label{HCexpansion}
\varphi_\lambda(r)
=\mathbf{c}\hspace{.1mm}(\lambda)\,\Phi_\lambda(r)
+\mathbf{c}\hspace{.1mm}(-\lambda)\,\Phi_{-\lambda}(r)
\qquad\forall\;\lambda\!\in\!\C\!\smallsetminus\!\Z\ssf,\;r\!>\!0\ssf,
\end{equation}
where the Harish--Chandra $\mathbf{c}$--function is given by
\begin{equation}\label{cfunction}\textstyle
\mathbf{c}\ssf(\lambda)
=\frac{\Gamma\ssf(\ssf2\ssf\rho\ssf)}{\Gamma\ssf(\ssf\rho\ssf)}\,
\frac{\Gamma\ssf(\ssf i\ssf\lambda\ssf)}
{\Gamma\ssf(\ssf i\ssf\lambda\ssf+\ssf\rho\ssf)}
\end{equation}
and
\begin{equation}\label{Philambda}\begin{aligned}
\Phi_{\lambda}(r)&\textstyle
\,=(\ssf2\sinh r)^{\ssf i\lambda-\rho}\,{}_2F_1\bigl(
\frac\rho2\!-\!i\ssf\frac\lambda2,-\frac{\rho-1}2\!-\!i\ssf\frac\lambda2\ssf;
1\!-\!i\ssf\lambda\ssf;-\sinh^{-2}\ssb r\bigr)\\
&=\,(\ssf2\sinh r)^{-\rho}\,e^{\ssf i\ssf\lambda\ssf r}\,
\sum\nolimits_{k=0}^{+\infty}\,\Gamma_k(\lambda)\,e^{-2\ssf k\ssf r}\\
&\sim\,e^{\ssf(i\lambda-\rho)\ssf r}
\qquad\text{as \,}r\!\to\!+\infty\,.
\end{aligned}\end{equation}
It is well known that
there exist $\nu\!>\!0$\ssf, $\varepsilon\!>\!0$ and $C\!>\!0$
such that, for every $k\!\in\!\N$ and $\lambda\!\in\!\C$
with $\Im\lambda\!>\!-\varepsilon$\ssf,
\begin{equation*}
|\Gamma_k(\lambda)|\le C\,(1\!+\!k)^\nu\,.
\end{equation*}
We need to improve upon this estimate,
by enlarging the domain, by estimating the derivatives of $\Gamma_k$  
and by gaining some additional decay in $\lambda$ \ssf for $k\!\in\!\N^*$.
The following recurrence formula holds\,:
\begin{equation*}\begin{cases}
\;\Gamma_0(\lambda)=1\\
\;\Gamma_k(\lambda)
=\frac{\rho\,(\rho-1)}{k\,(k-i\lambda)}\,
\sum_{\ssf j=0}^{\ssf k-1}\,(k\,-\,j)\,\Gamma_j(\lambda)\,.
\end{cases}\end{equation*}

\begin{lemma}
Let \,$0\!<\!\varepsilon\!<\!1$
and \,$\Omega_\varepsilon\ssb
=\{\,\lambda\!\in\!\C\mid
|\Re\lambda\ssf|\ssb\le\ssb\varepsilon\,|\lambda|\ssf,
\,\Im\lambda\ssb\le\!-1\!+\!\varepsilon\,\}$\ssf.
Then, for every \ssf$\ell\!\in\!\N$\ssf,
there exists \ssf$C_\ell\!>\!0$ such that
\begin{equation}\label{derGammakappa}
\bigl|\,\partial_\lambda^{\,\ell}\ssf\Gamma_k(\lambda)\,\bigr|
\le C_\ell\,k^{\ssf\nu}\,(\ssf1\!+\ssb|\lambda|\ssf)^{-\ell-1}
\qquad\forall\;k\!\in\!\N^*,\,
\lambda\!\in\!\C\ssb\smallsetminus\ssb\Omega_\varepsilon\,.
\end{equation}
\end{lemma}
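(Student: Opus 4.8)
The plan is to argue directly from the recurrence formula; conceptually, the point is that $\Omega_\varepsilon$ has been designed so as to stay a \emph{fixed} distance — measured against $k+|\lambda|$ — away from the zeros of $\lambda\mapsto k-i\lambda$, $k\in\N^*$, which all lie on the half-line $\{-it:t\ge1\}$. No use is made of the a priori polynomial bound recalled above. First I would establish the geometric lower bound: there is $c_\varepsilon>0$ such that
\[
|\,k-i\lambda\,|\,\ge\,c_\varepsilon\,(k+|\lambda|)\qquad\forall\;k\in\N^*,\;\lambda\in\C\smallsetminus\Omega_\varepsilon .
\]
Writing $\lambda=\xi+i\eta$, so that $|k-i\lambda|^{2}=(k+\eta)^{2}+\xi^{2}$, one splits according to the definition of $\Omega_\varepsilon$. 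If $\Im\lambda=\eta>-1+\varepsilon$, then $k+\eta\ge\tfrac{\varepsilon}{2}k$ for every $k\ge1$ (an elementary inequality using $0<\varepsilon<1$), and combining this with $|k-i\lambda|\ge|\xi|$ and, when $\eta<0$, with $|\eta|<1$, gives the bound. If $\eta\le-1+\varepsilon$, then necessarily $|\xi|=|\Re\lambda|>\varepsilon|\lambda|$, whence also $|\xi|>\tfrac{\varepsilon}{\sqrt{1-\varepsilon^{2}}}|\eta|$; distinguishing $\eta\ge-\tfrac{k}{2}$ from $\eta<-\tfrac{k}{2}$ and using $|k-i\lambda|\ge\max\{|k+\eta|,|\xi|\}$ gives the bound again. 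In particular $|k-i\lambda|\ge c_\varepsilon k$ and $|k-i\lambda|\ge c_\varepsilon(1+|\lambda|)$ throughout $\C\smallsetminus\Omega_\varepsilon$.

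Next, the case $\ell=0$. Fix $\lambda\in\C\smallsetminus\Omega_\varepsilon$ and set $g_k(\lambda)=(k-i\lambda)^{-1}$; since $\Gamma_0\equiv1$, the recurrence becomes
\[
\Gamma_k(\lambda)\,=\,\rho(\rho-1)\,g_k(\lambda)\,+\,\tfrac{\rho(\rho-1)}{k}\,g_k(\lambda)\sum_{j=1}^{k-1}(k-j)\,\Gamma_j(\lambda) .
\]
I would prove $|\Gamma_k(\lambda)|\le C_0\,k^{\nu}(1+|\lambda|)^{-1}$ by induction on $k$. The case $k=1$ is $\Gamma_1(\lambda)=\rho(\rho-1)(1-i\lambda)^{-1}$ together with $|1-i\lambda|\ge c_\varepsilon(1+|\lambda|)$. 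In the inductive step the first term is $\le|\rho(\rho-1)|\,|k-i\lambda|^{-1}\le\tfrac{|\rho(\rho-1)|}{c_\varepsilon}(1+|\lambda|)^{-1}$, while the second term, after inserting the induction hypothesis, using $|k-i\lambda|\ge c_\varepsilon k$ and the routine estimate $\sum_{j=1}^{k-1}(k-j)j^{\nu}\le\tfrac{C}{\nu+1}k^{\nu+2}$ (integral comparison, $C$ absolute), contributes with a prefactor $\le\tfrac{C|\rho(\rho-1)|}{c_\varepsilon(\nu+1)}C_0$. Hence the induction closes provided $C_0$ is taken large and $\nu$ is taken large enough; enlarging $\nu$ is harmless, as it only weakens the statement. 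Absorbing this self-referential term is the main point of the argument.

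Finally the derivatives, by induction on $\ell\ge1$ with the case $\ell=0$ just done. Differentiating the recurrence $\ell$ times by the Leibniz rule, using $\partial_\lambda^{m}g_k(\lambda)=i^{m}m!\,(k-i\lambda)^{-m-1}$ and $\Gamma_0\equiv1$, yields
\[
\partial_\lambda^{\ell}\Gamma_k\,=\,\rho(\rho-1)\,\partial_\lambda^{\ell}g_k\,+\,\tfrac{\rho(\rho-1)}{k}\sum_{m=0}^{\ell}\binom{\ell}{m}\,\partial_\lambda^{m}g_k\sum_{j=1}^{k-1}(k-j)\,\partial_\lambda^{\ell-m}\Gamma_j .
\]
The term $m=0$ involves $\partial_\lambda^{\ell}\Gamma_j$ with $j<k$ and is handled exactly as before by an inner induction on $k$, again using $|k-i\lambda|\ge c_\varepsilon k$ and (after enlarging $\nu$) the same smallness. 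In each remaining term $\partial_\lambda^{\ell-m}\Gamma_j$, $1\le m\le\ell$, is already controlled — by the outer induction hypothesis when $\ell-m<\ell$, and by the case $\ell=0$ when $m=\ell$; one then uses $|\partial_\lambda^{m}g_k|\le m!\,c_\varepsilon^{-m-1}(k+|\lambda|)^{-m-1}$, the bound $\sum_{j=1}^{k-1}(k-j)j^{\nu}<k^{\nu+2}$, and the elementary inequality $k(1+|\lambda|)^{m}\le(k+|\lambda|)^{m+1}$ (valid since $k\ge1$) to rearrange the powers of $k$ and of $(1+|\lambda|)$ into the required form $C_\ell\,k^{\nu}(1+|\lambda|)^{-\ell-1}$. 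This closes the induction and proves the lemma. The only delicate step is the geometric lower bound of the first paragraph, which simultaneously neutralises every denominator $k-i\lambda$ and produces the gain $(1+|\lambda|)^{-1}$; granting it, the rest is a bookkeeping induction whose sole subtlety is the absorption of the self-referential terms, resolved by enlarging $\nu$.
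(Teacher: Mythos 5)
Your proof is correct, and it matches the paper's argument closely for the case $\ell=0$: both establish the same geometric lower bound $|k-i\lambda|\gtrsim\max\{k,1+|\lambda|\}$ on $\C\smallsetminus\Omega_\varepsilon$, and both run an induction on $k$ in the recurrence, absorbing the self-referential sum by taking $\nu$ large (so that $\frac{\rho^2}{A}\ssf\frac1{\nu+1}\!\le\!\frac12$ in the paper's notation). Where you diverge is the treatment of derivatives. The paper disposes of them in one line \emph{via} the Cauchy integral formula
\begin{equation*}
\partial_\lambda^{\ssf\ell}\ssf\Gamma_k(\lambda)
=\frac{\ell\ssf!}{2\ssf\pi\ssf i}\oint_{|\zeta-\lambda|=r}
\frac{\Gamma_k(\zeta)}{(\zeta-\lambda)^{\ell+1}}\,d\zeta\,,
\end{equation*}
with $r\asymp 1\!+\!|\lambda|$ chosen so that the circle of integration stays inside $\C\smallsetminus\Omega_{\varepsilon'}$ for some $0\!<\!\varepsilon'\!<\!\varepsilon$; this converts the $\ell=0$ bound on the slightly larger region directly into the stated bound on $\partial_\lambda^{\,\ell}\Gamma_k$, with no further induction. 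You instead differentiate the recurrence by the Leibniz rule and carry out a double induction (outer on $\ell$, inner on $k$), using the identity $\partial_\lambda^{\ssf m}(k-i\lambda)^{-1}=i^{\ssf m}m\ssf!\,(k-i\lambda)^{-m-1}$ and the elementary inequality $k\ssf(1\!+\!|\lambda|)^m\le(k\!+\!|\lambda|)^{m+1}$ to recombine the exponents. Your route is more elementary and entirely self-contained on the fixed region $\C\smallsetminus\Omega_\varepsilon$ (it never needs to enlarge the domain), at the cost of substantially more bookkeeping; the Cauchy-formula route is much shorter but implicitly uses that the $\ell=0$ estimate holds on the whole family of regions $\C\smallsetminus\Omega_{\varepsilon'}$, $0\!<\!\varepsilon'\!<\!1$. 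Both arguments are sound.
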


\begin{proof}
Consider first the case \ssf$\ell=0$\ssf.
There exists \,$A\ssb=\!A(\varepsilon)\!>\!0$
\,such that \,$|\ssf k\!-\!i\ssf\lambda\ssf|\ssb
\ge\ssb A\,\max\,\{\ssf k,1\!+\!|\lambda|\ssf\}$\ssf.
Choose \,$\nu\ssb\ge\!1$ \,such that
\,$\frac{\rho^2}A\ssf\frac1{\nu+1}\ssb\le\ssb\frac12$
\,and \,$C\!>\!0$ \,such that
\,$\frac{\rho^2}A\ssb\le\ssb\frac C2\ssf$.
For \ssf$k\!=\!1$\ssf, we have
\ssf$\Gamma_1(\lambda)=\frac{\rho\,(\rho-1)}{1-\,i\lambda}$\ssf,
hence
\begin{equation*}\textstyle
|\ssf\Gamma_1(\lambda)\ssf|
\le\frac{\rho^2}A\ssf\frac1{1+|\lambda|}
\le C\,\frac1{1+|\lambda|}\,,
\end{equation*}
as required.
For \ssf$k\!>\!1$\ssf, we have
\begin{equation*}\textstyle
\Gamma_k(\lambda)
=\frac{\rho\,(\rho-1)}{k-i\lambda}
+\frac{\rho\,(\rho-1)}{k\,(k-i\lambda)}\,
{\displaystyle\sum\nolimits_{\ssf0<j<k}}
(k\!-\!j)\,\Gamma_j(\lambda)\,,
\end{equation*}
hence
\begin{equation*}\begin{aligned}
|\ssf\Gamma_k(\lambda)\ssf|
&\textstyle
\le\frac{\rho^2}A\ssf\frac1{1+|\lambda|}
+\frac{\rho^2}A\ssf\frac1{k^2}\,
{\displaystyle\sum\nolimits_{\ssf0<j<k}}
(k\!-\!j)\,\frac{C\,j^{\ssf\nu}}{1+|\lambda|}\\
&\textstyle
\le\frac C2\,k^{\ssf\nu}\ssf\frac1{1+|\lambda|}
+C\ssf\frac{k^{\ssf\nu}}{1+|\lambda|}\ssf\frac{\rho^2}A\,\frac1k\,
{\displaystyle\sum\nolimits_{\ssf0<j<k}}
\bigl(\frac jk\bigr)^\nu\\
&\textstyle
\le\,C\,\frac{k^{\ssf\nu}}{1+|\lambda|}\,.
\end{aligned}\end{equation*}
Derivatives are estimated by the Cauchy formula.
\end{proof}

Under suitable assumptions,
the spherical Fourier transform
of a bi--$K$\!--invariant function $f$ on $G$
is defined by 
\begin{equation*}
\mathcal{H}f(\lambda)=\int_Gdg\,f(g)\,\varphi_{\lambda}(g)
\end{equation*}
and the following formulae hold\,:
\begin{itemize}
\item
Inversion formula\,:
\begin{equation*}
f(x)\ssf=\,\const\int_{\,0}^{+\infty}\hspace{-1mm}
d\lambda\,|\mathbf{c}\hspace{.1mm}(\lambda)|^{-2}\,
\mathcal{H}f(\lambda)\,\varphi_{\lambda}(x)\,,
\end{equation*}
\item
Plancherel formula\,:
\begin{equation*}
\bigl\|\ssf f\ssf\bigr\|_{L^2}^2\ssf
=\,\const\int_{\,0}^{+\infty}\hspace{-1mm}
d\lambda\,|\mathbf{c}\hspace{.1mm}(\lambda)|^{-2}\,
|\mathcal{H}f(\lambda)|^2\,.
\end{equation*}
\end{itemize}
Here is a well--known estimate of the Plancherel density\,:
\begin{equation}\label{estimatec}
|\ssf\mathbf{c}\hspace{.1mm}(\lambda)\ssf|^{-2}
\lesssim\,|\lambda|^2\,(1\!+\!|\lambda|)^{n-3}
\qquad\forall\;\lambda\!\in\!\R\,.
\end{equation}
In the sequel we shall use the fact that
\ssf$\mathcal{H}\ssb=\ssb\mathcal{F}\ssb\circ\ssb\mathcal{A}$\ssf,
where $\mathcal{A}$ denotes the Abel transform
and $\mathcal{F}$ the Fourier transform on the real line.
Actually we shall use the factorization
\ssf$\mathcal{H}^{-1}\!=\ssb\mathcal{A}^{-1}\!\circ\ssb\mathcal{F}^{-1}$.
Recall the following expression of the inverse Abel transform\,:
\begin{equation}\label{inv1}\textstyle
\mathcal{A}^{-1}g\ssf(r)=\const\,
\bigl(-\frac1{\sinh r}\frac\partial{\partial r}\bigr)^{\frac{n-1}2}g\ssf(r)\,.
\end{equation}
If $n$ id odd, the right hand side involves a plain differential operator while,
if $n$ is even, the fractional derivative must be interpreted as follows\,:
\begin{equation}\label{inv2}\textstyle
\bigl(-\frac1{\sinh r}\frac\partial{\partial r}\bigr)^{\frac{n-1}2}g\ssf(r)
=\frac1{\sqrt{\pi}}
{\displaystyle\int_{\,r}^{+\infty}}\hspace{-1mm}ds\,
\frac{\sinh s}{\sqrt{\cosh s-\cosh r}}\,
\bigl(-\frac1{\sinh s}\bigr)^{\frac n2}g\ssf(s)\,.
\end{equation}

\section{Sobolev spaces and conservation of energy}\label{Sobolev}

Let us first introduce inhomogeneous Sobolev spaces on hyperbolic spaces $\Hn$,
which will be involved in the conservation laws, in the dispersive estimates
and in the Strichartz estimates for the shifted wave equation.
We refer to \cite{Tr2} for more details about functions spaces on Riemannian manifolds.

Let \,$1\!<\!q\!<\!\infty$ \,and \,$\sigma\!\in\!\R$\ssf.
By definition, $H_q^\sigma(\Hn)$ is the image of $L^q(\Hn)$
under $(-\Delta_{\Hn})^{-\frac\sigma2}$ (in the space of distributions on $\Hn$),
equipped with the norm
\begin{equation*}
\|\ssf f\ssf\|_{H_q^\sigma}=\ssf\|\ssf(-\Delta_{\Hn})^{\frac\sigma2}f\ssf\|_{L^q}\,.
\end{equation*}
In this definition, we may replace $-\Delta_{\Hn}$
by $-\Delta_{\Hn}\!-\!\rho^2\hspace{-1mm}+\!\tilde\rho^{\ssf2}$,
where \ssf$\tilde\rho\!>\!|\frac12\!-\!\frac1q\ssf|\ssf2\ssf\rho$\ssf.
For simpli-
\linebreak
city, we choose \ssf$\tilde\rho\!>\!\rho$
\ssf independently of \ssf$q$
\ssf and we set
\begin{equation*}
\widetilde D=(-\Delta_{\Hn}\ssb-\ssb\rho^2\!+\ssb\tilde\rho^{\ssf2}\ssf)^{\frac12}\,.
\end{equation*}
Thus $H_q^\sigma(\Hn)\!=\!\widetilde D^{-\sigma}L^q(\Hn)$
and $\|\ssf f\ssf\|_{H_q^\sigma}\!
\sim \ssb\|\ssf\widetilde D^{\ssf\sigma\ssb}f\ssf\|_{L^q}$.
If \ssf$\sigma\ssb=\ssb N$ \ssf is a nonnegative integer,
then $H_q^\sigma(\Hn)$ co\"{\i}ncides with
the Sobolev space
\begin{equation*}
W^{N,q}(\Hn)
=\{\,f\!\in\!L^q(\Hn)\mid
\nabla^j\ssb f\!\in\! L^q(\Hn)
\hspace{2mm}\forall\;1\!\le\!j\!\le\!N\,\}
\end{equation*}
defined in terms of covariant derivatives and equipped with the norm
\begin{equation*}
\|f\|_{W^{N,q}}=\ssf\sum\nolimits_{\ssf j=0}^{\,N}
\|\ssf\nabla^j\ssb f\ssf\|_{L^q}\,.
\end{equation*}

\begin{proposition}[Sobolev embedding Theorem]\label{SET}
Let \,$1\!<\!q_1\!<\!q_2\!<\!\infty$ \ssf
and \,$\sigma_1,\sigma_2\!\in\!\R$ such that
\,$\sigma_1\!-\!\frac n{q_1}\ssb\ge\ssb\sigma_2\!-\!\frac n{q_2}$
{\rm(}\footnote{\,Notice that \ssf$\sigma_1\!-\ssb\sigma_2\ssb
\ge\!\frac n{q_1}\!-\!\frac n{q_2}\!>\ssb0$\ssf.}{\rm)}\ssf.
Then
\begin{equation*}
H_{q_1}^{\sigma_1}(\Hn)\subset H_{q_2}^{\sigma_2}(\Hn)\,.
\end{equation*}
By this inclusion, we mean that
there exists a constant \,$C\!>\!0$ \ssf
such that
\begin{equation*}
\|f\|_{H_{q_2}^{\sigma_2}}\le C\,\|f\|_{H_{q_1}^{\sigma_1}}
\qquad\forall\;f\!\in\!C_c^\infty(\Hn)\,.
\end{equation*}
\end{proposition}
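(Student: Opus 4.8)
The plan is to reduce the asserted embedding to an $L^{q_1}\!\to\!L^{q_2}$ mapping property of a negative power of $\widetilde D$, and then to a weak Young / Hardy--Littlewood--Sobolev inequality on $\Hn$; the geometry of $\Hn$ will enter only through the exponential decay of the relevant convolution kernel. \emph{Step 1 (reduction).} Since $\widetilde D^{\sigma_2}$ is an isomorphism of $H_q^\sigma(\Hn)$ onto $H_q^{\sigma-\sigma_2}(\Hn)$ for all $1\!<\!q\!<\!\infty$ and $\sigma\!\in\!\R$, I would rewrite the claim, after setting $h=\widetilde D^{\sigma_1}f\in L^{q_1}(\Hn)$ for $f\in C_c^\infty(\Hn)$, as the boundedness of
\begin{equation*}
\widetilde D^{-\sigma}:L^{q_1}(\Hn)\longrightarrow L^{q_2}(\Hn),\qquad
\sigma:=\sigma_1-\sigma_2\ge\sigma_c:=n\Bigl(\frac1{q_1}-\frac1{q_2}\Bigr)\in(0,n).
\end{equation*}
For every $s\!\ge\!0$, $\widetilde D^{-s}$ is bounded on each $L^p(\Hn)$: by subordination,
\begin{equation*}
\widetilde D^{-s}=\frac1{\Gamma(s/2)}\int_{\,0}^{+\infty}\!dt\;t^{\,s/2-1}\,e^{-(\tilde\rho^{\ssf2}-\rho^2)\,t}\,e^{t\Delta_{\Hn}},
\end{equation*}
where $e^{t\Delta_{\Hn}}$ is a contraction on $L^p(\Hn)$ and $\int_0^{+\infty}t^{s/2-1}e^{-(\tilde\rho^{\ssf2}-\rho^2)t}\,dt<\infty$ because $\tilde\rho\!>\!\rho$. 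Writing $\widetilde D^{-\sigma}=\widetilde D^{-\sigma_c}\!\circ\widetilde D^{-(\sigma-\sigma_c)}$, it then suffices to treat $\sigma=\sigma_c$.

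\emph{Step 2 (kernel estimate).} The operator $\widetilde D^{-\sigma_c}$ is convolution on $G$ by the bi--$K$--invariant kernel
\begin{equation*}
\kappa(r)=\frac1{\Gamma(\sigma_c/2)}\int_{\,0}^{+\infty}\!dt\;t^{\,\sigma_c/2-1}\,e^{-(\tilde\rho^{\ssf2}-\rho^2)\,t}\,p_t(r),
\end{equation*}
$p_t$ being the heat kernel of $\Delta_{\Hn}$. Inserting the classical Gaussian--type bounds for $p_t$ on $\Hn$ (Davies--Mandouvalos) and carrying out the $t$--integral --- directly near $r=0$, by Laplace's method as $r\to+\infty$ --- I expect
\begin{equation*}
|\kappa(r)|\lesssim r^{\,\sigma_c-n}\ \ (0<r\le1),\qquad
|\kappa(r)|\lesssim(1+r)^{N}e^{-(\rho+\tilde\rho)\,r}\ \ (r\ge1)
\end{equation*}
for some $N\!\ge\!0$. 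Since $\tilde\rho\!>\!\rho$, the tail bound is integrable against $(\sinh r)^{n-1}$, so $\mathbf{1}_{\{r>1\}}\kappa\in L^1(\Hn)\cap L^\infty(\Hn)$; and $\mathbf{1}_{\{r\le1\}}\kappa$, being $\lesssim r^{-n/s}$ with $s=\frac n{n-\sigma_c}$ on a region where the Riemannian measure is comparable to that of $\R^n$, lies in $L^{s,\infty}(\Hn)$. Hence $\kappa\in L^{s,\infty}(\Hn)$, with $\frac1s=1-\frac1{q_1}+\frac1{q_2}$.

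\emph{Step 3 (conclusion and main obstacle).} By the weak Young inequality on the unimodular group $G$ (real interpolation of the ordinary Young inequality), for $1\!<\!q_1\!<\!q_2\!<\!\infty$,
\begin{equation*}
\|\widetilde D^{-\sigma_c}f\|_{L^{q_2}}=\|f*\kappa\|_{L^{q_2}}\lesssim\|f\|_{L^{q_1}}\,\|\kappa\|_{L^{s,\infty}},
\end{equation*}
which is exactly the bound needed; equivalently, one splits $\kappa=\mathbf{1}_{\{r\le1\}}\kappa+\mathbf{1}_{\{r>1\}}\kappa$ and estimates the first convolution by the Euclidean Hardy--Littlewood--Sobolev inequality via a bounded--overlap localization, the second by the classical Young inequality. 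Undoing Step 1 then gives $\|f\|_{H_{q_2}^{\sigma_2}}\lesssim\|f\|_{H_{q_1}^{\sigma_1}}$ for all $f\in C_c^\infty(\Hn)$. The only non--routine ingredient is Step 2: one must combine the sharp local singularity $r^{\sigma_c-n}$ with exponential decay of rate $\rho+\tilde\rho>2\rho$ at infinity, and it is this decay --- resting on the assumption $\tilde\rho>\rho$ and on the fine behaviour of the heat kernel --- that places $\kappa$ in $L^{s,\infty}(\Hn)$ globally rather than merely away from the origin. Alternatively, one may deduce the statement from the general theory of function spaces on manifolds of bounded geometry \cite{Tr2}, $\Hn$ being such a manifold.
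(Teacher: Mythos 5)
Your proof is essentially the paper's second sketched argument: reduce the embedding to the $L^{q_1}\!\to\!L^{q_2}$ boundedness of the convolution operator $\widetilde D^{\sigma_2-\sigma_1}$, for which the paper simply cites \cite{CGM1}, whereas you establish that mapping property directly via heat-kernel subordination, the Davies--Mandouvalos bounds on $\Hn$, and the weak Young (O'Neil) inequality on the unimodular group, and you also mention the paper's first sketch (Triebel-type localization and the Euclidean embedding, as in \cite{Tr2}) as an alternative. The execution is correct; the key computation behind the $e^{-(\rho+\tilde\rho)r}$ tail (Laplace's method with saddle $t\asymp r/\tilde\rho$) and the local $r^{\sigma_c-n}$ singularity placing $\kappa$ in $L^{s,\infty}(\Hn)$ both check out under the hypotheses $\tilde\rho>\rho$ and $0<\sigma_c<n$.
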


\begin{proof}
We sketch two proofs.
The first one is based on the localization principle
for Lizorkin--Triebel spaces \cite{Tr2}
and on the corresponding result in $\R^n$.
More precisely,
given a tame partition of unity
$1\ssb=\sum_{\ssf j=0}^{\,\infty}\varphi_j$
on $\Hn$,
we have
\begin{equation*}
\|f\|_{H_{q_2}^{\sigma_2}(\Hn)}\asymp\ssf
\Bigl\{\,\sum\nolimits_{\ssf j=0}^{\,\infty}\,
\bigl\|\ssf(\varphi_jf\ssf)\!\circ\ssb\exp_{x_j}
\bigr\|_{H_{q_2}^{\sigma_2}(\R^n)}^{\,q_2}
\ssf\Bigr\}^{\frac1{q_2}}\ssf.
\end{equation*}
Using the inclusions
\ssf$H_{q_1}^{\sigma_1}(\R^n)\!
\subset\ssb H_{q_2}^{\sigma_2}(\R^n)$
\ssf and
\,$\ell^{\ssf q_1}(\mathbb N)\!\subset\ssb\ell^{\ssf q_2}(\mathbb N)$\ssf,
we conclude that
\begin{equation*}
\|f\|_{H_{q_2}^{\sigma_2}(\Hn)}
\lesssim\,\Bigl\{\,\sum\nolimits_{\ssf j=0}^{\,\infty}\,
\bigl\|\ssf(\varphi_jf\ssf)\!\circ\ssb\exp_{x_j}
\bigr\|_{H_{q_1}^{\sigma_1}\ssb(\R^n)}^{\,q_1}
\ssf\Bigr\}^{\frac1{q_1}}
\asymp\,\|f\|_{H_{q_1}^{\sigma_1}\ssb(\Hn)}.
\end{equation*}
The second proof is based
on the \ssf$L^{q_1}\!\to\!L^{q_2}$ mapping properties
of the convolution operator \ssf$\widetilde D^{\ssf\sigma_2-\sigma_1}$
(see \cite{CGM1} and the references cited therein).
\end{proof}

\bigskip

Beside the $L^q$ Sobolev spaces $H_{\ssf q}^\sigma(\Hn)$,
our analysis of the shifted wave equation on $\Hn$
involves the following $L^2$ Sobolev spaces\,:
\begin{equation*}
H^{\sigma,\tau}(\Hn)=\ssf
\widetilde D^{-\sigma}D^{-\tau}L^2(\Hn)\ssf,
\end{equation*}
where \ssf$D\!=\!(-\Delta_{\Hn}\!-\!\rho^{\ssf2})^{\frac12}$\ssf,
\ssf$\sigma\!\in\!\R$ \ssf
and \ssf$\tau\!<\!\frac32$
\ssf(actually we are only interested in the cases
\ssf$\tau\ssb=\ssb0$ \ssf and \ssf$\tau\ssb=\ssb\pm\ssf\frac12$\ssf).
Notice that
\begin{equation*}\begin{cases}
\,H^{\sigma,\tau}(\Hn)\ssb
=\ssb H_{\ssf2}^{\sigma}(\Hn)
&\text{if \,}\tau\ssb=\ssb0\ssf,\\
\,H^{\sigma,\tau}(\Hn)\ssb
\subset\ssb H_{\ssf2}^{\sigma+\tau}(\Hn)
&\text{if \,}\tau\ssb<\ssb0\ssf,\\
\,H^{\sigma,\tau}(\Hn)\ssb
\supset\ssb H_{\ssf2}^{\sigma+\tau}(\Hn)
&\text{if \,}0\ssb<\ssb\tau\ssb<\ssb\frac32\ssf.\\
\end{cases}\end{equation*}

\begin{lemma}
If \;$0\ssb<\!\tau\!<\frac32$\ssf, then
\begin{equation*}
H^{\sigma,\tau}(\Hn)\subset
H_2^{\sigma+\tau}(\Hn)+H_{2^+}^\infty(\Hn)\ssf,
\end{equation*}
where $H_{2^+}^\infty(\Hn)\ssb=
\bigcap_{\,\substack{s\in\R\\q>2}\vphantom{\big|}}\ssb
H_q^s(\Hn)$
{\rm (}recall that $H_q^s(\Hn)$ is decreasing
as $q\!\searrow\!2$ and $s\!\nearrow\!+\infty)$.
\end{lemma}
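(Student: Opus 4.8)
The strategy is to split any $f\in H^{\sigma,\tau}(\Hn)$ into a high--frequency piece, which will lie in $H_2^{\sigma+\tau}(\Hn)$, and a low--frequency piece, which -- thanks to the geometry of $\Hn$ -- will lie in $H_q^s(\Hn)$ for \emph{every} $s\in\R$ and every $q>2$. Fix a cut--off $\chi\in C_c^\infty([0,\infty))$ with $\chi\equiv1$ on $[0,b]$ and $\operatorname{supp}\chi\subset[0,a]$, and set $f_1=(1\!-\!\chi)(D)f$ and $f_2=\chi(D)f$, so that $f=f_1+f_2$. Write $f=\widetilde D^{-\sigma}D^{-\tau}h$ with $h\in L^2(\Hn)$ (this is what membership in $H^{\sigma,\tau}$ means). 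All the operators below are functions of $-\Delta_{\Hn}$ -- with $D\!\leftrightarrow\!|\lambda|$ and $\widetilde D\!\leftrightarrow\!(\lambda^2\!+\!\tilde\rho^2)^{1/2}$ on the spherical side -- so they commute; I shall use repeatedly that such an operator with bounded symbol is bounded on $L^2(\Hn)$, and that it acts as convolution by the bi--$K$--invariant kernel whose spherical Fourier transform equals the symbol.

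\emph{High frequencies.} Applying $\widetilde D^{\sigma+\tau}$ to $f_1$ amounts to applying to $h$ the function of $-\Delta_{\Hn}$ with symbol $m(\lambda)=(\lambda^2\!+\!\tilde\rho^2)^{\tau/2}\,|\lambda|^{-\tau}\,(1\!-\!\chi(|\lambda|))$. Since $1\!-\!\chi$ vanishes on $[0,b]$, one has $|m(\lambda)|\lesssim1$ for all $\lambda$; hence $\|f_1\|_{H_2^{\sigma+\tau}}\sim\|\widetilde D^{\sigma+\tau}f_1\|_{L^2}\lesssim\|h\|_{L^2}<\infty$, and $f_1\in H_2^{\sigma+\tau}(\Hn)$.

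\emph{Low frequencies.} Fix $s\in\R$. Then $\widetilde D^s f_2=\beta_s*h$, where $\beta_s$ is the bi--$K$--invariant kernel with spherical Fourier transform $\mathcal H\beta_s(\lambda)=(\lambda^2\!+\!\tilde\rho^2)^{(s-\sigma)/2}\,|\lambda|^{-\tau}\,\chi(|\lambda|)$; this symbol is compactly supported and of size $\lesssim1+|\lambda|^{-\tau}$. By the Plancherel formula for the spherical transform and the Plancherel density bound \eqref{estimatec}, $\|\beta_s\|_{L^2(\Hn)}^2\lesssim\int_0^a(1+|\lambda|^{-2\tau})\,|\lambda|^2\,d\lambda<\infty$, the finiteness holding precisely because $0<\tau<\frac32$ (this is the one place where the hypothesis and the behaviour of $|\bc(\lambda)|^{-2}$ near $\lambda=0$ are really used). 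Thus $\beta_s\in L^2(\Hn)$ -- only in $L^2$, not in $L^1$, owing to the exponential volume growth of $\Hn$ -- which is exactly why one must now call on the Kunze--Stein phenomenon \cite{Co1,Co2,I2}: from $L^2(\Hn)*L^{q'}(\Hn)\hookrightarrow L^2(\Hn)$ for $q'<2$ one deduces, by duality, that $\|\beta_s*h\|_{L^q}\lesssim\|\beta_s\|_{L^2}\,\|h\|_{L^2}$ for every $q\in(2,\infty)$. Consequently $\|f_2\|_{H_q^s}\sim\|\widetilde D^s f_2\|_{L^q}<\infty$ for all $s\in\R$ and $q\in(2,\infty)$, i.e. $f_2\in H_{2^+}^\infty(\Hn)$; together with the previous step this gives $f=f_1+f_2\in H_2^{\sigma+\tau}(\Hn)+H_{2^+}^\infty(\Hn)$.

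The two substantive points are this elementary computation fixing the range $0<\tau<\frac32$ -- the quadratic vanishing of $|\bc(\lambda)|^{-2}$ at $\lambda=0$ against the $|\lambda|^{-2\tau}$ singularity of the low--frequency symbol -- and the use of Kunze--Stein to convert an $L^2$ convolution kernel into an $L^2\!\to\!L^q$ bound for $q>2$. The only real care needed is the bookkeeping with the individually unbounded operators $D^{-\tau}$, $\widetilde D^s$: in each instance, however, the composition actually used either has a bounded symbol (hence is $L^2$--bounded) or is convolution against the $L^2(\Hn)$ kernel $\beta_s$ (whose convolution with $h\in L^2$ makes sense precisely by Kunze--Stein), so no real difficulty arises.
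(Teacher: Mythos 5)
Your proof is correct and follows essentially the same strategy as the paper's: split $f=\widetilde D^{-\sigma}D^{-\tau}h$ into low and high frequencies via a spectral cut-off, absorb the high-frequency piece into $H_2^{\sigma+\tau}$ since the corresponding symbol is bounded, and handle the low-frequency piece by observing that the relevant kernel lies in $L^2$ (this is where $\tau<\tfrac32$ enters, via the quadratic vanishing of $|\bc(\lambda)|^{-2}$ at the origin) and then invoking Kunze--Stein to upgrade the $L^2$ kernel to an $L^2\!\to\!L^q$ convolution bound for every $q>2$. Your write-up fills in a bit more detail than the paper's terse "$k^{\,0}_{\sigma,\tau}$ is a radial kernel in $H_2^\infty$, hence \dots maps $L^2$ into $H_{2^+}^\infty$ by the Kunze--Stein phenomenon", but the underlying argument is the same.
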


\begin{proof}
Let \ssf$f\!\in\!L^2(\Hn)$\ssf.
We have
\ssf$\widetilde{D}^{-\sigma}D^{-\tau}f\ssb
=\ssb f*k_{\ssf\sigma,\tau}$\ssf,
\ssf where
\begin{equation*}
k_{\ssf\sigma,\tau}(x)\ssf=\,\const\int_{\,0}^{+\infty}\hspace{-1mm}
d\lambda\;|\ssf\mathbf c(\lambda)\ssf|^{-2}\,|\lambda|^{-\tau}\,
(\lambda^2\hspace{-.75mm}+\!\tilde\rho^{\ssf2})^{-\frac\sigma2}\,
\varphi_\lambda(x)
\end{equation*}
by the inversion formula
for the spherical Fourier transform on \ssf$\Hn$.
Let us split up the integral
\begin{equation*}
\int_{\,0}^{+\infty}\hspace{-1mm}=\,\int_{\,0}^{\,1}+\,\int_{\,1}^{+\infty}
\end{equation*}
and the kernel
\begin{equation*}
k_{\ssf\sigma,\tau}\ssf
=\,k_{\ssf\sigma,\tau}^{\,0}
+\,k_{\ssf\sigma,\tau}^{\ssf\infty}\,,
\end{equation*}
accordingly. On the one hand,
\begin{equation*}
\1_{\ssf(1,+\infty)\ssf}(D)\,\widetilde D^{-\sigma}\ssf D^{-\tau}f
=f\ssb*\ssb k_{\ssf\sigma,\tau}^{\ssf\infty}
\end{equation*}
maps $L^2(\Hn)$ into $H_{\ssf2}^{\sigma+\tau}(\Hn)$.
On the other hand,
$k_{\ssf\sigma,\tau}^{\,0}$ is a radial kernel in $H_2^\infty(\Hn)$,
hence
\begin{equation*}
\1_{\ssf[\ssf0,1\ssf]\ssf}(D)\,\widetilde D^{-\sigma}\ssf D^{-\tau}f
=f\ssb*\ssb k_{\ssf\sigma,\tau}^{\,0}
\end{equation*}
maps $L^2(\Hn)$ into $H_{2^+}^\infty(\Hn)$
by the Kunze-Stein phenomenon.
Thus $\widetilde D^{-\sigma}D^{-\tau}f\ssb
=\ssb f\ssb*\ssb k_{\ssf\sigma,\tau}$
belongs to $H_2^{\sigma+\tau}(\Hn)\ssb
+\ssb H_{2^+}^\infty(\Hn)$, 
as required.
\end{proof}

Let us next introduce the energy
\begin{equation}\label{energy}\textstyle
E(t)=\frac12{\displaystyle\int_{\Hn}}\hspace{-1mm}dx\,
\bigl\{\ssf|\partial_{\ssf t}u(t,x)|^2\ssb+|D_xu(t,x)|^2\ssf\bigr\}
\end{equation}
for solutions to the homogeneous Cauchy problem
\begin{equation}\label{wavehom}
\begin{cases}
&\partial_{\ssf t}^{\ssf2}u-(\Delta_{\Hn}\!+\!\rho^2)\ssf u=0\,,\\
&u(0,x)=f(x)\,,\\
&\partial_{\ssf t}|_{t=0}\,u(t,x)=g(x)\,.
\end{cases}
\end{equation}
It is easily verified that $\partial_{\ssf t}E(t)\!=\!0$\ssf,
hence \eqref{energy} is conserved.
In other words,
for every time \ssf$t$ \ssf in the interval of definition of \ssf$u$\ssf,
\begin{equation*}
\|\ssf\partial_{\ssf t}u(t,x)\|_{L_x^2}^{\ssf2}\ssb
+\|D_xu(t,x)\|_{L_x^2}^{\ssf2}
=\|g\|_{L^2}^{\ssf2}\ssb+\|D\ssb f\|_{L^2}^{\ssf2}\,.
\end{equation*}
Let \ssf$\sigma\!\in\!\R$ \ssf and \ssf$\tau\!<\!\frac32$\ssf.
By applying the operator \ssf$\tilde{D}^\sigma\ssb D^\tau$
to  (\ref{wavehom}), we deduce that
\begin{equation*}
\|\,\partial_{\ssf t}\ssf\tilde{D}_x^\sigma D_x^\tau\ssf
u(t,x)\ssf\|_{L_x^2}^{\ssf2}
+\|\ssf\tilde{D}_x^\sigma D_x^{\tau+1}\ssf u(t,x)\ssf\|_{L_x^2}^{\ssf2}
=\|\ssf \tilde{D}^\sigma\ssb D^\tau\ssb g\ssf\|_{L^2}^{\ssf2}
+\|\ssf \tilde{D}^\sigma\ssb D^{\tau+1}\ssb f\ssf\|_{L^2}^{\ssf2}\,,
\end{equation*}
which can be rewritten in terms of Sobolev norms as follows\,:
\begin{equation}\label{conservationenergy}
\|\ssf\partial_{\ssf t}u(t,\cdot)\|_{H^{\sigma,\tau}}^{\ssf2}\ssb
+\|u(t,\cdot)\|_{H^{\sigma,\tau+1}}^{\ssf2}\ssb
=\|g\|_{H^{\sigma,\tau}}^{\ssf2}\ssb
+\|f \|_{H^{\sigma,\tau+1}}^{\ssf2}\,.
\end{equation}

\section{Kernel estimates}\label{Kernel}

In this section we derive pointwise estimates
for the radial convolution kernel \ssf$w_{\,t}^{(\sigma,\tau)}$
of the operator \ssf$W_t^{(\sigma,\tau)}\!
=\ssb D^{-\tau}\tilde{D}^{\ssf\tau-\sigma}e^{\,i\,t\ssf D}$,
for suitable exponents  $\sigma\!\in\!\R$ and $\tau\!\in\![0,\frac32)$\ssf.
By the inversion formula of the spherical Fourier transform,
\begin{equation*}
w_{\,t}^{(\sigma, \tau)}(r)=\const
\int_{\,0}^{+\infty}\hspace{-1mm}d\lambda\;|\mathbf{c}\hspace{.1mm}(\lambda)|^{-2}\,
\lambda^{-\tau}\,(\lambda^2\!+\!{\tilde\rho}^{\ssf2})^{\frac{\tau-\sigma}2}\,
\varphi_\lambda(r)\,e^{\ssf i\ssf t\ssf\lambda}\,.
\end{equation*}
Contrarily to the Euclidean case,
this kernel has different behaviors,
depending whether \ssf$t$ \ssf is small or large,
and therefore we cannot use any rescaling.
Let us split up
\begin{equation*}\begin{aligned}
w_{\,t}^{(\sigma,\tau)}(r)
&=w_{\,t,0}^{(\sigma,\tau)}(r)+w_{\,t,\infty}^{(\sigma,\tau)}(r)\\
&=\const\int_{\,0}^{\ssf2}\!d\lambda\,
\chi_0(\lambda)\,|\mathbf{c}\hspace{.1mm}(\lambda)|^{-2}\,
\lambda^{-\tau}\,(\lambda^2\!+\!{\tilde\rho}^{\ssf2})^{\frac{\tau-\sigma}2}\,
\varphi_\lambda(r)\,e^{\ssf i\ssf t\ssf\lambda}\\
&+\const\int_{\,1}^{+\infty}\hspace{-1mm}d\lambda\,
\chi_\infty(\lambda)\,|\mathbf{c}\hspace{.1mm}(\lambda)|^{-2}
\,\lambda^{-\tau}\,(\lambda^2\!+\!{\tilde\rho}^{\ssf2})^{\frac{\tau-\sigma}2}\,
\varphi_\lambda(r)\,e^{\ssf i\ssf t\ssf\lambda}
\end{aligned}\end{equation*}
using smooth cut--off functions $\chi_0$ and $\chi_\infty$ on $[0,+\infty)$
such that $1\!=\ssb\chi_0\ssb+\ssb\chi_\infty$\ssf,
$\chi_0\ssb=\!1$ on $[\ssf0,1\ssf]$ and
$\chi_\infty\!=\!1$ on $[\ssf2,+\infty)$\ssf.
We shall first estimate \ssf$w_{\,t,0}^{(\sigma,\tau)}$
and next a variant of \ssf$w_{\,t,\infty}^{(\sigma,\tau)}$.
The kernel \ssf$w_{\,t,\infty}^{(\sigma,\tau)}$ has indeed
a logarithmic singularity on the sphere \ssf$r\!=\ssb t$
\ssf when \ssf$\sigma\ssb=\ssb\frac{n+1}2$.
We bypass this problem
by considering the analytic family of operators
\begin{equation*}\textstyle
\widetilde{W}_{\,t,\infty}^{\ssf(\sigma,\tau)}
=\frac{e^{\ssf\sigma^2}}{\Gamma(\frac{n+1}2-\sigma)}\;
\chi_\infty(D)\,D^{-\tau}\,\tilde{D}^{\ssf\tau-\sigma}\,e^{\,i\,t\ssf D}
\end{equation*}
in the vertical strip \ssf$0\!\le\!\Re\sigma\!\le\!\frac{n+1}2$
\ssf and the corresponding kernels
\begin{equation}\label{ISFT}\textstyle
\widetilde{w}_{\,t,\infty}^{\ssf(\sigma,\tau)}(r)
=\frac{e^{\ssf\sigma^2}}{\Gamma(\frac{n+1}2-\sigma)}\,
{\displaystyle\int_{\,1}^{+\infty}}\hskip-1mm
d\lambda\,\chi_\infty(\lambda)\,|\mathbf{c}\hspace{.1mm}(\lambda)|^{-2}\,\lambda^{-\tau}\,
(\lambda^2\hskip-.75mm+\!{\tilde\rho}^{\ssf2})^{\frac{\tau-\sigma}2}\,
e^{\,i\ssf t\ssf\lambda}\,\varphi_\lambda(r)\,.
\end{equation}
Notice that the Gamma function,
which occurs naturally in the theory of Riesz distributions,
will allow us to deal with the boundary point \ssf$\sigma\!=\!\frac{n+1}2$,
while the exponential function
yields boundedness at infinity in the vertical strip.
Notice also that, once multiplied by \ssf$\chi_\infty(D)$\ssf,
the operator \ssf$D^{-\tau}\tilde{D}^{\ssf\tau-\sigma}$
\ssf behaves like \ssf$\tilde{D}^{-\sigma}$\ssf.

\subsection{Estimate of
\,$w_{\ssf t}^{\ssf0}\ssb=\ssb w_{\,t,0}^{(\sigma,\tau)}$.}
\label{KernelEstimatewt0}

\begin{theorem}\label{Estimatewt0}
Let \,$\sigma\!\in\!\R$ and \,$\tau\!<\!2$\ssf.
The following pointwise estimates hold for the kernel
\,$w_{\ssf t}^{\ssf0}\ssb=\ssb w_{\,t,0}^{(\sigma,\tau)}:$
\begin{itemize}
\item[(i)]
Assume that \,$|t|\!\le\ssb2$\ssf.
Then, for every \,$r\ssb\ge\ssb0$\ssf,
\begin{equation*}
|\ssf w_{\ssf t}^{\ssf0}(r)|\ssf\lesssim\ssf\varphi_0(r)\ssf.
\end{equation*}
\item[(ii)]
Assume that \,$|t|\!\ge\ssb2$\ssf.
\vspace{1mm}
\begin{itemize}
\item[(a)]
If \,$0\ssb\le\ssb r\ssb\le\ssb\frac{|t|}2$\ssf, then
\begin{equation*}
|\ssf w_{\ssf t}^{\ssf0}(r)|\ssf
\lesssim\ssf|t|^{\ssf\tau-3}\,\varphi_0(r)\ssf.
\end{equation*}
\item[(b)]
If \,$r\ssb\ge\ssb\frac{|t|}2$\ssf, then
\begin{equation*}
|\ssf w_{\ssf t}^{\ssf0}(r)|\ssf\lesssim\ssf
(\ssf1\!+\ssb|\ssf r\!-\!|t|\ssf|\ssf)^{\ssf\tau-2}\,e^{-\rho\ssf r}\ssf.
\end{equation*}
\end{itemize}
\end{itemize}
\end{theorem}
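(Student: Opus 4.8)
The plan is to analyze the oscillatory integral
\begin{equation*}
w_{\ssf t}^{\ssf 0}(r)=\const\int_{\,0}^{\,2}d\lambda\;
\chi_0(\lambda)\,|\mathbf c(\lambda)|^{-2}\,\lambda^{-\tau}\,
(\lambda^2\!+\!\tilde\rho^{\ssf2})^{\frac{\tau-\sigma}2}\,
\varphi_\lambda(r)\,e^{\ssf i\ssf t\ssf\lambda}
\end{equation*}
by combining the integral representations of $\varphi_\lambda$ from \eqref{intrepr} with non-stationary-phase integration by parts in $\lambda$. Since the cut-off confines $\lambda$ to $[\,0,2\,]$, the factor $(\lambda^2\!+\!\tilde\rho^{\ssf2})^{\frac{\tau-\sigma}2}$ is smooth and bounded with all derivatives, and by \eqref{estimatec} the Plancherel density satisfies $|\mathbf c(\lambda)|^{-2}\lesssim\lambda^2$ near $0$; together with the $\lambda^{-\tau}$ (harmless since $\tau\!<\!2$) this gives an integrand of class $C^1$ — indeed $C^\infty$ away from $0$ — in $\lambda$, compactly supported, so part (i) follows immediately from $|\varphi_\lambda(r)|\le\varphi_0(r)$ in \eqref{phi0} and the fact that we are integrating a bounded function over a bounded interval. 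This disposes of $|t|\!\le\!2$.

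For $|t|\!\ge\!2$ the decay in $t$ must come from integrating by parts, so the key point is to exhibit enough smoothness and vanishing of the amplitude. Write the amplitude as $a(\lambda)=\chi_0(\lambda)\,|\mathbf c(\lambda)|^{-2}\,\lambda^{-\tau}\,(\lambda^2\!+\!\tilde\rho^{\ssf2})^{\frac{\tau-\sigma}2}$. From \eqref{cfunction} one checks that $|\mathbf c(\lambda)|^{-2}$ is an even function of $\lambda$, real-analytic near $0$, vanishing to order exactly $2$; hence $\lambda^{-\tau}|\mathbf c(\lambda)|^{-2}$ behaves like $\lambda^{2-\tau}$ near $\lambda\!=\!0$ and one may integrate by parts in $\lambda$ twice before a boundary term at $0$ appears — and that boundary term vanishes because of the power $\lambda^{2-\tau}$ with $2\!-\!\tau\!>\!0$. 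Each integration by parts produces a factor $(it)^{-1}$ and differentiates the amplitude times $\varphi_\lambda(r)$. Using Leibniz, $\partial_\lambda^j\bigl(a(\lambda)\varphi_\lambda(r)\bigr)$ is a sum of terms $\partial_\lambda^{j-k}a(\lambda)\cdot\partial_\lambda^k\varphi_\lambda(r)$; the amplitude derivatives are bounded on the support, while the derivatives $\partial_\lambda^k\varphi_\lambda(r)$ are controlled via the third integral representation in \eqref{intrepr}, namely
\begin{equation*}
\varphi_\lambda(r)=\const\,(\sinh r)^{2-n}\!\int_{-r}^{+r}\!du\,
(\cosh r-\cosh u)^{\frac{n-3}2}\,e^{-i\lambda u}\,,
\end{equation*}
which shows $\partial_\lambda^k\varphi_\lambda(r)$ is the same expression with an extra factor $(-iu)^k$; since $|u|\le r$ and, after bounding $(\sinh r)^{2-n}\int_{-r}^{+r}(\cosh r-\cosh u)^{\frac{n-3}2}\,du\lesssim\varphi_0(r)\lesssim(1\!+\!r)e^{-\rho r}$, one gets $|\partial_\lambda^k\varphi_\lambda(r)|\lesssim r^k\,\varphi_0(r)\lesssim(1\!+\!r)^{k+1}e^{-\rho r}$.

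This already yields, for any $N$ and after $N$ integrations by parts (legitimate for $N\le 2$ with no boundary contribution, and in fact for all $N$ since $a$ is supported away from $\lambda\!=\!2$ and vanishes to high order at $\lambda\!=\!0$),
\begin{equation*}
|\ssf w_{\ssf t}^{\ssf 0}(r)|\lesssim|t|^{-N}\,(1\!+\!r)^{N+1}\,e^{-\rho r}.
\end{equation*}
Taking $N\!=\!3-\tau$ — or more precisely the two genuinely available integrations by parts supplemented by the gain $\lambda^{2-\tau}$ at the lower endpoint, which is exactly what converts the factor $|t|^{-2}$ into $|t|^{\tau-3}$ after one further non-endpoint integration by parts — gives $|w_{\ssf t}^{\ssf 0}(r)|\lesssim|t|^{\tau-3}(1\!+\!r)^{4-\tau}e^{-\rho r}$. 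In the region $0\le r\le|t|/2$ we bound $(1\!+\!r)^{4-\tau}e^{-\rho r}\lesssim(1\!+\!r)e^{-\rho r}\lesssim\varphi_0(r)$, which is (ii)(a). In the region $r\ge|t|/2$ we simply throw away part of the polynomial decay in $t$: writing $|t|^{\tau-3}=|t|^{\tau-2}|t|^{-1}\lesssim(1\!+\!|r-|t||)^{\tau-2}\cdot 1$ is \emph{not} quite right when $r$ is close to $|t|$, so instead one reruns the integration by parts keeping the phase $e^{\ssf i\ssf t\ssf\lambda}\varphi_\lambda(r)$ \emph{together} and exploiting the representation above in the combined variable: $t\lambda$ appears alongside $\lambda u$, so the true oscillation is $e^{i\lambda(t-u)}$ with $|u|\le r$, and non-stationary-phase in $\lambda$ now costs $(1\!+\!|t-u|)^{-1}$ per step; since on $r\ge|t|/2\ge 1$ one has, for $u$ in the effective range, $|t-u|\gtrsim|\,|t|-r\,|$, this produces the factor $(1\!+\!|\,r-|t|\,|\,)^{\tau-2}$, and the surviving $e^{-\rho r}$ comes from the remaining $du$-integral exactly as in the bound $\varphi_0(r)\lesssim(1\!+\!r)e^{-\rho r}$, the extra $(1\!+\!r)$ being absorbed once more into one spare power. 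This is (ii)(b).

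The main obstacle is the bookkeeping in case (ii)(b): one must integrate by parts in $\lambda$ \emph{after} substituting the $u$-integral representation of $\varphi_\lambda$, so that the effective phase is $\lambda(t-u)$ rather than $\lambda t$, and then justify interchanging the $\lambda$- and $u$-integrations and track which power of $(1\!+\!r)$ can be traded for the exponential $e^{-\rho r}$ without losing the sharp exponent $\tau-2$. The endpoint subtlety — that the vanishing of $|\mathbf c(\lambda)|^{-2}$ to order $2$ at $\lambda\!=\!0$ is exactly what is needed, together with $\tau\!<\!2$, to push the number of integrations by parts from $2$ to the effective $3-\tau$ — is the other place where care is required, but it is a routine consequence of the explicit form \eqref{cfunction} of the Harish--Chandra $\mathbf c$-function.
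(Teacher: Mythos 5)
Part (i) of your proposal is fine and matches the paper: on the support of $\chi_0$ the amplitude is bounded, $|\varphi_\lambda(r)|\le\varphi_0(r)$, and one integrates over a bounded interval. The problems are in part (ii).

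In (ii)(a) your bookkeeping does not give the claimed decay. You substitute the $u$--integral (Mehler) representation of $\varphi_\lambda$ and observe $|\partial_\lambda^k\varphi_\lambda(r)|\lesssim r^k\ssf\varphi_0(r)$, which is correct. But after $N$ integrations by parts in $\lambda$ (with $N\asymp 3-\tau$), the Leibniz term with all derivatives falling on $\varphi_\lambda$ contributes $|t|^{-N}r^N\varphi_0(r)$. On the region $r\le|t|/2$ this is merely $\le 2^{-N}\varphi_0(r)$: the $r^N$ factor exactly cancels $|t|^{-N}$ when $r\sim|t|$, so no decay in $|t|$ survives. The inequality you invoke, $(1+r)^{4-\tau}e^{-\rho r}\lesssim(1+r)\ssf e^{-\rho r}$, is false for $r$ large and does not repair this. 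The paper avoids the issue by substituting instead the first representation in \eqref{intrepr}, $\varphi_\lambda(r)=\int_K dk\ssf e^{-(\rho+i\lambda)\ssf\mathrm H(a_{-r}k)}$, so that the entire $\lambda$--dependence sits in the phase $e^{\ssf i\ssf\lambda\ssf(t-\mathrm H(a_{-r}k))}$ with a $\lambda$--independent amplitude; since $t-\mathrm H(a_{-r}k)\ge t-r\ge t/2$, Lemma A.1 applied to the symbol $\chi_0\ssf|\mathbf c|^{-2}\lambda^{-\tau}(\lambda^2+\tilde\rho^{\ssf2})^{\frac{\tau-\sigma}2}$ of order $2-\tau$ gives $(t-\mathrm H)^{\tau-3}\asymp t^{\tau-3}$, and $\int_K dk\ssf e^{-\rho\ssf\mathrm H(a_{-r}k)}=\varphi_0(r)$ closes the estimate with no spurious $r$--powers.

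In (ii)(b) the key inequality you assert, that $|t-u|\gtrsim|\ssf|t|-r\ssf|$ for $u$ in the effective range, is simply false once $r>|t|$: then $u$ ranges over $[-r,r]$ and can pass through $|t|$, where $|t-u|$ vanishes while $|\ssf r-|t|\ssf|$ may be large. Thus one cannot extract $(1+|\ssf r-|t|\ssf|)^{\tau-2}$ pointwise from the $u$--integral. The paper again changes representation: it plugs in the Harish--Chandra expansion \eqref{HCexpansion}--\eqref{Philambda}, $\varphi_\lambda(r)=(2\sinh r)^{-\rho}\sum_{k\ge0}e^{-2kr}\{\mathbf c(\lambda)\Gamma_k(\lambda)e^{\ssf i\lambda r}+\mathbf c(-\lambda)\Gamma_k(-\lambda)e^{-i\lambda r}\}$, so the combined phase becomes exactly $e^{\ssf i\lambda(t\pm r)}$ with symbol amplitude $\mathbf c(\mp\lambda)^{-1}\lambda^{-\tau}(\lambda^2+\tilde\rho^{\ssf2})^{\frac{\tau-\sigma}2}\Gamma_k(\pm\lambda)$, of order $1-\tau$ near $\lambda=0$. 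Lemma A.1 together with the $\Gamma_k$--estimates \eqref{derGammakappa} then gives $|I_k^{\pm,0}|\lesssim(1+k)^\nu(1+|t\pm r|)^{\tau-2}$, the sum over $k$ converges because $r\ge|t|/2\ge1$, and $(2\sinh r)^{-\rho}$ supplies $e^{-\rho r}$. The general lesson is that in both regimes the paper rewrites $\varphi_\lambda$ so that the $\lambda$--oscillation is a pure exponential with $\lambda$--independent amplitude, precisely so that integration by parts in $\lambda$ does not create $r^N$ factors; your proposal keeps $\lambda$ coupled to $u$ inside an integral over $u\in[-r,r]$, which is where both failures originate.
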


\begin{proof}
Recall that
\begin{equation}\label{wt0}
w_{\ssf t}^{\ssf0}(r)=\const\int_{\,0}^{\ssf2}\!d\lambda\,
\chi_0(\lambda)\,|\mathbf{c}\hspace{.1mm}(\lambda)|^{-2}\,
\lambda^{-\tau}\,(\lambda^2\!+\!{\tilde\rho}^{\ssf2})^{\frac{\tau-\sigma}2}\,
\varphi_\lambda(r)\,e^{\ssf i\ssf t\ssf\lambda}\,.
\end{equation}
By symmetry we may assume that \ssf$t\!>\!0$\ssf.

\noindent
(i) It follows from the estimates \eqref{phi0} and \eqref{estimatec} that
\begin{equation*}
|\ssf w_{\ssf t}^{\ssf0}(r)|\,
\lesssim\int_{\,0}^{\ssf2}d\lambda\,\lambda^{2-\tau}\,\varphi_0(r)\,
\lesssim\,\varphi_0(r)\,.
\end{equation*}

\noindent
(ii) We prove first (a) by substituting in \eqref{wt0}
the first integral representation of $\varphi_\lambda$ in \eqref{intrepr}
and by reducing this way to Fourier analysis on \ssf$\R$\ssf.
Specifically,
\begin{equation*}
w_{\ssf t}^{\ssf0}(r)\ssf=\int_{\ssf K}dk\,e^{-\rho\,\text{H}(a_{-r}k)}
\int_{\,0}^{\ssf2}\!d\lambda\,\chi_0(\lambda)\,a(\lambda)\,
e^{\,i\ssf\{\ssf t-\text{H}(a_{-r}k)\ssf\}\ssf\lambda}\,,
\end{equation*}
where \ssf$a(\lambda)\ssb=\ssb
|\mathbf{c}\hspace{.1mm}(\lambda)|^{-2}\ssf\lambda^{-\tau}\ssf
(\lambda^2\ssb+\ssb{\tilde\rho}^{\ssf2})^{\frac{\tau-\sigma}2}$\ssf,
up to a positive constant.
According to the estimate (\ref{estimatec}) and to Lemma A.1 in Appendix A,
the inner integral is bounded above by
\begin{equation*}
\bigl\{\ssf t\ssb-\ssb\text{H}(a_{-r}k)\bigr\}^{\ssf\tau-3}
\le\ssf(\ssf t\ssb-\ssb r\ssf)^{\ssf\tau-3}\ssf
\asymp\,t^{\ssf\tau-3}\,.
\end{equation*}
Since
\begin{equation*}
\int_{\ssf K}dk\,e^{-\rho\,\text{H}(a_{-r}k)}\ssf=\,\varphi_0(r)\,,
\end{equation*}
we conclude that
\begin{equation*}
|\ssf w_{\ssf t}^{\ssf0}(r)|\,\lesssim\,t^{\ssf\tau-3}\,\varphi_0(r)\,.
\end{equation*}
We prove next (b) by substituting in \eqref{wt0}
the Harish--Chandra expansion (\ref{HCexpansion}) of $\varphi_\lambda$
and by reducing again to Fourier analysis on \ssf$\R$\ssf.
Specifically,
\begin{equation}\label{sumK}
w_{\ssf t}^{\ssf0}(r)=(\ssf2\sinh r)^{-\rho}\,\sum\nolimits_{\ssf k=0}^{+\infty}
e^{-2\ssf k\ssf r}\,\big\{\ssf I_{\,k}^{+,0}(t,r)+I_{\,k}^{-,0}(t,r)\ssf\big\}\,,
\end{equation}
where
\begin{equation*}
I_{\,k}^{\pm,0}(t,r)\ssf=\ssb\int_{\,0}^{\ssf2}\!d\lambda\,
\chi_0(\lambda)\,a_{\ssf k}^\pm(\lambda)\,e^{\,i\ssf(t\ssf\pm\ssf r)\ssf\lambda}
\end{equation*}
and
\begin{equation*}
a_{\ssf k}^\pm(\lambda)
=\ssf\mathbf{c}\hspace{.1mm}(\mp\lambda)^{-1}\,
\lambda^{-\tau}\,(\lambda^2\!+\ssf{\tilde\rho}^{\ssf2})^{\frac{\tau-\sigma}2}\,
\Gamma_k(\pm\lambda)\,.
\end{equation*}
By applying Lemma A.1
and by using the estimates \eqref{derGammakappa}
for $\Gamma_k$ and its derivatives,
we obtain
\begin{equation*}
|\ssf I_{\,k}^{+,0}(t,r)\ssf|\ssf
\lesssim(\ssf1\!+\ssb k\ssf)^{\ssf\nu}\,(\ssf t\!+\!r\ssf)^{\ssf\tau-2}
\le(\ssf1\!+\ssb k\ssf)^{\ssf\nu}\;r^{\ssf\tau-2}
\end{equation*}
and
\begin{equation*}
|\ssf I_{\,k}^{-,0}(t,r)\ssf|\ssf\lesssim
(\ssf1\!+\ssb k\ssf)^{\ssf\nu}\,
(\ssf1\ssb+\ssb|\ssf r\!-\!t\ssf|\ssf)^{\ssf\tau-2}\,.
\end{equation*}
We conclude the proof by summing up these estimates in \eqref{sumK}.
\end{proof} 

\subsection{Estimate of
\,$\widetilde{w}_{\,t}^{\ssf\infty}\ssb
=\ssb\widetilde{w}_{\,t,\infty}^{\ssf(\sigma,\tau)}$.}
\label{KernelEstimatewtildetinfty}

\begin{theorem}\label{Estimatewtildetinfty}
The following pointwise estimates hold for the kernel
\,$\widetilde{w}_{\,t}^{\ssf\infty}\ssb
=\ssb\widetilde{w}_{\,t,\infty}^{\ssf(\sigma,\tau)}$,
for any fixed \ssf$\tau\!\in\!\R$
and uniformy in \ssf$\sigma\!\in\!\C$
with \ssf$\Re\sigma\ssb=\ssb\frac{n+1}2:$
\begin{itemize}
\item[(i)]
Assume that \,$0\!<\!|t|\!\le\!2$\ssf.
\begin{itemize}
\item[(a)]
\,If \,$0\!\le\!r\!\le\!3$\ssf, then
\;$|\,\widetilde{w}_{\,t}^{\ssf\infty}(r)\ssf|\ssf
\lesssim\,\begin{cases}
\;|t|^{-\frac{n-1}2}
&\text{if \;}n\ssb\ge\ssb3\ssf,\\
\;|t|^{-\frac12}\ssf(\ssf1\!-\ssb\log|t|\ssf)
&\text{if \;}n\ssb=\ssb2\ssf.\\
\end{cases}$
\item[(b)]
\,If \,$r\!\ge\!3$\ssf, then
\,$\widetilde{w}_{\,t}^{\ssf\infty}(r)
=\mathrm{O}\bigl(\ssf r^{-\infty}\,e^{-\rho\ssf r}\ssf\bigr)$\ssf.
\end{itemize}
\vspace{1mm}
\item[(ii)]
Assume that \,$|t|\!\ge\!2$\ssf. Then
\begin{equation*}\label{wtildeinftytlarge}
|\,\widetilde{w}_{\,t}^{\ssf\infty}(r)\ssf|\,\lesssim\,
(\ssf1\ssb+\ssb|\ssf r\ssb-\ssb|t|\ssf|\ssf)^{-\infty}\,e^{-\rho\,r}
\qquad\forall\;r\!\ge\!0\ssf.
\end{equation*}
\end{itemize} 
\end{theorem}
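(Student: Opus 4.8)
The plan is to compute $\widetilde{w}_{\,t}^{\ssf\infty}$ from \eqref{ISFT}, to reduce it to one--dimensional oscillatory integrals, and to estimate those by means of Lemma A.1 in Appendix A, the Plancherel bound \eqref{estimatec} and the bounds \eqref{derGammakappa} on $\Gamma_k$ and its derivatives. Write $\widetilde{m}_\sigma(\lambda)=|\ssf\bc(\lambda)\ssf|^{-2}\ssf\lambda^{-\tau}(\lambda^2\!+\!\tilde\rho^{\ssf2})^{\frac{\tau-\sigma}2}$ for the amplitude in \eqref{ISFT} stripped of $\chi_\infty$, of the oscillating factor and of the normalising constant. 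Since $\Re\sigma\!=\!\frac{n+1}2$, the modulus of $(\lambda^2\!+\!\tilde\rho^{\ssf2})^{\frac{\tau-\sigma}2}$ equals $(\lambda^2\!+\!\tilde\rho^{\ssf2})^{\frac12(\tau-\frac{n+1}2)}$, so by \eqref{estimatec} (and the corresponding symbol bounds for $|\ssf\bc(\lambda)\ssf|^{-2}$) the function $\widetilde{m}_\sigma$ is, on $[\ssf1,+\infty)$, a symbol of order $\frac{n-3}2$ whose seminorms are controlled uniformly on the line $\Re\sigma\!=\!\frac{n+1}2$; the Gaussian factor $e^{\ssf\sigma^2}$ in \eqref{ISFT} is there to absorb the exponential growth of $\frac1{\Gamma(\frac{n+1}2-\sigma)}$ --- and the powers of $\Im\sigma$ produced below by integrations by parts --- while $\frac1{\Gamma(\frac{n+1}2-\sigma)}$ itself provides, in the spirit of the theory of Riesz distributions, the regularisation of the borderline singularity occurring at $\sigma\!=\!\frac{n+1}2$. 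By symmetry we may assume $t\!>\!0$, and we treat separately the regions $r\!\ge\!1$ and $r\!\le\!1$.

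\emph{Region $r\!\ge\!1$.} Substituting the Harish--Chandra expansion \eqref{HCexpansion}--\eqref{Philambda} into \eqref{ISFT}, exactly as in the proof of part (ii)(b) of Theorem~\ref{Estimatewt0}, we get
\begin{equation*}
\widetilde{w}_{\,t}^{\ssf\infty}(r)=\const\,(\ssf2\sinh r)^{-\rho}\ssf\sum\nolimits_{k=0}^{+\infty}e^{-2\ssf k\ssf r}\bigl\{\widetilde{I}_{\,k}^{\,+}(t,r)+\widetilde{I}_{\,k}^{\,-}(t,r)\bigr\},
\end{equation*}
where $\widetilde{I}_{\,k}^{\,\pm}(t,r)=\frac{e^{\ssf\sigma^2}}{\Gamma(\frac{n+1}2-\sigma)}\int_{\ssf1}^{+\infty}\!d\lambda\;\chi_\infty(\lambda)\,\bc(\mp\lambda)^{-1}\lambda^{-\tau}(\lambda^2\!+\!\tilde\rho^{\ssf2})^{\frac{\tau-\sigma}2}\Gamma_k(\pm\lambda)\,e^{\ssf i\ssf(t\ssf\pm\ssf r)\ssf\lambda}$. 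On the real line, which lies outside the set $\Omega_\varepsilon$, we have $\Gamma_0\!\equiv\!1$ and, by \eqref{derGammakappa}, $|\ssf\partial_\lambda^{\,\ell}\Gamma_k(\pm\lambda)\ssf|\lesssim k^{\ssf\nu}\ssf(1\!+\!|\lambda|)^{-1}$ for $k\!\ge\!1$; since $\bc(\mp\lambda)^{-1}$ is a symbol of order $\rho$, the amplitude of $\widetilde{I}_{\,k}^{\,\pm}$ is a symbol of order $\rho\!-\!\frac{n+1}2\!=\!-1$ for $k\!=\!0$ and of order $-2$, with an extra factor $k^{\ssf\nu}$, for $k\!\ge\!1$. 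As $\chi_\infty$ and all its derivatives vanish at $\lambda\!=\!1$, repeated integration by parts in $\lambda$ yields, uniformly in $\sigma$ with $\Re\sigma\!=\!\frac{n+1}2$,
\begin{equation*}
|\ssf\widetilde{I}_{\,k}^{\,\pm}(t,r)\ssf|\lesssim(1\!+\!k)^{\ssf\nu}\,(1\!+\!|\ssf t\ssb\pm\ssb r\ssf|)^{-N}\qquad\forall\;N\!\in\!\N\ssf;
\end{equation*}
for $k\!=\!0$ this is legitimate because, although the leading term $\sim\lambda^{-1}$ is not integrable, one integration by parts already produces an integrable symbol of order $-2$ (the cut--off contribution being compactly supported in $[\ssf1,2\ssf]$), after which further integrations by parts are routine. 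Summing up, $(\ssf2\sinh r)^{-\rho}\lesssim e^{-\rho\ssf r}$ and $\sum_k(1\!+\!k)^{\ssf\nu}e^{-2\ssf k\ssf r}\lesssim1$ for $r\!\ge\!1$, whence
\begin{equation*}
|\ssf\widetilde{w}_{\,t}^{\ssf\infty}(r)\ssf|\lesssim e^{-\rho\ssf r}\bigl\{(1\!+\!|\ssf t\ssb+\ssb r\ssf|)^{-\infty}+(1\!+\!|\ssf t\ssb-\ssb r\ssf|)^{-\infty}\bigr\}.
\end{equation*}
If $t\!\ge\!2$, then $t\!+\!r\gtrsim1\!+\!|\ssf t\ssb-\ssb r\ssf|$ and we obtain (ii) on $r\!\ge\!1$; if $0\!<\!t\!\le\!2$ and $r\!\ge\!3$, then $|\ssf t\ssb\pm\ssb r\ssf|\gtrsim r$ and we obtain (i)(b); if $0\!<\!t\!\le\!2$ and $1\!\le\!r\!\le\!3$, the crude bound $|\ssf\widetilde{w}_{\,t}^{\ssf\infty}(r)\ssf|\lesssim1$ is, a fortiori, the estimate (i)(a) in that range.

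\emph{Region $r\!\le\!1$.} Inserting the third integral representation in \eqref{intrepr} (equivalently, using that $\varphi_\lambda(r)$ is, for $r\!\le\!1$, well approximated by the Euclidean spherical function $c_n\ssf(\lambda r)^{-\frac{n-2}2}J_{\frac{n-2}2}(\lambda r)$) we write
\begin{equation*}
\widetilde{w}_{\,t}^{\ssf\infty}(r)=\const\,(\sinh r)^{2-n}\!\int_{-r}^{+r}\!du\;(\cosh r\ssb-\ssb\cosh u)^{\frac{n-3}2}\,\mathcal{K}_\sigma(t\ssb-\ssb u)\,,
\end{equation*}
where $\mathcal{K}_\sigma(\xi)=\frac{e^{\ssf\sigma^2}}{\Gamma(\frac{n+1}2-\sigma)}\int_{\ssf1}^{+\infty}\!d\lambda\;\chi_\infty(\lambda)\,\widetilde{m}_\sigma(\lambda)\,e^{\ssf i\ssf\xi\ssf\lambda}$ is, up to smoother terms, the high--frequency part of a Riesz kernel of order $\frac{n-1}2$ on $\R$: the factor $\frac1{\Gamma(\frac{n+1}2-\sigma)}$ turns its $|\xi|^{-\frac{n-1}2}$--type behaviour at $\xi\!=\!0$ into a well--defined finite part, uniformly up to $\sigma\!=\!\frac{n+1}2$, and $\mathcal{K}_\sigma$ decays rapidly (integrate by parts in $\lambda$) away from $\xi\!=\!0$. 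When $t\!\ge\!2$ and $r\!\le\!1$, one has $|\ssf t\ssb-\ssb u\ssf|\!\ge\!t\!-\!1\!\ge\!\frac t2$ on $[-r,r]$, so only the rapidly decaying regime of $\mathcal{K}_\sigma$ is used and
\begin{equation*}
|\ssf\widetilde{w}_{\,t}^{\ssf\infty}(r)\ssf|\lesssim(1\!+\!t)^{-\infty}\lesssim(1\!+\!|\ssf r\ssb-\ssb t\ssf|)^{-\infty}\ssf e^{-\rho\ssf r}\,,
\end{equation*}
since $e^{-\rho\ssf r}\!\asymp\!1$ and $|\ssf r\ssb-\ssb t\ssf|\!\asymp\!t$ in this range; this completes (ii). When $0\!<\!t\!\le\!2$ and $r\!\le\!1$, the change of variable $u\!=\!r\ssf v$ turns $(\sinh r)^{2-n}(\cosh r\ssb-\ssb\cosh u)^{\frac{n-3}2}du$ into a quantity $\asymp(1\!-\!v^2)^{\frac{n-3}2}dv$ on $[-1,1]$, so that $\widetilde{w}_{\,t}^{\ssf\infty}(r)$ is controlled, in the distributional sense, by $\int_{-1}^{+1}dv\,(1\!-\!v^2)^{\frac{n-3}2}\,\mathcal{K}_\sigma(t\ssb-\ssb r\ssf v)$; one then integrates by parts $\frac{n-3}2$ times (fractionally when $n$ is even, cf. \eqref{inv2}), transferring the singular structure of $\mathcal{K}_\sigma$ onto the weight $(1\!-\!v^2)^{\frac{n-3}2}$, which vanishes to the matching order at $v\!=\!\pm1$, and keeps track of the boundary contributions (this is exactly the mechanism of the classical local-in-time dispersive estimate for the wave equation on $\Rn$). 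One obtains $|\ssf\widetilde{w}_{\,t}^{\ssf\infty}(r)\ssf|\lesssim t^{-\frac{n-1}2}$, and $\lesssim t^{-\frac12}(1\!-\!\log t)$ when $n\!=\!2$ (the borderline weight $(1\!-\!v^2)^{-1/2}$ being responsible for the logarithm), which is (i)(a).

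The heart of the matter is this last, near-light-cone estimate on $\{\,r\!\le\!1,\;0\!<\!t\!\le\!2\,\}$: when $n\!\ge\!3$ the amplitude $\widetilde{m}_\sigma$ has nonnegative order $\frac{n-3}2$, so $\mathcal{K}_\sigma$ is a genuine distribution carrying a $|\xi|^{-\frac{n-1}2}$ singularity at the cone, and recovering the sharp local dispersive rate $t^{-\frac{n-1}2}$ --- together with the precise logarithmic loss at the exceptional dimension $n\!=\!2$, where the order $\frac{n-3}2\!=\!-\frac12$ is borderline --- forces one to combine carefully the Riesz--type regularisation hidden in $\frac1{\Gamma(\frac{n+1}2-\sigma)}$ with the cancellation coming from the vanishing of $(\cosh r\ssb-\ssb\cosh u)^{\frac{n-3}2}$ at $u\!=\!\pm r$, and to keep every estimate uniform in the complex parameter $\sigma$ on the line $\Re\sigma\!=\!\frac{n+1}2$.
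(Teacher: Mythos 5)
Your decomposition into $r\ge1$ and $r\le1$ differs from the paper's (which uses $r\lessgtr t/2$ for part (ii), the inverse Abel transform for (i)(b), and a three-way split in Appendix C for (i)(a)), but the issue is not the decomposition. There are two genuine gaps.

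\emph{Gap 1: the light-cone term $\widetilde{I}_{\ssf0}^{\ssf-}$.} In the region $r\ge1$ you claim, by repeated integration by parts, that
\begin{equation*}
|\ssf\widetilde{I}_{\,k}^{\,\pm}(t,r)\ssf|\lesssim(1\!+\!k)^{\ssf\nu}\,(1\!+\!|\ssf t\ssb\pm\ssb r\ssf|)^{-N}
\qquad\forall\;N,
\end{equation*}
including $k\!=\!0$ with the minus sign. That bound is not established, and in fact cannot be obtained by integration by parts alone. The amplitude $\chi_\infty\ssf a_0^-$ is a symbol of order $-1$ on $\R$ (not absolutely integrable), and each integration by parts brings down a factor $(t-r)^{-1}$, which is large, not small, when $|\ssf t-r\ssf|<1$. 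What integration by parts gives is $O\bigl(|\ssf t-r\ssf|^{-N}\bigr)$ \emph{away} from the light cone; it does not produce a bound uniform up to $t=r$. For a symbol of order $-1$ the true behaviour at $x=0$ is the logarithmic blow-up $\log\frac1{|x|}$ of Lemma A.2.ii, and removing it is exactly the point of the paper's Lemma A.3: one isolates the principal part $a_0^-(\lambda)=c_0\ssf\lambda^{-1-i\Im\sigma}+b_0(\lambda)$ (with $b_0$ of order $-2$), computes its contribution explicitly as $O\bigl(|\sigma|^2/|\Im\sigma|\bigr)$ near the cone, and then lets the prefactor $1/\Gamma\bigl(\tfrac{n+1}2-\sigma\bigr)=1/\Gamma(-i\Im\sigma)\sim -\ssf i\ssf\Im\sigma$ cancel the $1/|\Im\sigma|$. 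You allude to this Riesz-type regularisation in your opening paragraph, but you never actually invoke it in the $r\ge1$ argument, so both (ii) near $r\approx|t|$ and (i)(a) on $1\le r\le3$ with $t$ close to $r$ are left unproved.

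\emph{Gap 2: the small-time, small-$r$ estimate (i)(a).} The argument for $r\le1$, $0<|t|\le2$ — which you yourself identify as the heart of the matter — is only a strategy statement (``integrate by parts $\tfrac{n-3}2$ times, transferring the singular structure onto the weight''), not a proof. The paper devotes all of Appendix C to it, distinguishing $r\le t/2$, $t/2<r<t$ and $r>t$, and in each regime combining the first or third integral representation of $\varphi_\lambda$, the Riesz distributions of Appendix B, and (for $r>t$) the inverse Abel transform, with extra care in even dimensions where the fractional derivative $(\tfrac1{\sinh r}\partial_r)^{(n-1)/2}$ and the half-integer Gamma factors must be tracked to recover the exact rate $t^{-(n-1)/2}$ and the logarithm at $n=2$. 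None of that is carried out here, and it is not routine: uniformity on the line $\Re\sigma=\tfrac{n+1}2$ is precisely what makes it delicate.

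Your treatment of (i)(b) via the Harish--Chandra expansion (instead of the paper's inverse Abel transform), and of (ii) on $r\le1$ via the third integral representation, are legitimate and would work once the above gaps are filled; but as written the proof does not establish the uniform light-cone bound nor the sharp local-in-time rate.
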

\vspace{2mm}

\noindent
\textit{Proof of Theorem \ref{Estimatewtildetinfty}.ii.}
Recall that, up to a positive constant,
\begin{equation*}\label{wtildetinfty}
\widetilde{w}_{\,t}^{\ssf\infty}(r)
={\textstyle\frac{e^{\ssf\sigma^2}}{\Gamma(\frac{n+1}2-\sigma)}}
\int_{\,1}^{+\infty}\hspace{-1mm}d\lambda\;
\chi_\infty(\lambda)\,|\mathbf{c}\hspace{.1mm}(\lambda)|^{-2}\,
\lambda^{-\tau}\,(\lambda^2\!+\!{\tilde\rho}^{\ssf2})^{\frac{\tau-\sigma}2}\,
\varphi_\lambda(r)\,e^{\ssf i\ssf t\ssf\lambda}\,.
\end{equation*}
By symmetry we may assume again that \ssf$t\!>\!0$\ssf.
If \ssf$0\!\le\!r\!\le\!\frac t2$\ssf,
we resume the proof of Theorem \ref{Estimatewt0}.ii.a,
using Lemma A.2 instead of Lemma A.1,
and estimate this way
\begin{equation}\label{estimate1wtildetinfty}
|\,\widetilde{w}_{\,t}^{\ssf\infty}(r)\ssf|\,
\lesssim\,(\ssf t\!-\!r\ssf)^{-\infty}\,\varphi_0(r)\,
\lesssim\,t^{-\infty}\,e^{-\rho\ssf r}\,.
\end{equation}
If \ssf$r\!\ge\!\frac t2$\ssf,
we resume the proof of Theorem \ref{Estimatewt0}.ii.b
and expand this way
\begin{equation}\label{expansionwtildetinfty}
\widetilde{w}_{\,t}^{\ssf\infty}(r)=
{\textstyle\frac{e^{\ssf\sigma^2}}{\Gamma(\frac{n+1}2-\sigma)}}\,
(\sinh r)^{-\rho}\,\sum\nolimits_{\ssf k=0}^{+\infty}e^{-2\ssf k\ssf r}\ssf
\bigl\{\ssf I_{\,k}^{+,\infty}(t,r)+I_{\,k}^{-,\infty}(t,r)\ssf\bigr\}\,,
\end{equation}
where
\begin{equation*}
I_{\,k}^{\pm,\infty}(t,r)\ssf
=\int_{\,0}^{+\infty}\hspace{-1mm}d\lambda\,\chi_{\infty}(\lambda)\,
a_{\ssf k}^\pm(\lambda)\,e^{\ssf i\ssf(t\pm r)\ssf\lambda}
\end{equation*}
and
\begin{equation*}
a_{\ssf k}^\pm(\lambda)
=\mathbf{c}\hspace{.1mm}(\mp\lambda)^{-1}\,
\lambda^{-\tau}\,(\lambda^2\!+\ssf{\tilde\rho}^{\ssf2})^{\frac{\tau-\sigma}2}\,
\Gamma_k(\pm\lambda)\,.
\end{equation*}
It follows from the expression \eqref{cfunction} of the $\bc$--function
and from the estimates \eqref{derGammakappa} of the coefficients \,$\Gamma_k$
\ssf that \,$\chi_\infty\ssf a_{\ssf k}^\pm$ \ssf is a symbol of order
\begin{equation*}
d\ssf=\begin{cases}
-1
&\text{if \,}k\!=\!0\ssf,\\
-2
&\text{if \,}k\!\in\!\N^*\ssb.\\
\end{cases}
\end{equation*}
By applying Lemma A.2,
we obtain the following estimates
of the expressions $I_{\,k}^{\pm,\infty}(t,r)$,
except for $I_{\,0}^{-,\infty}(t,r)$\,:
$\forall\,N\!\in\!\N^*$,
$\exists\;C_N\!\ge\!0$\ssf,
\begin{eqnarray}
\label{Ikplusinfty}
&|\ssf I_{\,k}^{+,\infty}(t,r)\ssf|\ssf
\le\ssf C_N\,|\sigma|^N\,(\ssf1\!+\ssb k\ssf)^{\ssf\nu}\,(\ssf t\!+\!r\ssf)^{-N}
\le\ssf C_N\,|\sigma|^N\,(\ssf1\!+\ssb k\ssf)^{\ssf\nu}\;r^{-N}\,,\\
\label{Ikminusinfty}
&|\ssf I_{\,k}^{-,\infty}(t,r)\ssf|\ssf
\le\ssf C_N\,|\sigma|^N\,(\ssf1\!+\ssb k\ssf)^{\ssf\nu}\,
(\ssf1\ssb+\ssb|\ssf r\!-\ssb t\ssf|\ssf)^{-N}\,.
\end{eqnarray}
As far as \ssf$I_{\,0}^{-,\infty}(t,r)$ \ssf is concerned,
Lemma A.2 yields the estimates
\begin{equation}\label{I0minusinfty}
|\ssf I_{\,0}^{-,\infty}(t,r)\ssf|\ssf\le
\begin{cases}
\,C_N\,|\sigma|^N\,|\ssf r\!-\ssb t\ssf|^{-N}
&\text{if \,}|\ssf r\!-\!t\ssf|\!\ge\!1\ssf,\\
\,C\,\bigl(\ssf1\!+\ssb\log\frac1{|\ssf r-t\ssf|}\ssf\bigr)
&\text{if \,}|\ssf r\!-\!t\ssf|\!\le\!1\ssf.\\
\end{cases}\end{equation}
The second one can be improved
by applying Lemma A.3 instead of Lemma A.2.
For this purpose,
let us establish the asymptotic behavior
of the symbol \ssf$a_{\ssf0}^-(\lambda)$\ssf,
as \ssf$\lambda\!\to\!+\infty\ssf$.
On the one hand,
\begin{equation*}\begin{aligned}\textstyle
\bc(\lambda)^{-1}=\ssf\frac{\Gamma(\rho)}{\Gamma(2\rho)}\,
\frac{\Gamma(i\lambda+\rho)}{\Gamma(i\lambda)}
&\textstyle
=\ssf\frac{\Gamma(\rho)}{\Gamma(2\rho)}\;e^{-\rho}\,
\bigl(\frac{i\lambda+\rho}{i\lambda}\bigr)^{i\lambda-\frac12}\,
(\ssf i\ssf\lambda\!+\!\rho\ssf)^{\rho}\,
\bigl\{\ssf1\ssb+\text{O}\,(\lambda^{-1})\ssf\bigr\}\\
&\textstyle
=\ssf e^{\ssf i\frac{\rho\pi}2}\,\lambda^\rho\,
\bigl\{\ssf1\ssb+\text{O}(\lambda^{-1})\ssf\bigr\}\,,
\end{aligned}\end{equation*}
according to Stirling's formula
\begin{equation*}\label{Stirling}
\Gamma(\xi)=\sqrt{2\ssf\pi\ssf}\,
\xi^{\ssf\xi-\frac12}\,e^{-\xi}\,
\bigl\{\ssf1\ssb+\text{O}\ssf(\ssf|\xi|^{-1})\ssf\bigr\}\,.
\end{equation*}
On the other hand,
\begin{equation*}
\lambda^{-\tau}(\lambda^2\!+\ssb{\tilde\rho}^{\ssf2})^{\frac{\tau-\sigma}2}
=\lambda^{-\sigma}\,\bigl\{\ssf1\ssb
+\text{O}\ssf(\ssf|\sigma|\ssf\lambda^{-2}\ssf)\bigr\}\,.
\end{equation*}
Hence
\begin{equation*}
a_{\ssf0}^-(\lambda)
=c_{\ssf0}\,\lambda^{-1-i\ssf\Im\sigma}\ssb+b_{\ssf0}(\lambda)
\quad\text{with}\quad
|\ssf b_{\ssf0}(\lambda)|\le C\,|\sigma|\,\lambda^{-2}\,.
\end{equation*}
By applying Lemma A.3
with \ssf$m\ssb=\ssb0$ \ssf and \ssf$d\ssb=\ssb-\ssf2$\ssf,
we obtain
\begin{equation}\label{I0minusinftybis}\textstyle
|\ssf I_{\,0}^{-,\infty}(t,r)\ssf|\ssf
\le\,C\,\frac{|\sigma|^2}{|\Im\sigma\ssf|}
\qquad\text{if \,}|\ssf r\!-\!t\ssf|\!\le\!1\ssf.
\end{equation}
Instead of the singularity \ssf$\log\frac1{|\ssf r-t\ssf|}$
\ssf in \eqref{I0minusinfty},
the estimate \eqref{I0minusinftybis} of \ssf$I_{\,0}^{-,\infty}(t,r)$
\ssf involves this time
the singularity \ssf$\frac1{\Im\sigma}$\ssf,
which cancels with the denominator of the front expression
\begin{equation}\label{fraction}\textstyle
\frac{e^{\ssf\sigma^2\vphantom{|}}}{\Gamma(\frac{n+1}2\ssf-\,\sigma)}
\end{equation}
in \eqref{expansionwtildetinfty}.
Notice moreover that the numerator of \eqref{fraction}
yields enough decay to get uniform bounds in \ssf$\sigma$\ssf.
In conclusion, by combining
\eqref{expansionwtildetinfty},
\eqref{Ikplusinfty},
\eqref{Ikminusinfty},
\eqref{I0minusinfty},
\eqref{I0minusinftybis},
we obtain
\begin{equation*}\textstyle
|\,\widetilde{w}_{\,t}^{\ssf\infty}(r)\ssf|\,
\lesssim\,(\ssf1\ssb+\ssb|\ssf r\!-\ssb t\ssf|\ssf)^{-\infty}\,e^{-\rho\,r}
\qquad\forall\;r\!\ge\!\frac t2\,.
\end{equation*}
\hfill$\square$

\begin{remark}\label{Estimatewtinfty}
The kernel \,$w_{\,t}^\infty(r)$ can be estimated in the same way,
except that  
\begin{equation*}\textstyle
|\,w_{\,t}^\infty(r)\ssf|\,
\lesssim\,e^{-\rho\ssf t}\,\log\frac1{|\ssf r-|t|\ssf|}
\end{equation*}
when \,$r$ is close to \,$|t|$\ssf.
\end{remark}

Let us turn to the small time estimates in Theorem \ref{Estimatewtildetinfty}.
The estimate (i.a) is of local nature and thus similar to the Euclidean case.
For the  sake of completeness, we include a proof in Appendix C.
It remains for us to prove the estimate (i.b).  
\medskip

\noindent
\textit{Proof of Theorem \ref{Estimatewtildetinfty}.i.b.}
Here \ssf$0\!<\!|t|\!\le2$
\ssf and \ssf$r\!\ge\!3$\ssf.
By symmetry we may assume again that \ssf$t\!>\!0$\ssf.
We use now the inverse Abel transform
given by Formulae \eqref{inv1} and \eqref{inv2}.
Up to positive constants,
the inverse spherical Fourier transform \eqref{ISFT}
can be rewritten in the following way\,:
\begin{equation*}\textstyle
\widetilde{w}_{\,t}^{\ssf\infty}(r)\,
=\,\frac{e^{\ssf\sigma^2}}{\Gamma(\frac{n+1}2-\sigma)}\;
\mathcal{A}^{-1}g_{\ssf t}\ssf(r)\,,
\end{equation*}
where
\begin{equation*}
g_{\ssf t}(r)\ssf=\ssf2\int_{\,1}^{+\infty}\hspace{-1mm}d\lambda\;
\chi_\infty(\lambda)\,\lambda^{-\tau}\,
(\lambda^2\hspace{-1mm}+\!\tilde\rho^{\ssf2})^{\frac{\tau-\sigma}2}\,
e^{\ssf i\ssf t\ssf\lambda}\,\cos\lambda\ssf r\,.
\end{equation*}
Let us split up
\,$2\cos\lambda\ssf r\ssb
=\ssb e^{\ssf i\ssf\lambda\ssf r}\!+\ssb e^{-i\ssf\lambda\ssf r}$
and \,$g_{\ssf t}\ssf(r)\!=\ssb g_{\,t}^+(r)\!+\ssb g_{\,t}^-(r)$
\ssf accordingly, so that
\begin{equation*}
g_{\,t}^\pm(r)\ssf=\int_{\,1}^{+\infty}\hspace{-1mm}d\lambda\;
\chi_\infty(\lambda)\,\lambda^{-\tau}\,
(\lambda^2\hspace{-1mm}+\!\tilde\rho^{\ssf2})^{\frac{\tau-\sigma}2}\,
e^{\,i\ssf(t\ssf\pm\ssf r)\ssf\lambda}\,.
\end{equation*}
\smallskip

\noindent
\textit{Case 1}\,:
Assume that \ssf$n\!=\!2\ssf m\!+\!1$ \ssf is odd.
First of all, let us expand
\begin{equation*}\textstyle
\bigl(\frac1{\sinh r}\frac\partial{\partial r}\bigr)^m
=\,{\displaystyle\sum\nolimits_{\ssf\ell=1}^{\,m}}\,
\alpha_{\,\ell}^\infty(r)\,\bigl(\frac\partial{\partial r}\bigr)^\ell\,.
\end{equation*}
Since the coefficients \ssf$\alpha_{\,\ell}^\infty(r)$
are linear combinations of products
\begin{equation*}\textstyle
\bigl(\frac1{\sinh r}\bigr)\times
\bigl(\frac\partial{\partial r}\bigr)^{\ell_2}\bigl(\frac1{\sinh r}\bigr)
\times\,\cdots\,\times
\bigl(\frac\partial{\partial r}\bigr)^{\ell_m}\bigl(\frac1{\sinh r}\bigr)\ssf,
\end{equation*}
with \,$\ell_2\!+{\dots}+\ssb\ell_m\!=\ssb m\!-\!\ell$\,,
and \ssf$\frac1{\sinh r}\ssb
=2\ssf\sum_{\ssf j=0}^{\ssf+\infty}\ssf e^{-(2\ssf j+1)\ssf r}$
\ssf is \ssf$\text{O}\ssf(e^{-r})$\ssf, as well as its deri-
\linebreak
vatives, we deduce that \ssf$\alpha_{\,\ell}^\infty(r)$
\ssf is \ssf$\text{O}\ssf(e^{-m\ssf r})$
\ssf as $r\!\to\!+\infty$\ssf.
Consider next
\begin{equation*}\textstyle
\bigl(\frac\partial{\partial r}\bigr)^\ell g_{\,t}^\pm(r)\ssf
={\displaystyle\int_{\,1}^{+\infty}}\hskip-1mm
d\lambda\;\chi_\infty(\lambda)\,
\lambda^{-\tau}\,
(\lambda^2\!+\!\tilde\rho^{\ssf2})^{\frac{\tau-\sigma}2}\,
(\pm\ssf i\lambda)^{\ssf\ell}\,
e^{\,i\ssf(t\pm r)\ssf\lambda}\,.
\end{equation*}
According to Lemma A.2,
for every $N\hspace{-1mm}\in\!\N^*$\ssb,
there exists \ssf$C_N\hspace{-1mm}\ge\!0$ \ssf such that
\begin{equation*}\textstyle
\bigl|\bigl(\frac\partial{\partial r}\bigr)^\ell g_{\,t}^\pm(r)\bigr|
\le\ssf C_N\,|\sigma|^N\,(\ssf r\ssb\pm\ssb t\ssf)^{-N}\ssf.
\end{equation*}
As a conclusion,
\begin{equation*}\textstyle
|\ssf\widetilde{w}_{\,t}^{\ssf\infty}(r)|\ssf
=C\Big( \frac1{\sinh r}\frac{\partial}{\partial r} \Big)^m (g_t^++g_t^-)(r)
\leq C_N\,r^{-N}\,e^{-\frac{n-1}2\ssf r}\qquad\forall N\in \mathbb N^*\,.
\end{equation*}
\smallskip

\noindent
\textit{Case 2}\,: Assume that \ssf $n\!=\!2\ssf m$ \ssf is even.
According to Case 1\ssf,
for every $N\!\in\!\mathbb N^*$,
there exists \ssf$C_N\!\ge\!0$ \ssf such that
\begin{equation*}\textstyle
\bigl|\bigl(\frac1{\sinh s}\frac\partial{\partial s}\bigr)^mg_{\ssf t}(s)\bigr|\ssf
\leq C_N\,|\sigma|^N\,s^{-N}\,e^{-m\ssf s}
\qquad\forall\;s\!\ge\!3\,.
\end{equation*}
By estimating
\begin{equation*}\begin{aligned}
&\textstyle
\cosh s\ssb-\ssb\cosh r
=2\ssf\sinh\frac{s+r}2\ssf\sinh\frac{s-r}2
\gtrsim e^{\ssf r}\sinh\frac{s-r}2\,,\\
&\textstyle
\sinh s\lesssim e^{\ssf s}\,,
\hspace{2mm}e^{-(m-1)\ssf s}\le e^{-(m-1)\ssf r}\,,
\hspace{2mm}s^{-N}\le r^{-N}\,,
\end{aligned}\end{equation*}
and performing the change of variables \ssf$s\!=\!r\!+\!u$\ssf,
we deduce that 
\begin{equation*}
\begin{aligned}
|\ssf\widetilde{w}_{\,t}^{\ssf\infty}(r)\ssf|\,
&\lesssim\textstyle\,
\frac{e^{\ssf\sigma^2}}{\Gamma(\frac{n+1}2-\sigma)}\,
{\displaystyle\int_{\,r}^{+\infty}}\hspace{-1mm}ds\;
\frac{\sinh s}{\sqrt{\ssf\cosh s\,-\,\cosh r\ssf}}\;
\bigl|\ssf\bigl(\frac1{\sinh s}
\frac\partial{\partial s}\bigr)^mg_{\ssf t}(s)\ssf\bigr|\\
&\le\,C_N\int_{\,r}^{+\infty}\textstyle\hspace{-1mm}ds\;
\frac{\sinh s}{\sqrt{\ssf\cosh s\,-\cosh r\ssf}}\;s^{-N}\,e^{-m\ssf s}\\
&\le\,C_N\;r^{-N}\,e^{-(m-\frac12)\ssf r}
\int_{\,0}^{+\infty}\textstyle\hspace{-1mm}
\frac{du\vphantom{\big|}}{\sqrt{\ssf\sinh\frac u2\ssf}}\,
\le\,C_N\;r^{-N}\,e^{-\frac{n-1}2\ssf r}\,.
\end{aligned}
\end{equation*}
\vspace{-5mm}
\hfill$\square$

\section{Dispersive estimates}\label{Dispersive}

In this section we obtain $L^{q'}\!\to\! L^q$ estimates for the operator
$D^{-\tau}\ssf\tilde{D}^{\ssf\tau-\sigma}\ssf e^{\,i\,t\ssf D}$,
which will be crucial for our Strichartz estimates in next section.
Let us split up its kernel
\ssf$w_t\!=\ssb w_{\ssf t}^{\ssf0}\!+\ssb w_{\,t}^\infty$
as before.
We will handle the contribution of \ssf$w_{\ssf t}^{\ssf0}$,
using the pointwise estimates obtained in Subsection \ref{KernelEstimatewt0}
and the following criterion based on the Kunze-Stein phenomenon.

\begin{lemma}\label{KS}
There exists a constant \,$C\!>\!0$ such that,
for every radial measurable function \,$\kappa$ on \,$\Hn$,
for every \,$2\!\le\!q,\tilde{q}\!<\!\infty$ and \ssf$f\!\in\!L^{q'}\ssb(\Hn)$,
\begin{equation*}
\|\ssf f\ssb*\ssb\kappa\,\|_{L^q\vphantom{L^{q'}}}
\le\,C\;\|f\|_{L^{\tilde{q}'}}\,\Bigl\{\ssf\int_{\,0}^{+\infty}\hspace{-1mm}dr\,
(\sinh r)^{n-1}\,\varphi_0(r)^{\ssf\mu}\,|\kappa(r)|^{\ssf Q}\,\Bigr\}^{\frac1Q}\,.
\end{equation*}
where \,$\mu\ssb
=\ssb\frac{2\ssf\min\ssf\{q,\ssf\tilde{q}\}}{q\ssf+\ssf\tilde{q}}$
and \,$Q\ssb=\ssb\frac{q\ssf\tilde{q}}{q\ssf+\ssf\tilde{q}}$
\ssf{\rm (\footnotemark)}.
\end{lemma}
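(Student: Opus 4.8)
The plan is to obtain the estimate by complex interpolation from a short list of classical or elementary bounds for the radial convolution operator $T_\kappa f=f*\kappa$. By density we may assume that $\kappa$ is bounded with compact support, so that every quantity below is finite, and by duality together with the symmetry of radial convolution --- which interchanges the roles of $q$ and $\tilde q$, while the right--hand side is symmetric in them --- we may assume $\tilde q\le q$. We shall use three endpoint estimates:

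(i) $\|T_\kappa\|_{L^2\to L^2}\le\int_{\Hn}\varphi_0\,|\kappa|\,dx$, which follows immediately from the Plancherel formula for the spherical Fourier transform and from the pointwise bound $|\varphi_\lambda|\le\varphi_0$ in \eqref{phi0}; this is the step where the Kunze--Stein phenomenon enters (cf.\ \cite{Co1,Co2,I2});

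(ii) $\|T_\kappa\|_{L^1\to L^\infty}\le\|\kappa\|_{L^\infty}$, trivially;

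(iii) $\|T_\kappa\|_{L^{\tilde q'}\to L^\infty}\le\|\kappa\|_{L^{\tilde q}}$, for $2\le\tilde q<\infty$, by H\"older's inequality and the unimodularity of $G$ --- this is precisely the asserted estimate in the limiting case $q=\infty$, where $Q=\tilde q$ and $\mu=0$.

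I would then interpolate twice, using Stein's theorem for analytic families. First, applying it to the family $T_{\kappa_z}$ with $\kappa_z=|\kappa|^{\frac s2(1-z)}\,(\kappa/|\kappa|)\,e^{z^2}$, between (i) on $\{\Re z=0\}$ and (ii) on $\{\Re z=1\}$, yields the diagonal Kunze--Stein bound
\begin{equation*}
\|T_\kappa\|_{L^{s'}\to L^s}\,\le\,C\,\Bigl(\int_{\Hn}\varphi_0\,|\kappa|^{s/2}\,dx\Bigr)^{2/s}
\qquad(2\le s<\infty)\,,
\end{equation*}
the weight $\varphi_0$ surviving with the right exponent because it is present in (i) and absent in (ii). Next, taking $s=\tilde q$ in this bound and interpolating it against (iii) --- again by Stein's theorem, now with a family $\kappa_z=\varphi_0^{\beta(z)}\,|g|^{\gamma(z)}\,(g/|g|)\,e^{z^2}$ built from $g=\varphi_0^{2/q}\kappa$ and suitable affine functions $\beta,\gamma$ --- one lands on
\begin{equation*}
\|T_\kappa\|_{L^{\tilde q'}\to L^q}\,\le\,C\,\|g\|_{L^Q}
=\,C\,\Bigl(\int_{\,0}^{+\infty}(\sinh r)^{n-1}\,\varphi_0(r)^{\mu}\,|\kappa(r)|^{Q}\,dr\Bigr)^{1/Q}\,,
\end{equation*}
with $\frac1Q=\frac1q+\frac1{\tilde q}$ and $\mu=\frac{2\tilde q}{q+\tilde q}=\frac{2\min\{q,\tilde q\}}{q+\tilde q}$, which is the claim.

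The individual steps are routine; the delicate point is the exponent bookkeeping in the two interpolations, and above all the choice of $\beta$ and $\gamma$ in the second one. They must be arranged so that on each boundary line of the strip the relevant endpoint bound reduces to a pure power of $\|g\|_{L^Q}$ (which forces $\Re\beta\equiv0$ on the line carrying (iii) and $\Re\beta\equiv-2/\tilde q$ on the one carrying the diagonal bound, and then pins down $\Re\gamma$), while simultaneously $\kappa_z=\kappa$ at the interior point corresponding to $(q,\tilde q)$; one then checks that the weight is produced to precisely the exponent $\mu=2Q/q$. It is the presence of this factor $\varphi_0^{\mu}$ --- traceable back to the weight $\varphi_0$ in the Kunze--Stein input (i) --- that makes the estimate sharper than the bound $\|T_\kappa\|_{L^{\tilde q'}\to L^q}\lesssim\|\kappa\|_{L^Q}$ furnished by Young's inequality on the unimodular group $G$. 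The remaining care is of the usual kind for Stein's theorem on a noncompact group: the factor $e^{z^2}$ controls the growth in the strip, the zero set of $\kappa$ must be handled in the complex powers, and one may not invoke any (nonexistent) inclusions among the spaces $L^p(\Hn)$.
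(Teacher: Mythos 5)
Your proof is correct and follows essentially the same approach as the paper: complex interpolation between Herz's weighted $L^2\!\to\!L^2$ Kunze--Stein estimate and Young's inequality, with the factor $\varphi_0^{\mu}$ produced because the weight is present at the Kunze--Stein endpoint and absent at the Young endpoint. The only difference is organizational: the paper runs a single trilinear Hadamard three--lines argument on $\psi(z)=\int f_z*(\varphi_0^{-z}g_z)\,h_z$ with explicit simple--function families, while you apply Stein's analytic--family theorem twice (first to the diagonal $L^{s'}\!\to\!L^{s}$ bound, then off the diagonal against $L^{\tilde q'}\!\to\!L^{\infty}$); the endpoints and the exponent bookkeeping (e.g.\ $\mu=2Q/q$) coincide.
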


\footnotetext{\,Notice that
\ssf$\frac1Q\ssb=\ssb\frac1q\ssb+\ssb\frac1{\tilde{q}}$
\ssf and \ssf$\mu\ssb+\ssb Q\ssb>\ssb2$\ssf.}

\begin{proof}
This estimate is obtained by complex multilinear interpolation
between the following version \cite{Her} of the Kunze--Stein phenomenon
\begin{equation*}
\|\ssf f\ssb*\ssb\kappa\,\|_{L^2}
\lesssim\;\|f\|_{L^2}\int_{\,0}^{+\infty}\hspace{-1mm}dr\,
(\sinh r)^{n-1}\,\varphi_0(r)\,|\kappa(r)|
\end{equation*}
and the elementary inequalities
\begin{equation*}
\|\ssf f\ssb*\ssb\kappa\,\|_{L^q\vphantom{L^{q'}}}
\le\,\|f\|_{L^1\vphantom{L^{q'}}}\,\|\kappa\|_{L^q\vphantom{L^{q'}}}\,,
\quad
\|\ssf f\ssb*\ssb\kappa\,\|_{L^\infty\vphantom{L^{q'}}}
\le\,\|f\|_{L^{\tilde{q}'}}\,\|\kappa\|_{L^{\tilde{q}}\vphantom{L^{q'}}}\,.
\end{equation*}
Specifically,
if \ssf$2\!<\!q\!<\!\infty$ \ssf and \ssf$2\!\le\!\tilde{q}\!\le\!q$\ssf,
define \ssf$2\!\le\!r\!\le\!\infty$ \ssf by
\ssf$\frac1r\!=\!\frac12\frac{1/{\tilde{q}}\ssf-1/q}{1/2\ssf-1/q}$\ssf.
By complex interpolation,
let us deduce the intermediate estimate
\begin{equation}\label{intermediate}
\Bigl|\,\int_{\Hn}\!
f\hspace{-.4mm}*\ssb\bigl(\varphi_0^{-\frac2q}g\bigr)(x)\,h(x)\,dx\,\Bigr|\,
\lesssim\,\|f\|_{L^{\tilde{q}'}\vphantom{L^{Q'}}}\ssf
\|g\|_{L^Q\vphantom{L^{Q'}}}\ssf\|h\|_{L^{q'}\vphantom{L^{Q'}}}
\end{equation}
from the endpoint estimates
\begin{equation}\label{endpoint0}
\Bigl|\,\int_{\Hn}\!
f_0\hspace{-.4mm}*\ssb g_{\ssf0}\ssf(x)\,h_{\ssf0}(x)\,dx\,\Bigr|\,
\le\,\|f_0\|_{L^{r'}\vphantom{L^{Q'}}}\ssf
\|g_{\ssf0}\|_{L^r\vphantom{L^{Q'}}}\ssf
\|h_{\ssf0}\|_{L^1\vphantom{L^{Q'}}}
\end{equation}
and
\begin{equation}\label{endpoint1}
\Bigl|\,\int_{\Hn}\!
f_1\hspace{-.4mm}*\ssb\bigl(\varphi_0^{-1}g_{\ssf1}\bigr)(x)\,
h_{\ssf1}(x)\,dx\,\Bigr|\,
\lesssim\,\|f_1\|_{L^2\vphantom{L^{Q'}}}\ssf
\|g_{\ssf1}\|_{L^1\vphantom{L^{Q'}}}\ssf
\|h_{\ssf1}\|_{L^2\vphantom{L^{Q'}}}\,.
\end{equation}
Here
\begin{equation*}
f=\sum_{\text{finite}}\alpha_j\,\1_{A_j},\quad
g=\sum_{\text{finite}}\beta_{\ssf k}\,\1_{B_{\ssf k}},\quad
h=\sum_{\text{finite}}\gamma_{\ssf\ell}\,\1_{\ssf C_\ell}
\end{equation*}
are linear combinations with nonzero complex coefficients
of characteristic functions of disjoints Borel sets in \ssf$\Hn$
with finite positive measure,
the $B_{\ssf k}$'s being moreover spherical.
As in the proof of the Riesz--Thorin theorem
(see for instance \cite[\S\;1.1]{BL}),
we assume that \ssf$\|f\|_{L^{\tilde{q}'}\vphantom{L^{Q'}}}\!
=\ssb\|g\|_{L^Q\vphantom{L^{Q'}}}\!
=\ssb\|h\|_{L^{q'}\vphantom{L^{Q'}}}\!=\ssb1$\ssf,
we consider the analytic families of simple functions
\begin{equation*}
f_z=\sum_{\text{finite}}\alpha_j\,
|\alpha_j|^{\ssf a(z)-1}\ssf\1_{A_j},\quad
g_{\ssf z}=\sum_{\text{finite}}\beta_{\ssf k}\,
|\beta_{\ssf k}|^{\ssf b(z)-1}\ssf\1_{B_k},\quad
h_{\ssf z}=\sum_{\text{finite}}\gamma_{\ssf\ell}\,
|\gamma_{\ssf\ell}|^{\ssf c(z)-1}\ssf\1_{\ssf C_\ell}\ssf,
\end{equation*}
where
\begin{equation*}\textstyle
\frac{a(z)}{\tilde{q}'}\ssb
=\ssb\bigl(\frac1r\!-\!\frac12\bigr)\ssf z\ssb
+\ssb\frac1{r'}\ssf,\quad
\frac{b(z)}Q\ssb
=\ssb\frac1{r'}\ssf z\ssb
+\ssb\frac1r\ssf,\quad
\frac{c(z)}{q'}\ssb=\ssb-\ssf\frac12\ssf z\ssb+\!1\ssf,
\end{equation*}
and we apply the Hadamard three lines theorem
to the holomorphic function
\begin{equation*}\label{psi}
\psi(z)\,=\int_{\Hn}\!
f_z\hspace{-.4mm}*\ssb\bigl(\varphi_0^{-z}g_{\ssf z}\bigr)(x)\,
h_{\ssf z}(x)\,dx
\end{equation*}
in the vertical strip
\ssf$\{\ssf z\!\in\!\mathbb{C}\mid0\!\le\!\Re z\!\le\!1\ssf\}$\ssf.
More precisely,
if \ssf$\Re z\ssb=\ssb0$\ssf,
then
\begin{equation*}\left\{\begin{matrix}
\;\Re a(z)=\frac{\tilde{q}'}{r'}
&\Longrightarrow\vphantom{\Big|}
&\|f_z\|_{L^{r'}\vphantom{L^{Q'}}}^{\ssf r'\vphantom{\tilde{q}'}}\!
=\|f\|_{L^{\tilde{q}'}\vphantom{L^{Q'}}}^{\ssf\tilde{q}'}\!
=1\ssf,\\
\;\Re b(z)=\frac Qr
&\Longrightarrow\vphantom{\Big|}
&\|g_{\ssf z}\|_{L^r\vphantom{L^{Q'}}}^{\ssf r\vphantom{\tilde{q}'}}\!
=\|g\|_{L^Q\vphantom{L^{Q'}}}^{\ssf Q}\!
=1\ssf,\\
\;\Re c(z)=q^{\ssf\prime}
&\Longrightarrow\vphantom{\Big|}
&\|h_{\ssf z}\|_{L^1\vphantom{L^{Q'}}}^{\vphantom{\tilde{q}'}}\!
=\|h\|_{L^{q'}\vphantom{L^{Q'}}}^{\ssf q'}\!
=1\ssf,
\end{matrix}\right.\end{equation*}
hence \ssf$|\psi(z)|\ssb\le\ssb1$\ssf,
according to \eqref{endpoint0}.
Similarly,
if \ssf$\Re z\ssb=\ssb1$\ssf,
then
\begin{equation*}\left\{\begin{matrix}
\;\Re a(z)=\frac{\tilde{q}'}2
&\Longrightarrow\vphantom{\Big|}
&\|f_z\|_{L^2\vphantom{L^{Q'}}}^{\ssf2\vphantom{\tilde{q}'}}\!
=\|f\|_{L^{\tilde{q}'}\vphantom{L^{Q'}}}^{\ssf\tilde{q}'}\!
=1\ssf,\\
\;\Re b(z)=Q
&\Longrightarrow\vphantom{\Big|}
&\|g_{\ssf z}\|_{L^1\vphantom{L^{Q'}}}^{\vphantom{\tilde{q}'}}\!
=\|g\|_{L^Q\vphantom{L^{Q'}}}^{\ssf Q\vphantom{\tilde{q}'}}\!
=1\ssf,\\
\;\Re c(z)=\frac{q'}2
&\Longrightarrow\vphantom{\Big|}
&\|h_{\ssf z}\|_{L^2\vphantom{L^{Q'}}}^{\ssf2\vphantom{\tilde{q}'}}\!
=\|h\|_{L^{q'}\vphantom{L^{Q'}}}^{\ssf q'}\!
=1\ssf,
\end{matrix}\right.\end{equation*}
hence \ssf$|\psi(z)|\ssb\le\ssb C$,
according to \eqref{endpoint1}.
The estimate \eqref{intermediate} is obtained
by applying the three lines theorem to \ssf$\psi(z)$
\ssf at the point \ssf$z\ssb=\ssb\frac2q$\ssf,
where
\begin{equation*}\left\{\begin{matrix}
\;a(z)\ssb=\ssb1
&\Longrightarrow
&f_z\ssb=\ssb f\ssf,\vphantom{\big|}\\
\;b(z)\ssb=\ssb1
&\Longrightarrow
&g_{\ssf z}\ssb=\ssb g\ssf,\vphantom{\big|}\\
\;c(z)\ssb=\ssb1
&\Longrightarrow
&h_{\ssf z}\ssb=\ssb h\ssf.\vphantom{\big|}
\end{matrix}\right.\end{equation*}
Eventually, the symmetric case, where
\ssf$2\!<\!\tilde{q}\!<\!\infty$ \ssf and \ssf$2\!\le\!q\!\le\!\tilde{q}$\ssf,
\ssf is handled similarly.
\end{proof}

For the second part \ssf$w_{\ssf t}^{\ssf\infty}$,
we resume the Euclidean approach,
which consists in interpolating analytically between 
$L^2\!\to\!L^2$ and $L^1\!\to\!L^\infty$ estimates
for the family of operators 
\begin{equation}\label{AnalyticFamily}\textstyle
\widetilde{W}_{\,t,\infty}^{\ssf(\sigma,\tau)}
=\,\frac{e^{\ssf\sigma^2}}{\Gamma(\frac{n+1}2-\sigma)}\;
\chi_\infty(D)\,D^{-\tau}\,\tilde{D}^{\ssf\tau-\sigma}\,e^{\,i\,t\ssf D}
\end{equation}
in the vertical strip \ssf$0\ssb\le\ssb\Re\sigma\ssb\le\!\frac{n+1}2$\ssf.

\subsection{Small time dispersive estimate}
 
\begin{theorem}\label{dispersive0}
Assume that
\,$0\ssb<\ssb|t|\ssb\le\ssb2$\ssf,
\ssf$2\ssb<\ssb q\ssb<\ssb\infty$\ssf,
\ssf$0\ssb\le\ssb\tau\ssb<\ssb\frac32$
and \,$\sigma\ssb\ge\ssb(n\ssb+\!1)\ssf(\frac12\!-\!\frac1q)$\ssf.
Then, 
\begin{equation*}
\bigl\|\ssf D^{-\tau}\ssf\tilde{D}^{\ssf\tau-\sigma}\ssf e^{\,i\,t\ssf D}
\ssf\bigr\|_{L^{q'}\ssb\to L^q}\lesssim\,\begin{cases}
\,|t|^{-(n-1)(\frac12-\frac1q)}
&\text{if \,}n\ssb\ge\ssb3\ssf,\\
\,|t|^{-(\frac12-\frac1q)}\ssf(\ssf1\!-\ssb\log|t|\ssf)^{1-\frac2q}
&\text{if \,}n\ssb=\ssb2\ssf.\\
\end{cases}
\end{equation*}
\end{theorem}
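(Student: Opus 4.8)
The plan is to split the convolution kernel of \ssf$D^{-\tau}\ssf\tilde D^{\ssf\tau-\sigma}\ssf e^{\,i\,t\ssf D}$ \ssf as \ssf$w_{\ssf t}\ssb=\ssb w_{\ssf t}^{\ssf0}\ssb+\ssb w_{\,t}^\infty$, as in Section \ref{Kernel}, and to estimate the two pieces by different methods.

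For the low--frequency part \ssf$w_{\ssf t}^{\ssf0}$, I would invoke Lemma \ref{KS} with \ssf$\tilde q\ssb=\ssb q$ \ssf(so that \ssf$\mu\ssb=\ssb1$ \ssf and \ssf$Q\ssb=\ssb\frac q2$), together with the bound \ssf$|\ssf w_{\ssf t}^{\ssf0}(r)\ssf|\ssb\lesssim\ssb\varphi_0(r)$ \ssf from Theorem \ref{Estimatewt0}.i, which applies since \ssf$|t|\!\le\!2$ \ssf and \ssf$\tau\!<\!2$. This shows that convolution by \ssf$w_{\ssf t}^{\ssf0}$ \ssf maps \ssf$L^{q'}(\Hn)$ \ssf to \ssf$L^q(\Hn)$ \ssf with norm \ssf$\lesssim\ssf\bigl\{\int_{\,0}^{+\infty}dr\,(\sinh r)^{n-1}\,\varphi_0(r)^{\ssf1+\frac q2}\bigr\}^{\frac2q}$, \ssf which is finite and independent of \ssf$t$ \ssf because \ssf$\varphi_0(r)\ssb\lesssim\ssb(1\!+\!r)\,e^{-\rho\ssf r}$, \ssf$(\sinh r)^{n-1}\ssb\lesssim\ssb e^{\ssf2\rho\ssf r}$ \ssf and \ssf$q\ssb>\ssb2$. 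Hence this contribution is \ssf$\mathrm{O}(1)$.

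For the high--frequency part, set \ssf$\sigma_0\ssb=\ssb(n\!+\!1)\ssf(\frac12\!-\!\frac1q)$ \ssf and run Stein's complex interpolation on the analytic family \ssf$\widetilde W_{\,t,\infty}^{\ssf(\sigma,\tau)}\ssb=\ssb\frac{e^{\ssf\sigma^2}}{\Gamma(\frac{n+1}2-\sigma)}\,\chi_\infty(D)\,D^{-\tau}\,\tilde D^{\ssf\tau-\sigma}\,e^{\,i\,t\ssf D}$ \ssf in the vertical strip \ssf$0\!\le\!\Re\sigma\!\le\!\frac{n+1}2$. On the line \ssf$\Re\sigma\ssb=\ssb0$, the spherical Plancherel theorem identifies the \ssf$L^2\!\to\!L^2$ \ssf norm of a radial convolution operator with the supremum of its spherical multiplier, here \ssf$\sup_{\ssf\lambda\ge1}\lambda^{-\tau}(\lambda^2\!+\!\tilde\rho^{\ssf2})^{\tau/2}\ssb=\ssb(1\!+\!\tilde\rho^{\ssf2})^{\tau/2}\!<\!\infty$ \ssf(using \ssf$\tau\!\ge\!0$), while the front factor is bounded there because \ssf$|\ssf e^{\ssf\sigma^2}\ssf|\ssb=\ssb e^{-(\Im\sigma)^2}$ \ssf dominates the at most exponential growth of \ssf$1/|\ssf\Gamma\ssf|$; thus \ssf$\|\ssf\widetilde W_{\,t,\infty}^{\ssf(\sigma,\tau)}\ssf\|_{L^2\to L^2}\!\le\!C$ \ssf uniformly in \ssf$t$ \ssf and \ssf$\Im\sigma$. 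On the line \ssf$\Re\sigma\ssb=\ssb\frac{n+1}2$, Theorem \ref{Estimatewtildetinfty}.i gives, uniformly in \ssf$\Im\sigma$, the bound \ssf$\|\ssf\widetilde w_{\,t}^{\ssf\infty}\ssf\|_{L^\infty}\!\lesssim\!|t|^{-\frac{n-1}2}$ \ssf if \ssf$n\!\ge\!3$, \ssf resp. \ssf$\lesssim\!|t|^{-\frac12}(1\!-\!\log|t|)$ \ssf if \ssf$n\ssb=\ssb2$, hence the same bound for \ssf$\|\ssf\widetilde W_{\,t,\infty}^{\ssf(\sigma,\tau)}\ssf\|_{L^1\to L^\infty}$. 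The family is analytic, and the Gaussian factor \ssf$e^{\ssf\sigma^2}$ \ssf ensures admissible growth and the uniformity in \ssf$\Im\sigma$ \ssf needed above. Interpolating at \ssf$\theta\ssb=\ssb1\!-\!\frac2q$ — which reaches the line \ssf$\Re\sigma\ssb=\ssb\theta\,\frac{n+1}2\ssb=\ssb\sigma_0$ \ssf and the Lebesgue pair \ssf$\bigl(\frac1{q'},\frac1q\bigr)\ssb=\ssb\bigl(\frac{1+\theta}2,\frac{1-\theta}2\bigr)$ — yields \ssf$\|\ssf\widetilde W_{\,t,\infty}^{\ssf(\sigma_0,\tau)}\ssf\|_{L^{q'}\to L^q}\!\lesssim\!|t|^{-(n-1)(\frac12-\frac1q)}$ \ssf if \ssf$n\!\ge\!3$ \ssf(resp. \ssf$\lesssim\!|t|^{-(\frac12-\frac1q)}(1\!-\!\log|t|)^{1-\frac2q}$ \ssf if \ssf$n\ssb=\ssb2$). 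Since \ssf$\frac{n+1}2\!-\!\sigma_0\ssb=\ssb\frac{n+1}q\!>\!0$, the front constant \ssf$\frac{e^{\ssf\sigma_0^2}}{\Gamma(\frac{n+1}q)}$ \ssf is finite and nonzero, so dividing it out gives the same estimate for convolution by \ssf$w_{\,t}^\infty$ \ssf when \ssf$\sigma\ssb=\ssb\sigma_0$.

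Finally, for \ssf$\sigma\!>\!\sigma_0$ \ssf I would factor \ssf$D^{-\tau}\tilde D^{\ssf\tau-\sigma}e^{\,i\,t\ssf D}\ssb=\ssb\tilde D^{-(\sigma-\sigma_0)}\ssb\circ\ssb D^{-\tau}\tilde D^{\ssf\tau-\sigma_0}e^{\,i\,t\ssf D}$ \ssf and use that \ssf$\tilde D^{-(\sigma-\sigma_0)}$ \ssf is bounded on \ssf$L^q(\Hn)$ \ssf(its radial kernel is exponentially decaying and locally integrable, hence in \ssf$L^1$). Adding the \ssf$\mathrm{O}(1)$ \ssf contribution of \ssf$w_{\ssf t}^{\ssf0}$ \ssf to the bound for \ssf$w_{\,t}^\infty$, and observing that the right--hand side of the theorem stays bounded below by a positive constant when \ssf$0\!<\!|t|\!\le\!2$, yields the claim. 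The bulk of the work is the high--frequency kernel bound, i.e. Theorem \ref{Estimatewtildetinfty}; once that is granted, the only delicate point is the interpolation set--up — checking that the Gaussian factor makes the family admissible, and pinning down the exact Lebesgue pair \ssf$(q',q)$ \ssf and exponent \ssf$\sigma_0$ \ssf reached at \ssf$\theta\ssb=\ssb1\!-\!\frac2q$.
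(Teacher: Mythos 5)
Your proposal is correct and follows the paper's own proof essentially step by step: the same kernel splitting $w_t = w_t^0 + w_t^\infty$, the Kunze--Stein lemma applied with $\tilde q = q$ for the low--frequency piece, and Stein interpolation on the analytic family $\widetilde W_{\ssf t,\infty}^{\ssf(\sigma,\tau)}$ between the Plancherel bound at $\Re\sigma=0$ and the $L^1\!\to\!L^\infty$ kernel bound at $\Re\sigma=\frac{n+1}2$. The extra material you supply — the explicit verification of the endpoint $L^2$ bound, the role of $e^{\sigma^2}$ in making the family admissible, the reduction of $\sigma>\sigma_0$ to $\sigma=\sigma_0$ via $\widetilde D^{-(\sigma-\sigma_0)}$, and the $n=2$ variant — is all consistent with what the paper does (or leaves implicit) and does not change the method.
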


\begin{proof}
We divide the proof into two parts,
corresponding to the kernel decomposition
\ssf$w_t\!=\ssb w_{\ssf t}^{\ssf0}\!+\ssb w_{\,t}^\infty$.
By applying Lemma \ref{KS}
and by using the pointwise estimates in Theorem \ref{Estimatewt0}.i,
we obtain on one hand
\begin{equation*}
\begin{aligned}
\bigl\|\ssf f\ssb*\ssb w_{\ssf t}^{\ssf0}\ssf\bigr\|_{L^q}
&\lesssim\,\Bigl\{\ssf\int_{\,0}^{+\infty}\hspace{-1mm}dr\,
(\sinh r)^{n-1}\,\varphi_0(r)\,|\ssf w_{\ssf t}^{\ssf0}(r)|^{\frac q2}
\,\Bigr\}^{\frac2q}\;\|f\|_{L^{q'}}\\
&\lesssim\,\Big\{\ssf\int_{\,0}^{+\infty}\hspace{-1mm}dr\,
(1\!+\ssb r)^{1+\frac q2}\,e^{-\rho\,r\ssf(\frac q2-1)}
\ssf\Bigr\}^{\frac2q}\;\|f\|_{L^{q'}}\\
&\lesssim\;\|f\|_{L^{q'}}
\qquad\forall\;f\!\in\!L^{q'}.
\vphantom{\int_0^1}
\end{aligned}
\end{equation*}
For the second part,
we consider the analytic family \eqref{AnalyticFamily}.
If \ssf$\Re\sigma\ssb=\ssb0$\ssf, then
\begin{equation*}
\|\ssf f\ssb*\ssb\widetilde{w}_{\,t}^{\ssf\infty}\ssf\|_{L^2}
\lesssim\,\|f\|_{L^2}
\qquad\forall\;f\!\in\!L^2.
\end{equation*}
If \ssf$\Re\sigma\ssb=\ssb\frac{n+1}2$,
we deduce
from the pointwise estimates in Theorem \ref{Estimatewtildetinfty}.i
that
\begin{equation*}
\|\ssf f\ssb*\ssb\widetilde{w}_{\,t}^{\ssf\infty}\ssf\|_{L^\infty}
\lesssim\,|t|^{-\frac{n-1}2}\,\|f\|_{L^1}
\qquad\forall\;f\!\in\!L^1.
\end{equation*}
By interpolation we conclude
for \ssf$\sigma\ssb=\ssb(n+1)\bigl(\frac12\!-\!\frac1q\bigr)$
\ssf that
\begin{equation*}
\bigl\|\ssf f\ssb*\ssb w_{\ssf t}^\infty\ssf\|_{L^q\vphantom{L^{q'}}}
\lesssim\,|t|^{-(n-1)(\frac12-\frac1q)} \|f\|_{L^{q'}}
\qquad\forall\;f\!\in\!L^{q'}.
\end{equation*}
\end{proof}

\subsection{Large time dispersive estimate}
 
\begin{theorem}\label{dispersiveinfty}
Assume that
\,$|t|\ssb\ge\ssb2$\ssf,
\ssf$2\ssb<\ssb q\ssb<\ssb\infty$\ssf,
\ssf$0\ssb\le\ssb\tau\ssb<\ssb\frac32$
and \,$\sigma\ssb\ge\ssb(n\ssb+\!1)\ssf(\frac12\!-\!\frac1q)$\ssf.
Then
\begin{equation*}
\bigl\|\ssf D^{-\tau}\ssf\tilde{D}^{\ssf\tau-\sigma}\ssf e^{\,i\,t\ssf D}
\ssf\bigr\|_{L^{q'}\ssb\to L^q}\lesssim\,|t|^{\ssf\tau-3}\,.
\end{equation*}
\end{theorem}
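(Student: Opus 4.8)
The plan is to follow the two--step scheme used in the proof of Theorem \ref{dispersive0}, but feeding in the large--time kernel estimates of Section \ref{Kernel}. Split the kernel of $D^{-\tau}\tilde D^{\ssf\tau-\sigma}e^{\,itD}$ as $w_t\!=\!w_{\ssf t}^{\ssf0}\!+\!w_{\,t}^\infty$ and treat the two pieces separately. For the high--frequency piece it is harmless to assume $\sigma\!=\!\sigma_0\!:=\!(n\!+\!1)\big(\frac12\!-\!\frac1q\big)$, which lies in $(0,\frac{n+1}2)$ since $2\!<\!q\!<\!\infty$; the general case $\sigma\!\ge\!\sigma_0$ then follows by composing with $\tilde D^{-(\sigma-\sigma_0)}$, which is bounded on $L^q(\Hn)$. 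The low--frequency piece requires no such reduction, since the pointwise bounds of Theorem \ref{Estimatewt0} hold for every $\sigma\!\in\!\R$.

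For $w_{\ssf t}^{\ssf0}$ I would apply Lemma \ref{KS} with $\tilde q\!=\!q$, so that $\mu\!=\!1$ and $Q\!=\!\frac q2$, reducing matters to
\begin{equation*}
\Bigl\{\ssf\int_{\,0}^{+\infty}\!dr\,(\sinh r)^{n-1}\,\varphi_0(r)\,|\ssf w_{\ssf t}^{\ssf0}(r)|^{\frac q2}\,\Bigr\}^{\frac2q}\,\lesssim\,|t|^{\ssf\tau-3}\,.
\end{equation*}
Into this I would insert the pointwise bounds of Theorem \ref{Estimatewt0}.ii, splitting the radial integral at $r\!=\!\frac{|t|}2$. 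On $[\,0,\frac{|t|}2\ssf]$ the estimate $|w_{\ssf t}^{\ssf0}(r)|\!\lesssim\!|t|^{\ssf\tau-3}\varphi_0(r)$ pulls out the factor $|t|^{\ssf(\tau-3)\frac q2}$ and leaves the \emph{convergent} integral $\int_{\,0}^{+\infty}(\sinh r)^{n-1}\varphi_0(r)^{1+\frac q2}\,dr$; convergence holds precisely because $q\!>\!2$, the decay $\varphi_0(r)^{\frac q2-1}\!\sim\!e^{-\rho r(\frac q2-1)}$ outweighing the volume growth $(\sinh r)^{n-1}\!\sim\!e^{2\rho r}$. On $[\frac{|t|}2,+\infty)$ the bound $|w_{\ssf t}^{\ssf0}(r)|\!\lesssim\!(1\!+\!|r\!-\!|t||)^{\tau-2}e^{-\rho r}$ combined with the same volume/decay balance produces a contribution $\mathrm{O}\bigl(|t|^2e^{-c|t|}\bigr)$, which for $|t|\!\ge\!2$ is smaller than $|t|^{\ssf(\tau-3)\frac q2}$. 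Hence $\|\ssf f\ssb*\ssb w_{\ssf t}^{\ssf0}\ssf\|_{L^q}\!\lesssim\!|t|^{\ssf\tau-3}\,\|f\|_{L^{q'}}$.

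For $w_{\,t}^\infty$ I would run Stein's analytic interpolation for the family $\widetilde W_{\,t,\infty}^{\ssf(\sigma,\tau)}$ of \eqref{AnalyticFamily} in the strip $0\!\le\!\Re\sigma\!\le\!\frac{n+1}2$; note that $e^{\ssf\sigma^2}/\Gamma(\frac{n+1}2\!-\!\sigma)$ is entire and, together with the boundedness of the symbol, makes the family admissible. On the line $\Re\sigma\!=\!0$, spherical Plancherel gives $\|\widetilde W_{\,t,\infty}^{\ssf(\sigma,\tau)}\|_{L^2\to L^2}\!\lesssim\!1$ uniformly in $\Im\sigma$: the multiplier $\chi_\infty(\lambda)\ssf\lambda^{-\tau}(\lambda^2\!+\!\tilde\rho^{\ssf2})^{\frac\tau2}$ is bounded, and the Gaussian numerator $e^{\ssf\sigma^2}$ dominates the growth of $1/\Gamma(\frac{n+1}2\!-\!\sigma)$. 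On the line $\Re\sigma\!=\!\frac{n+1}2$, the $L^1\!\to\!L^\infty$ norm equals $\|\widetilde w_{\,t}^{\ssf\infty}\|_{L^\infty}$, which by Theorem \ref{Estimatewtildetinfty}.ii is $\lesssim|t|^{-N}$ for every $N$, again uniformly in $\Im\sigma$. Interpolating at the real point $\sigma_0=\theta\,\frac{n+1}2$ with $\theta=1-\frac2q$ yields $\|\widetilde W_{\,t,\infty}^{\ssf(\sigma_0,\tau)}\|_{L^{q'}\to L^q}\!\lesssim\!|t|^{-N\theta}$; since $N$ is arbitrary and $e^{\ssf\sigma_0^2}/\Gamma(\frac{n+1}2\!-\!\sigma_0)$ is a positive constant, this gives $\|\ssf f\ssb*\ssb w_{\,t}^\infty\ssf\|_{L^q}\!\lesssim\!|t|^{\ssf\tau-3}\,\|f\|_{L^{q'}}$. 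Adding the two pieces completes the proof.

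The substance of the argument already resides in the kernel estimates of Theorems \ref{Estimatewt0} and \ref{Estimatewtildetinfty}, so I do not expect a genuinely hard new point. The two steps that deserve care are the verification that the radial Kunze--Stein integral converges and reproduces exactly $|t|^{\ssf\tau-3}$ --- which works thanks to the exponential decay of $\varphi_0$ against the exponential volume growth of $\Hn$, and uses $q\!>\!2$ --- and the uniformity in $\Im\sigma$ needed to run Stein interpolation, which is exactly why the factor $e^{\ssf\sigma^2}$ was built into the analytic family $\widetilde W_{\,t,\infty}^{\ssf(\sigma,\tau)}$.
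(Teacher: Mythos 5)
Your proposal is correct and follows essentially the same route as the paper's proof: you apply Lemma \ref{KS} together with Theorem \ref{Estimatewt0}.ii to bound the low-frequency piece $w_{\ssf t}^{\ssf0}$ (the paper splits it by indicator functions on $B(0,\frac{|t|}2)$ and its complement and applies the lemma twice, which is the same computation as your single application followed by splitting the radial integral), and you run Stein interpolation on the analytic family $\widetilde W_{\,t,\infty}^{\ssf(\sigma,\tau)}$ between the $L^2\!\to\!L^2$ bound at $\Re\sigma\!=\!0$ and the $L^1\!\to\!L^\infty$ bound $O(|t|^{-\infty})$ at $\Re\sigma\!=\!\frac{n+1}2$ from Theorem \ref{Estimatewtildetinfty}.ii, exactly as in the paper's Estimate 3. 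Your explicit remark that the reduction to $\sigma=\sigma_0$ via $\tilde D^{-(\sigma-\sigma_0)}$ is legitimate, and your correct recognition that Lemma \ref{KS} would fail directly on $w_t^\infty$ because of the log singularity at $r=|t|$ (which is precisely why the analytic family is introduced), match the paper's reasoning.
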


\begin{proof}
We divide the proof into three parts,
corresponding to the kernel decomposition
\begin{equation*}
w_t=\1_{\ssf B(0,\frac{|t|}2)}\ssf w_{\ssf t}^{\ssf0}
+\1_{\,\Hn\smallsetminus\ssf B(0,\frac{|t|}2)}\ssf w_{\ssf t}^{\ssf0}
+\ssf w_{\,t}^\infty\ssf.
\end{equation*}

\noindent
\textit{Estimate 1}\,:
By applying Lemma \ref{KS}
and using the pointwise estimates in Theorem \ref{Estimatewt0}.ii.a, 
we obtain
\begin{equation*}
\begin{aligned}
\|\ssf f*\{\1_{\ssf B(0,\frac{|t|}2)}\ssf w_{\ssf t}^{\ssf0}\ssf\}\,\|_{L^q}
&\lesssim\,\Bigl\{\ssf\int_{\,0}^{\frac{|t|}2}\!dr\,
(\sinh r)^{n-1}\,\varphi_0(r)\,|\ssf w_{\ssf t}^{\ssf0}(r)|^{\frac q2}
\,\Bigr\}^{\frac2q} \;\|f\|_{L^{q'}}\\
&\lesssim\;\underbrace{\Bigl\{\ssf\int_{\,0}^{+\infty}\hspace{-1mm}dr\,
(1\!+\ssb r)^{1+\frac q2}\,e^{-\rho\,r\ssf(\frac q2-1)}
\ssf\Bigr\}^{\frac2q}}_{<+\infty}\,
|t|^{\ssf\tau-3}\;\|f\|_{L^{q'}}
\qquad\forall\;f\!\in\!L^{q'}.
\end{aligned}
\end{equation*}

\noindent
\textit{Estimate 2}\,:
By applying Lemma \ref{KS}
and using the pointwise estimates in Theorem \ref{Estimatewt0}.ii.b, 
we obtain
\begin{equation*}
\begin{aligned}
\|\ssf f*\{\1_{\ssf\Hn\smallsetminus\ssf B(0,\frac{|t|}2)}\ssf
w_{\ssf t}^{\ssf0}\ssf\}\,\|_{L^q}
&\lesssim\,\Bigl\{\ssf\int_{\,\frac{|t|}2}^{+\infty}\hspace{-1mm}dr\,
(\sinh r)^{n-1}\,\varphi_0(r)\,
|\ssf w_{\ssf t}^{\ssf0}(r)|^{\frac q2}
\,\Bigr\}^{\frac2q}\;\|f\|_{L^{q'}}\\
&\lesssim\,\underbrace{
\Bigl\{\ssf\int_{\,\frac{|t|}2}^{+\infty}\hspace{-1mm}dr\,
r\,e^{-(\frac q2-1)\ssf\rho\ssf r}\,\Bigr\}^{\frac2q}
}_{\lesssim\;|t|^{-\infty}}\,
\|f\|_{L^{q'}}
\qquad\forall\;f\!\in\!L^{q'}.
\end{aligned}
\end{equation*}

\noindent
\textit{Estimate 3}\,:
In order to estimate
the \ssf$L^{q'}\hspace{-1mm}\rightarrow\!L^q$ norm
of \ssf$f\ssb\mapsto\ssb f\ssb*\ssb w_{\,t}^{\ssf\infty}$,
we may apply Lemma \ref{KS}
and use pointwise estimates of \ssf$w_{\,t}^{\ssf\infty}$
(see Remark \ref{Estimatewtinfty}).
While
\begin{equation*}
\int_{\,0}^{\ssf|t|-1}\hspace{-1mm}dr\,
(\sinh r)^{n-1}\,\varphi_0(r)\,|\ssf w_{\,t}^\infty(r)|^{\frac q2}
\quad\text{and}\quad 
\int_{\ssf|t|+1}^{+\infty}\hspace{-1mm}dr\,
(\sinh r)^{n-1}\,\varphi_0(r)\,|\ssf w_{\,t}^\infty(r)|^{\frac q2}
\end{equation*}
are \ssf$\text{O}(\ssf|t|^{-\infty})$ \ssf
for any \ssf$\sigma\!\in\!\R$\ssf,
the integral
\begin{equation*}
\int_{\ssf|t|-1}^{\ssf|t|+1}\hspace{-1mm}dr\,
(\sinh r)^{n-1}\,\varphi_0(r)\,|\ssf w_{\,t}^\infty(r)|^{\frac q2}
\end{equation*}
is finite provided \ssf$\sigma\!>\!\frac{n+1}2\!-\!\frac2q$\ssf,
which is too large compared with the critical exponent
\ssf$(n\!+\!1)(\frac12\!-\!\frac1q)$\ssf.
Instead we use again interpolation
for the analytic family \eqref{AnalyticFamily}.
If \ssf$\Re\sigma\ssb=\ssb0\ssf$, then
\begin{equation*}
\|\ssf f*\widetilde{w}_{\,t}^{\ssf\infty}\ssf\|_{L^2}
\lesssim\,\|f\|_{L^2}
\qquad\forall\;f\!\in\!L^2.
\end{equation*}
If \ssf$\Re\sigma\ssb=\ssb\frac{n+1}2$,
we deduce from Theorem \ref{Estimatewtildetinfty}.ii that
\begin{equation*}
\|\ssf f*\widetilde{w}_{\,t}^{\ssf\infty}\ssf\|_{L^\infty}
\lesssim\,|t|^{-\infty} \,\|f\|_{L^1}
\qquad\forall\;f\!\in\!L^1.
\end{equation*}
By interpolation we conclude for
\ssf$\sigma\ssb=\ssb(n\!+\!1)\bigl(\frac12\!-\!\frac1q\bigr)$
that
\begin{equation*}
\bigl\|\ssf f\ssb*\ssb w_{\ssf t}^\infty\ssf\|_{L^q\vphantom{L^{q'}}}
\lesssim\,|t|^{-\infty} \,\|f\|_{L^{q'}}
\qquad\forall\;f\!\in\!L^{q'}.
\end{equation*}
\end{proof}

By taking \ssf$\tau\!=\!1$
\ssf in Theorems \ref{dispersive0} and \ref{dispersiveinfty},
we obtain in particular the following dispersive estimates.

\begin{corollary}\label{DispersiveGlobal}
Let \,$2\!<\!q\!<\!\infty$
and \,$\sigma\!\ge\!(n\!+\!1)\bigl(\frac12\!-\!\frac1q\bigr)$.
Then
\begin{equation*}\textstyle
\|\,\tilde{D}^{-\sigma+1}\,\frac{e^{\,i\ssf t\ssf D}}D\,\|_{L^{q'}\!\to L^q}
\lesssim\,\begin{cases}
\;|t|^{-(n-1)(\frac12-\frac1q)}
&\text{if \;}0\!<\!|t|\!\le\!2\ssf,\\
\;|t|^{-2}
&\text{if \;}|t|\!\ge\!2\ssf,
\end{cases}
\end{equation*}
with \,$|t|^{-(n-1)(\frac12-\frac1q)}$ replaced by
\,$|t|^{-(\frac12-\frac1q)}\ssf(\ssf1\!-\ssb\log|t|\ssf)^{1-\frac2q}$
in dimension \,$n\!=\!2$\ssf.
\end{corollary}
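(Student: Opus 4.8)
The plan is to specialize Theorems \ref{dispersive0} and \ref{dispersiveinfty} to the exponent $\tau\!=\!1$. First I would observe that, with $\tau\!=\!1$, the operator $D^{-\tau}\,\tilde{D}^{\ssf\tau-\sigma}\,e^{\,i\,t\ssf D}$ appearing in both theorems is exactly $\tilde{D}^{-\sigma+1}\,\frac{e^{\,i\ssf t\ssf D}}D$, and that the admissibility range $0\!\le\!\tau\!<\!\frac32$ is trivially satisfied. Hence, under the hypotheses $2\!<\!q\!<\!\infty$ and $\sigma\!\ge\!(n\!+\!1)\bigl(\frac12\!-\!\frac1q\bigr)$, Theorem \ref{dispersive0} gives directly the small--time bound $|t|^{-(n-1)(\frac12-\frac1q)}$ for $0\!<\!|t|\!\le\!2$ (with the stated correction $|t|^{-(\frac12-\frac1q)}\,(1\!-\!\log|t|)^{1-\frac2q}$ when $n\!=\!2$), while Theorem \ref{dispersiveinfty} gives the large--time bound $|t|^{\ssf\tau-3}\!=\!|t|^{-2}$ for $|t|\!\ge\!2$. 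Concatenating the two ranges completes the proof.

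Since the statement is an immediate specialization, there is no genuine obstacle at this level: all the work is already contained in Theorems \ref{dispersive0} and \ref{dispersiveinfty}, which themselves rely on the pointwise kernel estimates of Section \ref{Kernel} (Theorems \ref{Estimatewt0} and \ref{Estimatewtildetinfty}, together with Remark \ref{Estimatewtinfty}), on the Kunze--Stein--type Lemma \ref{KS} for the low--frequency kernel $w_{\ssf t}^{\ssf0}$, and on analytic interpolation of the family \eqref{AnalyticFamily} between $L^2\!\to\!L^2$ and $L^1\!\to\!L^\infty$ for the high--frequency kernel $w_{\,t}^\infty$. The one point worth recording is why $\tau\!=\!1$ is the natural choice: it is the exponent producing the half--wave propagator $\frac{\sin(t\ssf D)}{D}$, and it is precisely for $\tau\!=\!1$ that Theorem \ref{dispersiveinfty} yields the sharp large--time decay $|t|^{-2}$, which is stronger than its Euclidean counterpart and is what will drive the improved Strichartz and well--posedness results in the later sections.
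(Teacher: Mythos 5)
Your proposal is correct and is exactly the paper's argument: the corollary is obtained by setting $\tau=1$ in Theorems \ref{dispersive0} and \ref{dispersiveinfty}, which yields the small--time bound $|t|^{-(n-1)(\frac12-\frac1q)}$ (logarithmically modified when $n=2$) and the large--time bound $|t|^{\tau-3}=|t|^{-2}$. The extra remarks you add about why $\tau=1$ is the natural exponent are sensible but not part of the proof proper.
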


\begin{remark}
Notice that Tataru \ssf\cite{Ta} obtained
dispersive estimates with exponential decay in time
for the operators \,$\cos t\ssf D$ and \,$\frac{\sin t\ssf D}D$\ssf,
but did not prove actual Strichartz estimates.
Here we obtain dispersive estimates with polynomial decay in time
for the operator \,$e^{\,i\,t\ssf D}$\ssf.
This difference reflects the fact that
the Fourier multipliers associated with the operators
\,$\cos t\ssf D$ and \,$\frac{\sin t\ssf D}D$
are analytic in a strip of the complex plane,
which is not the case of \,$e^{\,i\,t\ssf D}$\ssf.
\end{remark}

By applying Lemma \ref{KS} in full generality,
we obtain the following decoupled estimate for the operators
\begin{equation*}
W_{\ssf t,0}^{\ssf(\sigma,\tau)}
=\,\chi_0(D)\,D^{-\tau}\,\tilde{D}^{\ssf\tau-\sigma}\,e^{\,i\,t\ssf D}\,.
\end{equation*}

\begin{proposition}\label{decoupledwt0}
Let \,$2\!<\!q,\tilde{q}\!<\!\infty$\ssf,
\ssf$0\!\le\!\tau\!<\!\frac32$
\,and \ssf$\sigma\!\in\!\R$\ssf.
Then
\begin{equation*}
\|\ssf W_{\ssf t,0}^{\ssf(\sigma,\tau)}
\|_{L^{\tilde{q}'}\ssb\to L^q}\ssb
\lesssim\ssf(\ssf1\ssb+\ssb|t|\ssf)^{\ssf\tau-3}
\qquad\forall\;t\!\in\!\R\ssf.
\end{equation*}
\end{proposition}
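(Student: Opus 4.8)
The plan is to follow the two regimes of Theorem~\ref{Estimatewt0} and, in each, to feed the pointwise bounds on the radial kernel \ssf$w_{\ssf t}^{\ssf0}$ \ssf into the Kunze--Stein criterion of Lemma~\ref{KS}, now applied with \ssf$\tilde q$ \ssf an arbitrary exponent in \ssf$(2,\infty)$\ssf. The only analytic input beyond these two results is the elementary fact that, since \ssf$\mu\ssb+\ssb Q\ssb>\ssb2$ \ssf (footnote to Lemma~\ref{KS}), \ssf$\varphi_0(r)\lesssim(1\!+\!r)\ssf e^{-\rho r}$ \ssf and \ssf$(\sinh r)^{n-1}\lesssim e^{\ssf2\rho r}$\ssf, the integral \ssf$\int_{\,0}^{+\infty}dr\,(\sinh r)^{n-1}\varphi_0(r)^{\mu+Q}$ \ssf converges, its integrand being \ssf$\lesssim(1\!+\!r)^{\mu+Q}e^{-\rho(\mu+Q-2)\ssf r}$\ssf. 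I shall denote this finite value by \ssf$C_0$\ssf.

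For \ssf$|t|\!\le\!2$\ssf, where \ssf$(1\!+\!|t|)^{\tau-3}\asymp1$\ssf, it suffices to bound the operator norm by a constant, which I would do by applying Lemma~\ref{KS} to \ssf$\kappa\ssb=\ssb w_{\ssf t}^{\ssf0}$ \ssf together with the estimate \ssf$|w_{\ssf t}^{\ssf0}(r)|\lesssim\varphi_0(r)$ \ssf of Theorem~\ref{Estimatewt0}.i; the resulting bound for \ssf$\|f\ssb*\ssb w_{\ssf t}^{\ssf0}\|_{L^q}$ \ssf is \ssf$C_0^{1/Q}\,\|f\|_{L^{\tilde q'}}$\ssf, as wanted.

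For \ssf$|t|\!\ge\!2$ \ssf I would split the kernel as \ssf$w_{\ssf t}^{\ssf0}=\1_{\ssf B(0,|t|/2)}\ssf w_{\ssf t}^{\ssf0}+\1_{\,\Hn\smallsetminus B(0,|t|/2)}\ssf w_{\ssf t}^{\ssf0}$ \ssf (each piece radial) and estimate the two contributions separately, exactly as in the proof of Theorem~\ref{dispersiveinfty}. On \ssf$B(0,|t|/2)$ \ssf Theorem~\ref{Estimatewt0}.ii.a gives \ssf$|w_{\ssf t}^{\ssf0}(r)|\lesssim|t|^{\tau-3}\varphi_0(r)$\ssf, so Lemma~\ref{KS} factors out \ssf$|t|^{\tau-3}$ \ssf and leaves an integral \ssf$\le\!C_0$\ssf, which is already the asserted estimate. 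On the complement Theorem~\ref{Estimatewt0}.ii.b gives \ssf$|w_{\ssf t}^{\ssf0}(r)|\lesssim(1\!+\!|r\!-\!|t||)^{\tau-2}e^{-\rho r}$\ssf; since \ssf$\tau\!<\!2$ \ssf the factor \ssf$(1\!+\!|r\!-\!|t||)^{(\tau-2)Q}$ \ssf is \ssf$\le\!1$\ssf, so Lemma~\ref{KS} reduces matters to \ssf$\bigl(\int_{\,|t|/2}^{+\infty}dr\,(1\!+\!r)^\mu e^{-\rho(\mu+Q-2)\ssf r}\bigr)^{1/Q}$\ssf, which is \ssf$\mathrm{O}(|t|^{-\infty})$ \ssf (once more because \ssf$\mu\ssb+\ssb Q\ssb>\ssb2$\ssf), hence a fortiori \ssf$\lesssim|t|^{\tau-3}$\ssf. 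Together with the previous case, this completes the proof.

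I do not expect a genuine obstacle here: the scheme is a direct transcription of the estimates already carried out for Theorems~\ref{dispersive0} and~\ref{dispersiveinfty}, the only point requiring care being the convergence of the radial integrals, which is secured once and for all by the inequality \ssf$\mu\ssb+\ssb Q\ssb>\ssb2$ \ssf built into Lemma~\ref{KS}, while the splitting of \ssf$w_{\ssf t}^{\ssf0}$ \ssf at radius \ssf$|t|/2$ \ssf merely reflects the shape of the pointwise estimates in Theorem~\ref{Estimatewt0}.
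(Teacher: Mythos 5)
Your proof is correct and is exactly what the paper intends: the statement is issued right after the remark that ``by applying Lemma~\ref{KS} in full generality'' one obtains the decoupled estimate, and your argument (splitting at $|t|\le 2$ versus $|t|\ge2$, then at radius $|t|/2$ in the latter case, and feeding the pointwise bounds of Theorem~\ref{Estimatewt0} into Lemma~\ref{KS}) is the natural instantiation of that remark, mirroring Estimates 1 and 2 in the proof of Theorem~\ref{dispersiveinfty} and the first part of the proof of Theorem~\ref{dispersive0}. The key point you correctly identify and exploit is that the integrability of the radial integrals is guaranteed uniformly by the inequality $\mu+Q>2$, which holds whenever $q,\tilde q>2$.
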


\section{Strichartz estimates}\label{Strichartz}

We shall assume \ssf$n\!\ge\!3$ \ssf throughout this section
and discuss the 2--dimensional case in the final remark.
Consider the inhomogeneous linear wave equation on \ssf$\Hn$\,:
\begin{equation}\label{IP}
\begin{cases}
&\partial_{\ssf t}^{\ssf2}u(t,x)-(\Delta_{\Hn}\!+\ssb\rho^2)\ssf u(t,x)
=F(t,x)\ssf,\\
&u(0,x)=f(x)\ssf,\\
&\partial_{\ssf t}|_{t=0}\,u(t,x)=g(x)\ssf,
\end{cases}
\end{equation}
whose solution is given by Duhamel's formula\,:
\begin{equation*}\textstyle
u(t,x)=(\cos t\ssf D_x)\ssf f\ssf(x)
+\frac{\sin t\ssf D_x}{D_x}\ssf g\ssf(x)
+{\displaystyle\int_{\,0}^{\ssf t}}ds\,
\frac{\sin\ssf(t-s)\ssf D_x}{D_x}\ssf F(s,x)\,.
\end{equation*}

\begin{definition} \label{admcouple}
A couple \,$(p,q)$ is called \,{\rm admissible}
if \,$(\frac1p,\frac1q)$ belongs to the triangle
\begin{equation}\label{triangle}\textstyle
T_n=\bigl\{\ssf\bigl(\frac1p,\frac1q\bigr)\!\in\!
\bigl(0,\frac12\bigr]\!\times\!\bigl(0,\frac12\bigr)
\bigm|\frac2p\ssb+\ssb\frac{n-1}q\ssb\ge\ssb\frac{n-1}2\,\bigr\}
\end{equation}
{\rm(}see Figure 1\,{\rm)}.
\end{definition}

\begin{figure}[ht]
\begin{center}
\psfrag{0}[c]{$0$}
\psfrag{1}[c]{$1$}
\psfrag{1/2}[c]{$\frac12$}
\psfrag{1/2-1/(n-1)}[c]{$\frac12\!-\!\frac1{n-1}$}
\psfrag{1/p}[c]{$\frac1p$}
\psfrag{1/q}[c]{$\frac1q$}
\includegraphics[width=70mm]{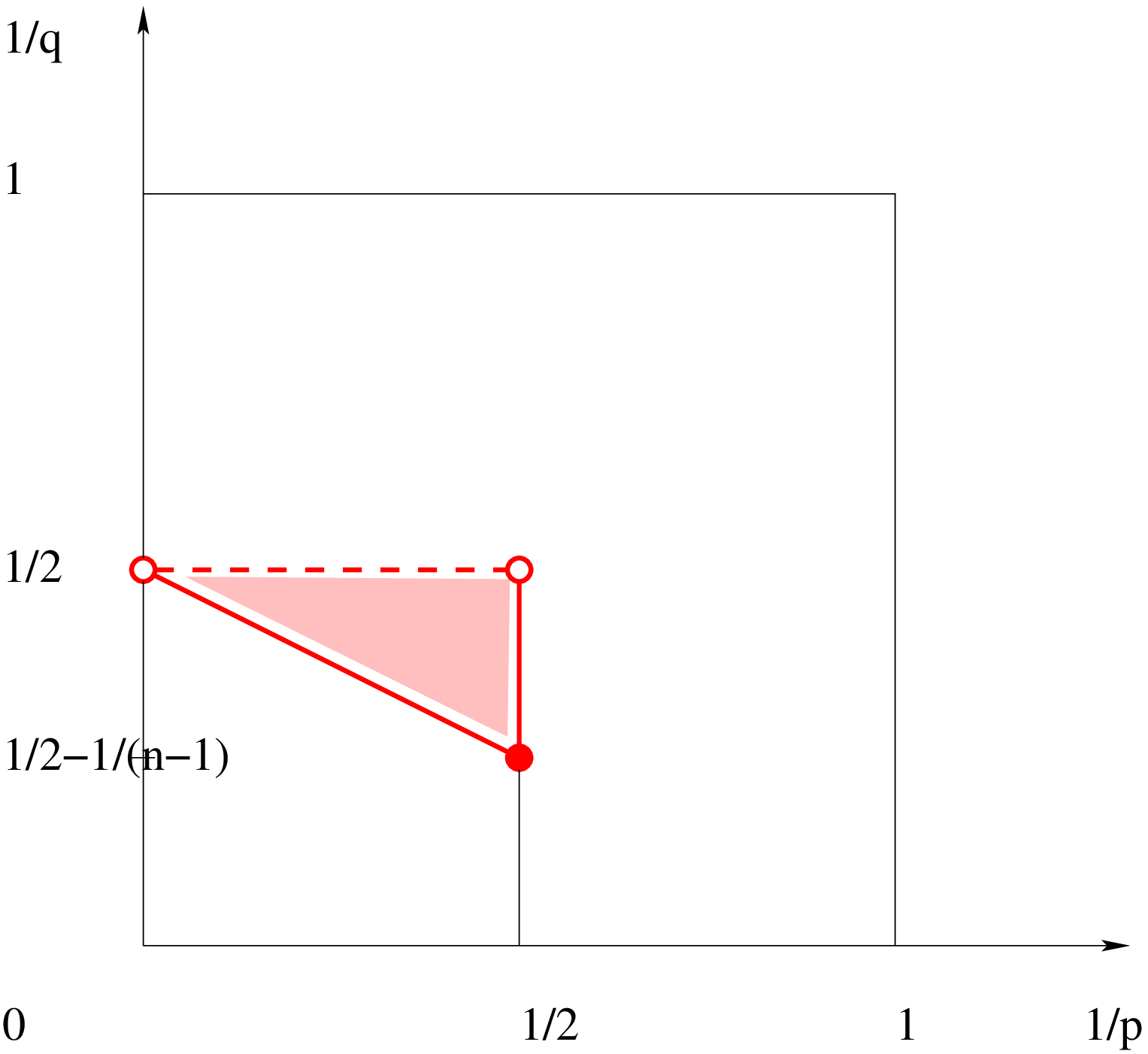}
\end{center}
\caption{Admissibility in dimension \ssf$n\!\ge\!4$}
\end{figure}

\begin{remark}\label{dim3end} 
Observe that the endpoint \,$(\frac12,\frac12\!-\!\frac1{n-1})$
is included in the triangle \,$T_n$ in dimension \,$n\ssb\ge\ssb3$
but not in dimension \,$n\ssb=\ssb3$ \,{\rm(}see Figure 2\,{\rm)}.
\end{remark}

\begin{figure}[ht]
\begin{center}
\psfrag{0}[c]{$0$}
\psfrag{1}[c]{$1$}
\psfrag{1/2}[c]{$\frac12$}
\psfrag{1/p}[c]{$\frac1p$}
\psfrag{1/q}[c]{$\frac1q$}
\includegraphics[width=70mm]{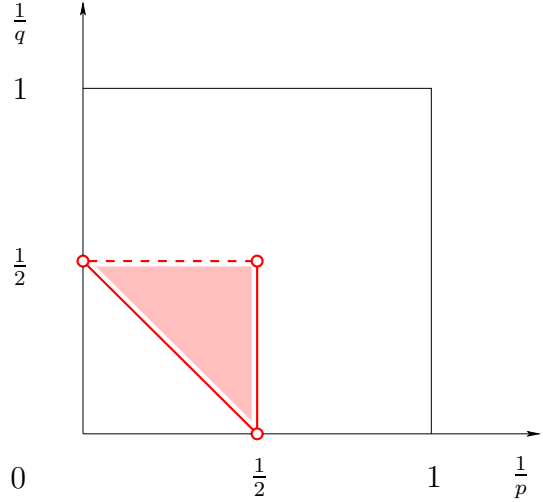}
\end{center}
\caption{Admissibility in dimension \ssf$n\!=\!3$}
\end{figure}

\begin{theorem}\label{StrichartzEstimates}
Let \,$(p,q)$ and \,$(\tilde p, \tilde q)$ be two admissible couples.
Then the following Strichartz estimate holds
for solutions to the Cauchy problem \eqref{IP}\,{\rm :}
\begin{equation}\label{Str1}
\|u\|_{L^p(\R\ssf;\ssf L^q)\vphantom{\big|}}
\lesssim\;\|f\|_{H^{\sigma-\frac12,\frac12}}
+\,\|g\|_{H^{\sigma-\frac12,-\frac12}}
+\,\|F\|_{L^{\tilde{p}'}\ssb\bigl(\R\ssf;\ssf
H_{\tilde{q}'}^{\sigma+\tilde{\sigma}-1}\bigr)}\ssf,
\end{equation}
where \,$\sigma\ssb\ge\ssb\frac{(n+1)}2\ssf\big(\frac12\!-\!\frac1q\big)$
and  \,$\tilde{\sigma}\ssb\ge\ssb
\frac{(n+1)}2\ssf\big(\frac12\!-\!\frac1{\tilde{q}}\big)$.
Moreover,
\begin{equation}\label{Str2}\begin{aligned}
&\|u\|_{L^{\infty}\bigl(\R\ssf;\ssf
H^{\sigma-\frac12,\frac12}\bigr)}
+\|\ssf\partial_{\ssf t}\ssf u\ssf\|_{L^{\infty}\bigl(\R\ssf;\ssf
H^{\sigma-\frac12,-\frac12}\bigr)}\\
&\lesssim\,\|f\|_{H^{\sigma-\frac12,\frac12}\vphantom{\big|}}
+\,\|g\|_{H^{\sigma-\frac12,-\frac12}\vphantom{\big|}}
+\,\|F\|_{L^{\tilde{p}'}\ssb\bigl(\R\ssf;\ssf
H_{\tilde{q}'}^{\sigma+\tilde{\sigma}-1}\bigr)}\,.
\end{aligned}\end{equation}
\end{theorem}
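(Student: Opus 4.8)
\textit{Reduction to the operator $W_t^{(\sigma,\frac12)}$.}
By Duhamel's formula it suffices to estimate the three pieces $(\cos tD)f$, $\frac{\sin tD}{D}g$ and $\int_0^t\frac{\sin(t-s)D}{D}F(s)\,ds$. Since $\|f\|_{H^{\sigma-\frac12,\frac12}}\ssb=\ssb\|\widetilde D^{\,\sigma-\frac12}D^{\frac12}f\|_{L^2}$ and $\|g\|_{H^{\sigma-\frac12,-\frac12}}\ssb=\ssb\|\widetilde D^{\,\sigma-\frac12}D^{-\frac12}g\|_{L^2}$, putting $h_f\ssb=\ssb\widetilde D^{\,\sigma-\frac12}D^{\frac12}f$ and $h_g\ssb=\ssb\widetilde D^{\,\sigma-\frac12}D^{-\frac12}g$ we get
\begin{equation*}
(\cos tD)\,f+\tfrac{\sin tD}{D}\,g
=\tfrac12\bigl(W_t^{(\sigma,\frac12)}+W_{-t}^{(\sigma,\frac12)}\bigr)h_f
+\tfrac1{2i}\bigl(W_t^{(\sigma,\frac12)}-W_{-t}^{(\sigma,\frac12)}\bigr)h_g\ssf,
\end{equation*}
with $\|h_f\|_{L^2}\ssb+\ssb\|h_g\|_{L^2}\lesssim\|f\|_{H^{\sigma-\frac12,\frac12}}\ssb+\ssb\|g\|_{H^{\sigma-\frac12,-\frac12}}$. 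Likewise, writing $G\ssb=\ssb\widetilde D^{\,\sigma+\tilde\sigma-1}F$, so that $\|G\|_{L^{\tilde p'}(\R;\,L^{\tilde q'})}\ssb=\ssb\|F\|_{L^{\tilde p'}(\R;\,H_{\tilde q'}^{\sigma+\tilde\sigma-1})}$, and using the identity $D^{-1}\widetilde D^{\,1-\sigma-\tilde\sigma}e^{\,i(t-s)D}\ssb=\ssb W_t^{(\sigma,\frac12)}(W_s^{(\tilde\sigma,\frac12)})^{*}$, the Duhamel term becomes $\tfrac1{2i}\int_0^t[\,W_t^{(\sigma,\frac12)}(W_s^{(\tilde\sigma,\frac12)})^{*}\ssb-\ssb W_{-t}^{(\sigma,\frac12)}(W_{-s}^{(\tilde\sigma,\frac12)})^{*}\,]G(s)\,ds$. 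Thus \eqref{Str1} follows from the homogeneous Strichartz estimate $\|W_\cdot^{(\sigma,\frac12)}h\|_{L^p(\R;\,L^q)}\lesssim\|h\|_{L^2}$ (with the critical $\sigma\ssb=\ssb\tfrac{n+1}2(\tfrac12\!-\!\tfrac1q)$) for every admissible $(p,q)$, together with the retarded bilinear estimate $\bigl\|\int_0^t W_t^{(\sigma,\frac12)}(W_s^{(\tilde\sigma,\frac12)})^{*}G(s)\,ds\bigr\|_{L^p(\R;\,L^q)}\lesssim\|G\|_{L^{\tilde p'}(\R;\,L^{\tilde q'})}$. Increasing $\sigma$ or $\tilde\sigma$ only enlarges the spaces on the right (the operator $\widetilde D$ being bounded below), so it is enough to treat the critical exponents.

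\textit{Low frequencies.}
Split $W_t^{(\sigma,\frac12)}=W_{t,0}^{(\sigma,\frac12)}+W_{t,\infty}^{(\sigma,\frac12)}$ by inserting $\chi_0(D)$ resp.\ $\chi_\infty(D)$. For the low frequencies we argue by $TT^{*}$ together with the Kunze--Stein estimate of Proposition \ref{decoupledwt0}: since $W_{t,0}^{(\sigma,\frac12)}(W_{s,0}^{(\sigma,\frac12)})^{*}=\chi_0(D)^{2}D^{-1}\widetilde D^{\,1-2\sigma}e^{\,i(t-s)D}$ is, up to the harmless cutoff $\chi_0(D)^2$, an operator of the type covered by Proposition \ref{decoupledwt0} with $\tau=1$, we obtain $\|W_{t,0}^{(\sigma,\frac12)}(W_{s,0}^{(\sigma,\frac12)})^{*}\|_{L^{q'}\to L^q}\lesssim(1\!+\!|t-s|)^{-2}$ for every $2\!<\!q\!<\!\infty$, with no constraint on $\sigma$. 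As $(1\!+\!|\cdot|)^{-2}\!\in\!L^{p/2}(\R)$ for all $p\!\ge\!2$, Young's inequality yields $\|W_{\cdot,0}^{(\sigma,\frac12)}(W_{\cdot,0}^{(\sigma,\frac12)})^{*}F\|_{L^p(\R;\,L^q)}\lesssim\|F\|_{L^{p'}(\R;\,L^{q'})}$, hence the homogeneous estimate for the low--frequency part, for \emph{all} admissible $(p,q)$ including the endpoint. The same argument, now with possibly $\tilde q\!\ne\!q$ in Proposition \ref{decoupledwt0}, bounds the low--frequency part of the bilinear term, since $(1\!+\!|\cdot|)^{-2}$ lies in every $L^{r}(\R)$ with $r\!\ge\!1$ and the retarded integral is dominated by the full one.

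\textit{High frequencies.}
The operator $W_{t,\infty}^{(\sigma,\frac12)}$ is bounded on $L^2$ (once multiplied by $\chi_\infty(D)$, $D^{-\frac12}\widetilde D^{\,\frac12-\sigma}$ behaves like $\widetilde D^{\,-\sigma}$ with $\sigma\!\ge\!0$), while interpolating in $\sigma$ within the analytic family $\widetilde W_{t,\infty}^{(\sigma,\tau)}$ of Section \ref{Dispersive}, between the $L^2\!\to\!L^2$ bound at $\Re\sigma\!=\!0$ and the dispersive bound of Theorem \ref{Estimatewtildetinfty} at $\Re\sigma\!=\!\frac{n+1}2$, gives, exactly as in Theorems \ref{dispersive0}--\ref{dispersiveinfty},
\begin{equation*}
\bigl\|\ssf W_{t,\infty}^{(\sigma,\frac12)}(W_{s,\infty}^{(\sigma,\frac12)})^{*}\ssf\bigr\|_{L^{q'}\to L^q}
\lesssim\min\bigl\{\ssf|t-s|^{-(n-1)(\frac12-\frac1q)},\,|t-s|^{-N}\ssf\bigr\}
\qquad\forall\,N\!\in\!\N
\end{equation*}
whenever $\sigma\!\ge\!\frac{n+1}2(\frac12\!-\!\frac1q)$. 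Feeding this, together with the $L^2$ bound, into the Keel--Tao machinery \cite{KT} produces the homogeneous Strichartz estimate for the high--frequency part for every $(\frac1p,\frac1q)$ in the triangle $T_n$: the admissibility condition $\frac2p\!+\!\frac{n-1}q\!\ge\!\frac{n-1}2$ is precisely what places the time kernel above into the right Lorentz space (the \emph{reverse} of the Euclidean inequality being gained from the fast decay at large $|t|$), so that a Young or Hardy--Littlewood--Sobolev estimate in the time variable suffices at every admissible $(p,q)$ except the corner $(\frac12,\frac12\!-\!\frac1{n-1})$, which belongs to $T_n$ only when $n\!\ge\!4$ and where the full endpoint Keel--Tao argument is needed, legitimate because the small--time dispersion exponent $\frac{n-1}2$ then exceeds $1$. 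The bilinear estimate follows from the factorisation $\int_\R W_t^{(\sigma,\frac12)}(W_s^{(\tilde\sigma,\frac12)})^{*}G(s)\,ds=W_\cdot^{(\sigma,\frac12)}\bigl[\int_\R(W_s^{(\tilde\sigma,\frac12)})^{*}G(s)\,ds\bigr]$, applying the homogeneous estimate to $W^{(\sigma,\frac12)}$ and its dual to $W^{(\tilde\sigma,\frac12)}$, followed by the Christ--Kiselev lemma to restore the truncation $\int_0^t$ (legitimate since $\tilde p'\!<\!p$, except in the double--endpoint case $p\!=\!\tilde p\!=\!2$, disposed of by the inhomogeneous endpoint estimate for the high--frequency part and by Proposition \ref{decoupledwt0} for the low one). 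The delicate point of the whole proof is exactly this corner, where the genuine Keel--Tao bilinear argument cannot be replaced by a mere convolution estimate; the low/high splitting is essential here, as Proposition \ref{decoupledwt0} turns the low--frequency contribution into an honestly integrable time kernel and thereby isolates the endpoint difficulty in the high--frequency part, where the $L^2$ bound and the sharp $|t|^{-\frac{n-1}2}$ dispersion are at hand.

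\textit{The energy estimates \eqref{Str2}.}
For the homogeneous part these are just the conservation law \eqref{conservationenergy} with $\tau\!=\!-\frac12$, which yields $\|u(t)\|_{H^{\sigma-\frac12,\frac12}}^2+\|\partial_tu(t)\|_{H^{\sigma-\frac12,-\frac12}}^2=\|f\|_{H^{\sigma-\frac12,\frac12}}^2+\|g\|_{H^{\sigma-\frac12,-\frac12}}^2$ for every $t$. For the Duhamel part, applying $\widetilde D^{\,\sigma-\frac12}D^{\pm\frac12}$ turns $\|u_{\text{inh}}(t)\|_{H^{\sigma-\frac12,\frac12}}$ and $\|\partial_tu_{\text{inh}}(t)\|_{H^{\sigma-\frac12,-\frac12}}$ into the $L^2$ norm of $\int_0^t(W_s^{(\tilde\sigma,\frac12)})^{*}G(s)\,ds$, up to a unitary factor, which is $\lesssim\|G\|_{L^{\tilde p'}(\R;\,L^{\tilde q'})}=\|F\|_{L^{\tilde p'}(\R;\,H_{\tilde q'}^{\sigma+\tilde\sigma-1})}$ uniformly in $t$ by the dual of the homogeneous estimate for $W^{(\tilde\sigma,\frac12)}$ combined with the Christ--Kiselev lemma. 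Finally, the two--dimensional case is treated in the concluding remark, the only change being the logarithmic loss in the small--time dispersive estimate recorded in Theorem \ref{dispersive0}.
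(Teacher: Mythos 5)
Your proof is correct and follows essentially the same route as the paper's: the $TT^*$ method built on the dispersive estimates, the Christ--Kiselev lemma to pass to the retarded operator, and a low/high frequency split with Proposition~\ref{decoupledwt0} controlling the low side and a Keel--Tao bilinear argument the high side. The only stylistic difference is that you apply the $\chi_0(D)+\chi_\infty(D)^2$ split from the outset, whereas the paper treats non-endpoint pairs directly from Corollary~\ref{DispersiveGlobal} and introduces the split only for the double-endpoint case $p=\tilde p=2$, $q\neq\tilde q$ (also, the Christ--Kiselev lemma you invoke for the energy bound \eqref{Str2} is superfluous there, since dominating $\1_{(0,t)}F$ by $F$ already gives the uniform-in-$t$ estimate).
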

\begin{proof}
Consider the operator
\begin{equation*}\textstyle
T\ssb f\ssf(t,x)=\tilde{D}_{\ssf x}^{-\sigma+\frac12}\,
\frac{e^{\ssf\pm\ssf i\,t\ssf D_x}}{\sqrt{D_x}}\ssf f\ssf(x)\ssf,
\end{equation*}
initially defined from $L^2(\Hn)$ into
$L^\infty(\R\ssf;\ssb H^{-\frac12,\ssf\frac12}(\Hn)\ssb)$,
and its formal adjoint
\begin{equation*}
T^*\ssb F\ssf(x)=\ssb\int_{-\infty}^{+\infty}\hspace{-1mm}ds\;
\textstyle\tilde{D}_{\ssf x}^{-\sigma+1/2}\,
\frac{e^{\ssf\mp\ssf i\,s\ssf D_x}}{\sqrt{D_x}}
\,F\ssf (s,x)\ssf,
\end{equation*}
initially defined from $L^1\ssb(\R\ssf;\ssb L^2\ssb(\Hn)\ssb)$
into $H^{-\frac12,\ssf\frac12}(\Hn)$.
The \ssf$TT^*$ \ssb method consists in proving first the
\ssf$L^{p'}\ssb(\R\ssf;\ssb L^{q'}\ssb(\Hn)\ssb)\ssb
\to\ssb L^p(\R\ssf;\ssb L^q(\Hn)\ssb)$
\ssf boundedness of the operator
\begin{equation*}
TT^*\ssb F\ssf(t,x)
=\ssb\int_{-\infty}^{+\infty}\hspace{-1mm}ds\;\textstyle
\tilde{D}_{\ssf x}^{-2\ssf\sigma+1}\,\frac{e^{\ssf\pm\ssf i\ssf(t-s)\ssf D_x}}{D_x}
\,F\ssf(s,x)
\end{equation*}
and of its truncated version
\begin{equation*}
\mathcal{T}\ssb F\ssf(t,x)
=\ssb\int_{-\infty}^{\,t}\hspace{-1mm}ds\;\textstyle
\tilde{D}_{\ssf x}^{-2\ssf\sigma+1}\,\frac{e^{\ssf\pm\ssf i\ssf(t-s)\ssf D_x}}{D_x}
\,F\ssf(s,x)\ssf,
\end{equation*}
for every admissible couple $(p,q)$
and for every $\sigma\!\ge\!\frac{n+1}2\bigl(\frac12\!-\!\frac1q\bigr)$,
and in decoupling next the indices.

Assume that the admissible couple \ssf$(p,q)$ \ssf
is different from the endpoint \ssf$(2,2\ssf\frac{n-1}{n-3})$\ssf.
Then we deduce from Corollary \ref{DispersiveGlobal} that
the norms \,$\|\ssf T\ssf T^*\ssb F(t,x)\ssf\|_{L_t^pL_x^q}$
and \,$\|\ssf\mathcal{T}\ssb F(t,x)\ssf\|_{L_t^pL_x^q}$
are bounded above by
\begin{equation}\label{HLS}
\Bigl\|\,\int_{\ssf0<|t-s|<1}\hspace{-1mm}ds\;
|\ssf t\ssb-\ssb s\ssf |^{-\alpha}\,
\|\ssf F(s,x)\ssf\|_{L_x^{q'}\vphantom{\big|}}\,\Bigr\|_{L_t^p}
+\;\Bigl\|\,\int_{\ssf|t-s|\ge1}\hspace{-1mm}ds\;
|\ssf t\ssb-\ssb s\ssf |^{-2}\,
\|\ssf F(s,x)\ssf\|_{L_x^{q'}\vphantom{\big|}}\,\Bigr\|_{L_t^p}\,,
\end{equation}
where $\alpha\!=\!(n\!-\!1)(\frac12\!-\!\frac1q)
\hspace{-.75mm}\in\hspace{-.75mm}(0,1)$.
On the one hand, the convolution kernel
\ssf$|\ssf t\!-\!s\ssf|^{-2}\ssf{\1}_{\ssf\{|\ssf t-s\ssf|\ge1\}}$
defines obviously
a bounded operator from \ssf $L^{p_1}\ssb(\R)$ to \ssf$L^{p_2}(\R)$,
for all \ssf$1\ssb\le\ssb p_1\!\le\ssb p_2\!\le\!\infty$\ssf,
in particular from \ssf$L^{p'}\ssb(\R)$ to \ssf$L^p(\R)$,
since \ssf$p\ssb\ge\ssb2$\ssf.
On the other hand, the convolution kernel
\,$|\ssf t\!-\!s\ssf|^{-\alpha}\,{\1}_{\,\{\ssf0<|\ssf t-s\ssf|<1\ssf\}}$
\ssf with \ssf$0\!<\!\alpha\!<\!1$
\ssf defines a bounded operator
from \ssf$L^{p_1}\ssb(\R)$ to \ssf$L^{p_2}\ssb(\R)$,
for all \ssf$1\!<\!p_1,p_2\!<\!\infty$ \ssf such that
\ssf$0\ssb\le\!\frac1{p_1}\!-\!\frac1{p_2}\!\le\!1\!-\!\alpha\ssf$,
\ssf in particular from \ssf$L^{p'}\ssb(\R)$ to \ssf$L^p(\R)$,
since \ssf$p\ssb\ge\ssb2$ \ssf and \ssf$\frac2p\!\ge\ssb\alpha$\ssf.

At the endpoint \ssf$(p,q)\!=\!(2,2\ssf\frac{n-1}{n-3})$\ssf,
we have \ssf$\alpha\ssb=\!1$\ssf.
Thus the previous argument breaks down
and is replaced by the refined analysis carried out in \cite{KT}.
Notice that the problem lies only in the first part of \eqref{HLS}
and not in the second one,
which involves an integrable convolution kernel on \ssf$\R$\ssf.

Thus \,$TT^*$ and \,$\mathcal{T}$ are bounded
from \ssf$L^{p'}\ssb(\R\ssf;\ssb L^{q'}\ssb(\Hn)\ssb)$
to \ssf$L^p(\R\ssf;\ssb L^q(\Hn)\ssb)$\ssf,
for every admissible couple $(p,q)$\ssf.
As a consequence, \,$T^*$ is bounded
from \ssf$L^{p'}\ssb(\R\ssf;\ssb L^{q'}\ssb(\Hn)\ssb)$ \ssf to \ssf$L^2(\Hn)$
\ssf and \,$T$ \ssf is bounded
from \ssf$L^2(\Hn)$ \ssf to \ssf$L^p(\R\ssf;\ssb L^q(\Hn)\ssb)$.
In particular,
\begin{equation*}\textstyle
\|\ssf(\cos t\ssf D_x)f(x)\ssf\|_{L_t^pL_x^q}
\lesssim\,\|\ssf\tilde{D}_{\,x}^{-\sigma+\frac12}D_{\,x}^{-\frac12}\ssf
e^{\ssf\pm\ssf i\,t\ssf D_x}\tilde{D}_{\,x}^{\ssf\sigma-\frac12}
D_{\ssf x}^{\frac12}\ssf f\ssf(x)\ssf\|_{L_t^pL_x^q}
\lesssim\,\|f\|_{H^{\sigma-\frac12,\frac12}}
\end{equation*}
and
\begin{equation*}\textstyle
\|\ssf\frac{\sin t\ssf D_x}{D_x}\ssf g\ssf(x)\ssf\|_{L_t^pL_x^q}
\lesssim\,\|\ssf\tilde{D}_{\,x}^{-\sigma+\frac12}D_{\,x}^{-\frac12}\ssf
e^{\ssf\pm\ssf i\,t\ssf D_x}\tilde{D}_{\,x}^{\ssf\sigma-\frac12}
D_{\,x}^{-\frac12}\ssf g\ssf(x)\ssf\|_{L_t^pL_x^q}
\lesssim\,\|g\|_{H^{\sigma-\frac12,-\frac12}}\,.
\end{equation*}

We next decouple the indices.
Let $(p,q)\ssb\ne\ssb(\tilde{p},\tilde{q})$ be two admissible couples
and let $\sigma\!\ge\!\frac{n+1}2\bigl(\frac12\!-\!\frac1q\bigr)$,
$\tilde{\sigma}\!\ge\!\frac{n+1}2\bigl(\frac12\!-\!\frac1{\tilde{q}}\bigr)$.
Since \,$T$ and \,$T^*$ are separately continuous,
the operator
\begin{equation*}
TT^*\ssb F(t,x)\ssf=\int_{-\infty}^{+\infty}\hspace{-1mm}ds\;\textstyle
\tilde{D}_{\,x}^{-\sigma-\tilde{\sigma}+1}\,
\frac{e^{\ssf\pm\ssf i\ssf(t-s)\ssf D_x}}{D_x}\,F(s,x)
\end{equation*}
is bounded from $L^{\tilde{p}'}\ssb(\R\ssf;\ssb L^{\tilde{q}'}\ssb(\Hn)\ssb)$
to $L^p(\R\ssf;\ssb L^q(\Hn)\ssb)$\ssf.
According to \cite{CK},
this result remains true for the truncated operator
\begin{equation*}
\mathcal{T}\ssb F(t,x)\,
=\int_{-\infty}^{\,t}\hspace{-1mm}ds\;\textstyle
\tilde{D}_{\,x}^{-\sigma-\tilde{\sigma}+1}\,
\frac{e^{\ssf\pm\ssf i\ssf(t-s)\ssf D_x}}{D_x}\,F(s,x)
\end{equation*}
and hence for
\begin{equation*}
\widetilde{\mathcal{T}}\ssb F(t,x)\,
=\int_{\,0}^{\ssf t}ds\;\textstyle
\tilde{D}_{\,x}^{-\sigma-\tilde{\sigma}+1}\,
\frac{\sin\ssf(t-s)\ssf D_x}{D_x}\,F(s,x)
\end{equation*}
as long as \ssf$p$ \ssf and \ssf$\tilde{p}$
\ssf are not both equal to \ssf$2$\ssf.
We handle the remaining case,
where \ssf$p\ssb=\ssb\tilde{p}\ssb=\ssb2$ \ssf and
\ssf$2\ssb<\ssb q\ssb\ne\ssb\tilde{q}\ssb\le\ssb2\ssf\frac{n-1}{n-3}$\ssf,
by combining the bilinear approach in \cite{KT}
with our previous estimates.
Specifically let us split up again
\ssf$I\!=\ssb\chi_0(D)\ssb+\ssb\chi_\infty(D)^2$,
using smooth cut--off functions, and 
\ssf$\mathcal{T}\hspace{-1mm}
=\ssb\mathcal{T}^{\ssf0}\!+\ssb\mathcal{T}^\infty$
accordingly.
On one hand, it follows from Proposition \ref{decoupledwt0} that
\begin{equation*}
\mathcal{T}^{\ssf0}\hspace{-.6mm}F(t,x)\,
=\int_{-\infty}^{\,t}\hspace{-1mm}ds\;
\chi_0(D_x)\,D_{\ssf x}^{-1}\,\tilde{D}_{\,x}^{\ssf1-\sigma-\tilde{\sigma}}\,
e^{\ssf\pm\ssf i\ssf(t-s)\ssf D_x}\,F(s,x)
\end{equation*}
is bounded from
\ssf$L^{\tilde{p}'}\ssb(\R\ssf;\ssb L^{\tilde{q}'}\ssb(\Hn)\ssb)$
\ssf to \ssf$L^p(\R\ssf;\ssb L^q(\Hn)\ssb)$\ssf,
\ssf for every \ssf$2\ssb\le\ssb p,\tilde{p}\ssb\le\ssb\infty$
\ssf and \ssf$2\ssb<\ssb q,\tilde{q}\ssb<\ssb\infty$\ssf,
in particular for \ssf$p\ssb=\ssb\tilde{p}\ssb=\ssb2$
\ssf and \ssf$2\ssb<\ssb q,\tilde{q}\ssb\le\ssb2\ssf\frac{n-1}{n-3}$\ssf.
As far as it is concerned,
the $L^2L^{\tilde{q}'}\!\to L^2L^q$ boundedness of
\begin{equation*}
\mathcal{T}^\infty\hspace{-.5mm}F(t,x)\,
=\int_{-\infty}^{\,t}\hspace{-1mm}ds\;
\chi_\infty(D_x)^2\,D_{\ssf x}^{-1}\,
\tilde{D}_{\,x}^{\ssf1-\sigma-\tilde{\sigma}}\,
e^{\ssf\pm\ssf i\ssf(t-s)\ssf D_x}\,F(s,x)
\end{equation*}
amounts to estimating the hermitian form
\begin{equation*}\begin{aligned}
\mathcal{B}^{\ssf\infty\ssb}(F,G)\ssf
=\iint_{\ssf s<t}\!ds\,dt\ssf\int_{\ssf\Hn}\!dx\;
&\chi_\infty(D_x)\,D_{\,x}^{-1/2}\ssf
\tilde{D}_{\,x}^{\ssf1/2-\tilde{\sigma}}\ssf
e^{\ssf\mp\ssf i\ssf s\ssf D_x}\ssf F(s,x)\\
\times\,
&\overline{\chi_\infty(D_x)\,D_{\,x}^{-1/2}\ssf
\tilde{D}_{\,x}^{\ssf1/2-\sigma}\ssf
e^{\ssf\mp\ssf i\ssf t\ssf D_x}\,G(t,x)}
\end{aligned}\end{equation*}
by \ssf$\|F\|_{L^2L^{\tilde{q}'}}\|G\|_{L^2L^{q'}}$.
Let us split up dyadically
\begin{equation*}
\iint_{\ssf s<t}
=\sum\limits_{j=-\infty}^{+\infty}\,
\iint_{\,2^{\ssf j}\le\ssf t-s\ssf<\ssf2^{\ssf j+1}}
\end{equation*}
and \,$\mathcal{B}^{\ssf\infty}\!
=\ssb\sum_{\ssf j=-\infty}^{\,+\infty}\mathcal{B}_{\,j}^{\ssf\infty}$
\ssf accordingly.
For every \ssf$j\!\in\!\Z$\ssf,
let us further split up
\begin{equation*}
F(s,x)=\!\sum_{k=-\infty}^{+\infty}\underbrace{
\1_{\ssf[\ssf k\ssf2^{\ssf j},\ssf(k+1)\ssf2^j)}(s)\,F(s,x)
}_{F_{\,k}^{\ssf(j)}(s,x)}
\quad\text{and}\quad
G(t,x)=\!\sum_{\ell=-\infty}^{+\infty}
\underbrace{
\1_{\ssf[\ssf\ell\ssf2^{\ssf j},\ssf(\ell+1)\ssf2^j)}(t)\,G(t,x)
}_{G_{\,\ell}^{\ssf(j)}(t,x)}\,.
\end{equation*}
Notice the orthogonality
\begin{equation*}
\|\ssf F\ssf\|_{L^2L^{\tilde{q}'}}\!
=\Bigl\{\,\sum\nolimits_{\ssf k=-\infty}^{\ssf+\infty}\ssf
\|\ssf F_{\,k}^{\ssf(j)}\ssf\|_{L^2L^{\tilde{q}'}}^{\ssf2}
\Bigr\}^{1/2}\,,\quad
\|\ssf G\ssf\|_{L^2L^{q'}}\!
=\Bigl\{\,\sum\nolimits_{\ssf\ell=-\infty}^{\ssf+\infty}\ssf
\|\,G_{\,\ell}^{\ssf(j)}\ssf\|_{L^2L^{q'}}^{\ssf2}
\Bigr\}^{1/2}
\end{equation*}
and the almost orthogonality
\begin{equation*}
\mathcal{B}_{\,j}^{\ssf\infty\ssb}(F,G\ssf)\,=\,
\sum\nolimits_{\substack{k,\ssf\ell\ssf\in\ssf\Z\\\ell-k\ssf\in\ssf\{1,2\}}}
\mathcal{B}_{\,j}^{\ssf\infty\ssb}(F_{\,k}^{\ssf(j)}\ssb,G_{\,\ell}^{\ssf(j)})\,.
\end{equation*}
We claim that
\begin{equation}\label{HermitianEstimate}
|\,\mathcal{B}_{\,j}^{\ssf\infty\ssb}(F_{\,k}^{\ssf(j)}\ssb,G_{\,\ell}^{\ssf(j)})\ssf|
\,\lesssim\,\begin{cases}
\;2^{\,\kappa(q,\tilde{q})\ssf j}\,
\|\ssf F_{\,k}^{\ssf(j)}\ssf\|_{L^2L^{\tilde{q}'}}
\|\,G_{\,\ell}^{\ssf(j)}\ssf\|_{L^2L^{q'}}
&\text{if \;}j\ssb\le\ssb0\ssf,\\
\;2^{-\infty\ssf j}\,
\|\ssf F_{\,k}^{\ssf(j)}\ssf\|_{L^2L^{\tilde{q}'}}
\|\,G_{\,\ell}^{\ssf(j)}\ssf\|_{L^2L^{q'}}
&\text{if \;}j\ssb>\ssb0\ssf,
\end{cases}\end{equation}
when \ssf$2\ssb<\ssb q,\tilde{q}\ssb\le\ssb2\,\frac{n-1}{n-3}$
\ssf and \ssf$\kappa(q,\tilde{q})\ssb
=\ssb\frac{n-1}2\ssf(\frac1q\!+\!\frac1{\tilde{q}})\ssb-\ssb\frac{n-3}2$\ssf.
These estimates will be obtained
by complex interpolation between the following cases
(see Figure 3)\,:
\begin{itemize}
\item[(a)]
\ssf$q\ssb=\ssb2$ \ssf and
\ssf$2\ssb\le\ssb\tilde{q}\ssb\le\ssb2\,\frac{n-1}{n-3}$\ssf,
\item[(b)]
\ssf$2\ssb\le\ssb q\ssb\le\ssb2\,\frac{n-1}{n-3}$
\ssf and \ssf$\tilde{q}\ssb=\ssb2$\ssf,
\item[(c)]
\ssf$2\ssb<\ssb q\ssb=\ssb\tilde{q}\ssb<\ssb\infty$\ssf.
\end{itemize}

\begin{figure}[ht]\label{Interpolation}
\begin{center}
\psfrag{0}[c]{$0$}
\psfrag{1/2}[c]{$\frac12$}
\psfrag{1/2-1/(n-1)}[c]{$\frac12\!-\!\frac1{n-1}$}
\psfrag{1/q}[c]{$\frac1q$}
\psfrag{1/tildeq}[c]{$\frac1{\tilde{q}}$}
\includegraphics[width=75mm]{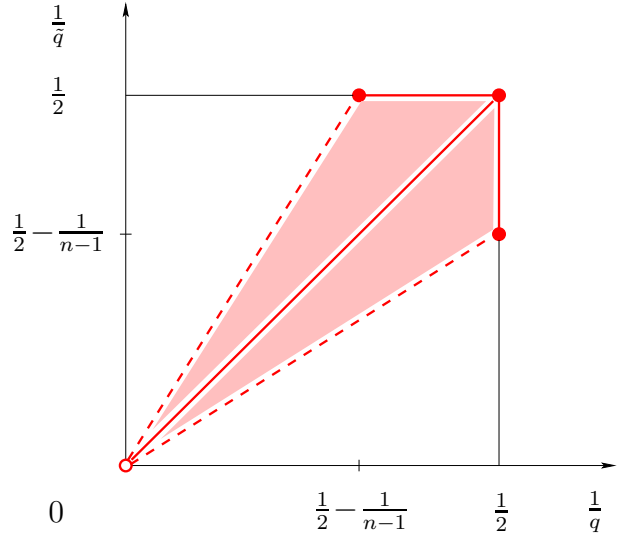}
\end{center}
\caption{Interpolation}
\end{figure}

\noindent
\textit{Case }(\textit{a}\ssf)\,:
Assume that \ssf$q\ssb=\ssb2$\ssf,
$2\ssb\le\ssb\tilde{q}\ssb\le\ssb2\,\frac{n-1}{n-3}$
\ssf and
\ssf$\Re\sigma\ssb=\ssb0$\ssf,
\ssf$\Re\tilde{\sigma}\ssb=\ssb\frac{n+1}2\ssf(\frac12\!-\!\frac1q)$\ssf.
Consider the operators
\begin{equation*}\textstyle
T^{\ssf\infty\ssb}f(t,x)
=\chi_\infty(D_x)\,\tilde{D}_{\,x}^{-\tilde{\sigma}+\frac12}\,
\frac{e^{\ssf\pm\ssf i\,t\ssf D_x}}{\sqrt{D_x}}\ssf f\ssf(x)
\end{equation*}
and
\begin{equation*}
(T^{\ssf\infty})^*F\ssf(x)
=\ssb\int_{-\infty}^{+\infty}\hspace{-1mm}ds\;\textstyle
\chi_\infty(D_x)\,\tilde{D}_{\,x}^{-\tilde{\sigma}+\frac12}\,
\frac{e^{\ssf\mp\ssf i\,s\ssf D_x}}{\sqrt{D_x}}\,F(s,x)\,.
\end{equation*}
By resuming the proof of Theorem \ref{dispersive0}
and by applying the \ssf$T^{\ssf\infty\ssb}(T^{\ssf\infty})^*$ argument,
we obtain that \ssf$(T^{\ssf\infty})^*$ is bounded
from \ssf$L^{\tilde{p}'}\!(\R\ssf;\ssb L^{\tilde{q}'}\!(\Hn)\ssb)$
\ssf to \ssf$L^2(\Hn)$\ssf,
where \ssf$\frac1{\tilde{p}}\!=\!\frac{n-1}2\ssf(\frac12\!-\!\frac1{\tilde{q}})$\ssf.
By combining this result with H\"older's inequality, we deduce that
\begin{equation*}\begin{aligned}
|\,\mathcal{B}_{\,j}^{\ssf\infty\ssb}
(F_{\,k}^{\ssf(j)}\ssb,G_{\,\ell}^{\ssf(j)})\ssf|\,
&\lesssim\,\sup_{t\in\R}\;\Bigl\|\,
\int_{\ssf t-2^{\ssf j+1}<\ssf s\ssf\le\ssf t-2^{\ssf j}}\hspace{-1mm}ds\;
\chi_\infty(D_x)\,D_{\,x}^{-\frac12}\,\tilde{D}_{\,x}^{\frac12-\tilde{\sigma}}\,
e^{\ssf\mp\ssf i\ssf s\ssf D_x}\ssf F_{\,k}^{\ssf(j)}(s,x)\,\Bigr\|_{L_x^2}\\
&\hspace{1.25mm}\times\;\bigr\|\,\chi_\infty(D_x)\,
D_{\,x}^{-\frac12}\,\tilde{D}_{\,x}^{\ssf\frac12}\,
e^{\ssf\mp\ssf i\ssf t\ssf D_x}\,
G_{\,\ell}^{\ssf(j)}(t,x)\ssf\bigr\|_{L_t^1L_x^2}\\
&\lesssim\,\sup_{t\in\R}\;\bigl\|\,
\1_{\ssf(\ssf t-2^{\ssf j+1}\ssb,\ssf t-2^{\ssf j})}(s)\,
F_{\,k}^{\ssf(j)}(s,x)\,\bigr\|_{L_s^{\tilde{p}'}\!L_x^{\tilde{q}'}}\,
\bigl\|\,G_{\,\ell}^{\ssf(j)}(t,x)\,\bigr\|_{L_t^1\!L_x^2}\\
&\lesssim\,2^{\frac j{\tilde{p}'}}\,
\|\,F_{\,k}^{\ssf(j)}\ssf\|_{L^2L^{\tilde{q}'}}\ssf
\|\,G_{\,\ell}^{\ssf(j)}\ssf\|_{L^2L^2}\,,
\end{aligned}\end{equation*}
with \ssf$\frac1{\tilde{p}'}\!=\ssb\kappa(2,\tilde{q})$\ssf.
\smallskip

\noindent
\textit{Case }(\textit{b}\ssf)\,:
If \ssf$2\ssb<\ssb q\ssb\le\ssb2\,\frac{n-1}{n-3}$\ssf,
$\tilde{q}\ssb=\ssb2$ \ssf and
\ssf$\Re\sigma\ssb=\ssb\frac{n+1}2\ssf(\frac12\ssb-\ssb\frac1q)$\ssf,
$\Re\tilde{\sigma}\ssb=\ssb0$\ssf,
we have symmetrically
\begin{equation*}
|\,\mathcal{B}_{\,j}^{\ssf\infty\ssb}
(F_{\,k}^{\ssf(j)}\ssb,G_{\,\ell}^{\ssf(j)})\ssf|\ssf
\lesssim\ssf2^{\ssf\kappa(q,2)\ssf j}\,
\|\ssf F_{\,k}^{\ssf(j)}\ssf\|_{L^2L^2}\,
\|\ssf G_{\,\ell}^{\ssf(j)}\ssf\|_{L^2L^{q'}}\ssf.
\end{equation*}

\noindent
\textit{Case }(\textit{c}\ssf)\,:
Assume that \ssf$2\ssb<\ssb q\ssb=\ssb\tilde{q}\ssb<\ssb\infty$
\ssf and \ssf$\Re\sigma\ssb=\ssb\Re\tilde{\sigma}\ssb
=\ssb\frac{n+1}2\ssf(\frac12\!-\!\frac1q)$\ssf.
Let us rewrite
\begin{equation*}\begin{aligned}
\mathcal{B}_{\,j}^{\ssf\infty\ssb}(F_{\,k}^{\ssf(j)}\ssb,G_{\,\ell}^{\ssf(j)})\,
&=\iint_{\,2^{\ssf j}\le\ssf t-s\ssf<\ssf2^{\ssf j+1}}\hspace{-1mm}ds\,dt
\ssf\int_{\ssf\Hn}\hspace{-1mm}dx\\
&\ssf\times\,\bigl\{\,\chi_\infty(D_x)^2\ssf D_{\,x}^{-1}\ssf
\tilde{D}_{\,x}^{\ssf1-\sigma-\tilde{\sigma}}\ssf
e^{\ssf\pm\,i\ssf(t-s)\ssf D_x}\ssf F_{\,k}^{\ssf(j)}(s,x)\ssf\bigr\}\;
\overline{G_{\,\ell}^{\ssf(j)}(t,x)}\,.
\end{aligned}\end{equation*}
By using the dispersive estimates
\begin{equation*}
\bigl\|\,\chi_\infty(D)^2\,D^{-1}\,\tilde{D}^{\ssf1-\sigma-\tilde{\sigma}}\,
e^{\,\pm\,i\ssf(t-s)\ssf D}\,\bigr\|_{L^{\tilde{q}'}\!\to L^{\tilde{q}}}
\lesssim\,\begin{cases}
\,(t\!-\!s)^{-(n-1)(\frac12-\frac1{\tilde{q}})}
&\text{if \;}0\ssb<\ssb t\!-\!s\ssb<\ssb2\\
\,(t\!-\!s)^{-\infty}
&\text{if \;}t\!-\!s\ssb\ge\ssb2\\
\end{cases}\end{equation*}
(see the proofs of Theorems \ref{dispersive0} and \ref{dispersiveinfty}),
we obtain
\begin{equation*}
|\,\mathcal{B}_{\,j}^{\ssf\infty\ssb}(F_{\,k}^{\ssf(j)}\ssb,
G_{\,\ell}^{\ssf(j)})\ssf|\,
\lesssim\,\begin{cases}
\;2^{-(n-1)(\frac12-\frac1q)\ssf j}\,
\|\ssf F_{\,k}^{\ssf(j)}\ssf\|_{L^1L^{q'}}\,
\|\,G_{\,\ell}^{\ssf(j)}\ssf\|_{L^1L^{q'}}
&\text{if \;}j\ssb\le\ssb0\ssf,\\
\;2^{-\infty\ssf j}\,
\|\ssf F_{\,k}^{\ssf(j)}\ssf\|_{L^1L^{q'}}\,
\|\,G_{\,\ell}^{\ssf(j)}\ssf\|_{L^1L^{q'}}
&\text{if \;}j\ssb>\ssb0\ssf.\\
\end{cases}\end{equation*}
Hence, by H\"older's inequality,
\begin{equation*}
|\,\mathcal{B}_{\,j}^{\ssf\infty\ssb}(F_{\,k}^{\ssf(j)}\ssb,
G_{\,\ell}^{\ssf(j)})\ssf|\,
\lesssim\,\begin{cases}
\;2^{\,\kappa(q,q)\ssf j}\,
\|\ssf F_{\,k}^{\ssf(j)}\ssf\|_{L^2L^{q'}}\,
\|\,G_{\,\ell}^{\ssf(j)}\ssf\|_{L^2L^{q'}}
&\text{if \;}j\ssb\le\ssb0\ssf,\\
\;2^{-\infty\ssf j}\,
\|\ssf F_{\,k}^{\ssf(j)}\ssf\|_{L^2L^{q'}}\,
\|\,G_{\,\ell}^{\ssf(j)}\ssf\|_{L^2L^{q'}}
&\text{if \;}j\ssb>\ssb0\ssf.\\
\end{cases}\end{equation*}
Our claim \eqref{HermitianEstimate} follows now
by complex interpolation between the estimates
obtained in Cases (a), (b) and (c) above.
By summing up \eqref{HermitianEstimate}
and by using H\"older's inequality,
we conclude that
\vspace{3mm}

$\displaystyle
|\,\mathcal{B}^{\ssf\infty\ssb}(F,G\ssf)\ssf|\,
\le\,\sum\nolimits_{\ssf j\in\Z}
|\,\mathcal{B}_{\,j}^{\ssf\infty\ssb}(F,G\ssf)\ssf|\,
\le\,\sum\nolimits_{\substack
{j,\ssf k,\ssf\ell\ssf\in\ssf\Z\\\ell-k\ssf\in\ssf\{1,2\}}}\!
|\,\mathcal{B}_{\,j}^{\ssf\infty\ssb}(F_{\,k}^{\ssf(j)}\ssb,
G_{\,\ell}^{\ssf(j)})\ssf|$
\vspace{-1mm}

$\displaystyle\lesssim\,
\Bigl\{\,\sum\nolimits_{\ssf j\le0}2^{\ssf\kappa(q,\tilde{q})\ssf j}
+\sum\nolimits_{\ssf j>0}2^{-\infty\ssf j}\,\Bigr\}\,
\Bigl\{\,\sum\nolimits_{\ssf k\in\Z}
\|\,F_{\,k}^{\ssf(j)}\ssf\|_{L^2L^{\tilde{q}'}}^{\,2}\Bigr\}^{1/2}\,
\Bigl\{\,\sum\nolimits_{\ssf\ell\in\Z}
\|\,G_{\,\ell}^{\ssf(j)}\ssf\|_{L^2L^{q'}}^{\,2}\Bigr\}^{1/2}$
\vspace{3mm}

$\displaystyle\lesssim\;
\|\ssf F\ssf\|_{L^2L^{\tilde{q}'}}\,\|\ssf G\ssf\|_{L^2L^{q'}}$
\vspace{3mm}

\noindent
if \ssf$2\ssb<\ssb q\ssb\ne\ssb\tilde{q}\ssb\le\ssb2\,\frac{n-1}{n-3}$\ssf.
Notice that \ssf$\kappa(q,\tilde{q})\!>\!0$ \ssf under this assumption.

Let us turn to  \eqref{Str2}.
On the one hand,
the energy estimate (\ref{conservationenergy}) yields
\begin{equation*}\begin{aligned}
&\textstyle
\big\|\,(\cos t\ssf D)f\ssb+\ssb\frac{\sin t\ssf D}D\ssf g
\,\bigr\|_{H^{\sigma-\frac12,\frac12}}
+\,\|-\ssb(\sin t\ssf D)\ssf D\ssb f\ssb+\ssb(\cos t\ssf D)\ssf g
\,\|_{H^{\sigma-\frac12,-\frac12}}\\
&\le\ssf\sqrt{2\ssf}\;\bigl\{\,\|f\|_{H^{\sigma-\frac12,\frac12}}
+ \,\|g\|_{H^{\sigma-\frac12,-\frac12}}\ssf\bigr\}
\end{aligned}\end{equation*}
for every \ssf$t\!\in\!\R$\ssf. 
On the other hand,
since \,$T^*$ is bounded
from $L^{\tilde{p}'}\ssb(\R\ssf;\ssb L^{\tilde{q}'}\ssb(\Hn)\ssb)$
to $L^2(\Hn)$,
both expressions
\begin{equation*}\textstyle
\Bigl\|\,{\displaystyle\int_{\,0}^{\ssf t}}ds\;
\frac{\sin\ssf(t-s)\ssf D_x}{D_x}\,F(s,x)
\,\Bigr\|_{H_{\ssf x}^{\sigma-\frac12,\frac12}}
=\,\Bigl\|\,{\displaystyle\int_{\,0}^{\ssf t}}ds\;
\tilde{D}_{\,x}^{\ssf\sigma-\frac12}D_{\,x}^{-\frac12}
\sin\ssf(t\!-\!s)D_x\,F(s,x)\,\Bigr\|_{L_x^2}
\end{equation*}
and
\begin{equation*}\textstyle
\Bigl\|\,{\displaystyle\int_{\,0}^{\ssf t}}ds\,
\cos\ssf(t\!-\!s)D_x\,F(s,x)
\,\Bigr\|_{H_{\ssf x}^{\sigma-\frac12,-\frac12}}
=\,\Bigl\|\,{\displaystyle\int_{\,0}^{\ssf t}}ds\;
\tilde{D}_{\,x}^{\ssf\sigma-\frac12}D_{\,x}^{-\frac12}
\cos\ssf(t\!-\!s)D_x\,F(s,x)\,\Bigr\|_{L_x^2}
\end{equation*}
are bounded above by
\begin{equation*}\begin{aligned}
&\Big\|\;e^{\ssf\pm\ssf i\,t\ssf D_x}\!
\int_{-\infty}^{+\infty}\hspace{-1mm}ds\;\textstyle
\tilde{D}_{\,x}^{-\tilde{\sigma}+\frac12}D_{\,x}^{-\frac12}\ssf
e^{\mp\ssf i\ssf s\ssf D_x}\ssf
\tilde{D}_{\,x}^{\ssf\sigma+\tilde{\sigma}-1}\,
\1_{\ssf(0,\ssf t)}(s)\,F(s,x)\,\Big\|_{L^2_x}\\
&\lesssim\,\bigl\|\,\1_{\ssf(0,\ssf t)}(s)\,
\tilde{D}_{\,x}^{\ssf\sigma+\tilde{\sigma}-1}
F(s,x)\ssf\bigr\|_{L_s^{\tilde{p}'}L_x^{\tilde{q}'}}
\lesssim\,\|\ssf F\ssf
\|_{L^{\tilde{p}'}\ssb(\R\ssf;\ssf H_{\tilde{q}'}^{\sigma+\tilde{\sigma}-1}(\Hn))}\,,
\end{aligned}\end{equation*}
uniformly in \ssf$t\!\in\!\R$\ssf.
We conclude the proof of \eqref{Str2}
by summing up the previous estimates
and by taking the supremum over \ssf$t\!\in\!\R\ssf$.
\end{proof}

\begin{remark}\label{StrichartzI}
Observe that, in the statement of Theorem \ref{StrichartzEstimates},
we may replace \,$\R$ by any time interval \,$I$ containing \,$0$\ssf.
\end{remark}

\begin{remark}\label{dim2end}
An analogous result holds in dimension \,$n\!=\!2$
and its proof is similar,
except for the first convolution kernel in \eqref{HLS},
which becomes
\begin{equation*}
|\ssf t\ssb-\ssb s\ssf|^{-\alpha}\,
(\ssf1\!-\ssb\log|\ssf t\ssb-\ssb s\ssf|\ssf)^{\ssf\beta}\,
{\1}_{\,\{\ssf0<|\ssf t-s\ssf|<1\ssf\}}\,,
\end{equation*}
with \,$\alpha\ssb=\ssb\frac12\!-\!\frac1q$
and \,$\beta\ssb=\ssb2\ssf(\frac12\!-\!\frac1q)$\ssf.
It turns out that, in this case, a couple \,$(p,q)$ is \,{\rm admissible}
if \,$(\frac1p,\frac1q)$ belongs to the region
\,$T_2\ssb=\ssb\bigl\{\ssf(\frac1p,\frac1q)\!
\in\!(0,\frac12\ssf]\!\times\!(0,\frac12)
\bigm|\frac2p\!+\!\frac1q\!>\!\frac12\,\bigr\}$
{\rm(}see Figure 4\,{\rm)}.
\end{remark}

\begin{figure}[ht]
\begin{center}
\psfrag{0}[c]{$0$}
\psfrag{1}[c]{$1$}
\psfrag{1/2}[c]{$\frac12$}
\psfrag{1/4}[c]{$\frac14$}
\psfrag{1/p}[c]{$\frac1p$}
\psfrag{1/q}[c]{$\frac1q$}
\includegraphics[width=70mm]{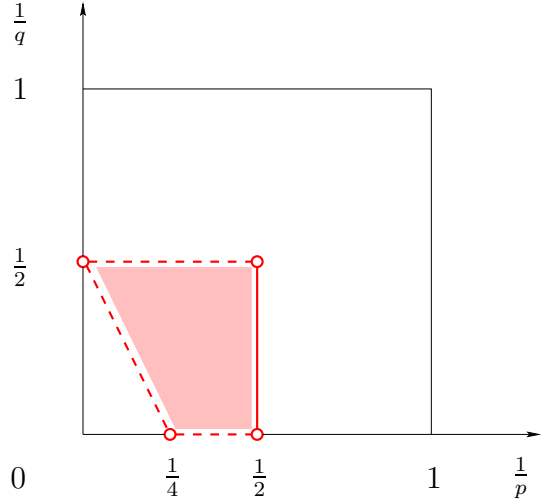}
\end{center}
\caption{Admissibility in dimension \ssf$n\!=\!2$}
\end{figure}

\section{LWP results for NLW equation on ${\Hn}$}\label{LWP}

We shall assume \ssf$n\!\ge\!4$ \ssf throughout this section
and discuss the lower dimensional cases \ssf$n\!=\!3$ \ssf and \ssf$n\!=\!2$
\ssf in the final remarks.
We apply Strichartz estimates
for the inhomogeneous linear Cauchy problem associated with the wave equation
to prove local well--posedness results for the following nonlinear  Cauchy problem
\begin{equation}\label{NLWhyperbolic}
\begin{cases}
& \partial_{\,t}^{\ssf 2} u(t,x) -(\Delta_{\Hn}\!+\!\rho^2)\,u(t,x) = F(u(t,x))\,\\
& u(0,x) = f(x)\,\\
& \partial_t|_{t=0}\,u(t,x) = g(x)\,,
\end{cases}
\end{equation}
with a power--like nonlinearity \ssf$F(u)$.
By this we mean that
\begin{equation}\label{power}
|F(u)|\le C\,|u|^\gamma
\quad\text{and}\quad
|\ssf F(u)\ssb-\ssb F(v)\ssf|\ssf\le\ssf
C\,(\ssf|u|^{\gamma-1}\ssb+\ssb|v|^{\gamma-1}\ssf)\,|\ssf u\ssb-\ssb v\ssf|
\end{equation}
for some \ssf$C\!\ge\!0$ \ssf and \ssf$\gamma\!>\!1$\ssf.
Let us recall the definition of local well--posedness.

\begin{definition}
The NLW Cauchy problem \eqref{NLWhyperbolic} is \ssf{\rm locally well--posed}
in \ssf$H^{\sigma,\tau}\!\times\ssb H^{\sigma,\tau-1}$
if, for any bounded subset \ssf$B$ of
\ssf$H^{\sigma,\tau}\!\times\ssb H^{\sigma,\tau-1}$,
there exist \ssf$T\!>\!0$ and a Banach space \ssf$X_T$,
continuously embedded into
\ssf$C\ssf(\ssf[-T,T\ssf]\ssf;H^{s,\tau})\cap
C^1(\ssf[-T,T\ssf]\ssf;H^{s,\tau-1})$\ssf,
such that
\newline
$\bullet$
\,for any initial data \ssf$(f,g)\!\in\!B$, 
$\eqref{NLWhyperbolic}$ has a unique solution \ssf$u\!\in\!X_T$,
\newline
$\bullet$
\,the map \ssf$(f,g)\ssb\mapsto\ssb u$
is continuous from \ssf$B$ into \ssf$X_T$.
\end{definition}
The amount of smoothness $\sigma$
requested for LWP of \eqref{NLWhyperbolic}
in $H^{\sigma-\frac12,\frac12}\!\times\ssb H^{\sigma-\frac12,-\frac12}$
depends on $\gamma$
and is represented in Figure 5.

\begin{figure}[ht]\label{LWPn}
\begin{center}
\psfrag{sigma}[c]{$\sigma$}
\psfrag{0}[c]{$0$}
\psfrag{1}[c]{$1$}
\psfrag{1/2}[c]{$\frac12$}
\psfrag{n/2}[c]{$\frac n2$}
\psfrag{gamma}[c]{$\gamma$}
\psfrag{gamma1}[c]{$\gamma_1$}
\psfrag{gamma2}[c]{$\gamma_2$}
\psfrag{gammaconf}[c]{$\gamma_{\text{conf}}$}
\psfrag{gammainfty}[c]{$\gamma_\infty$}
\psfrag{C1}[c]{$C_1$}
\psfrag{C2}[c]{$C_2$}
\psfrag{C3}[c]{$C_3$}
\includegraphics[width=100mm]{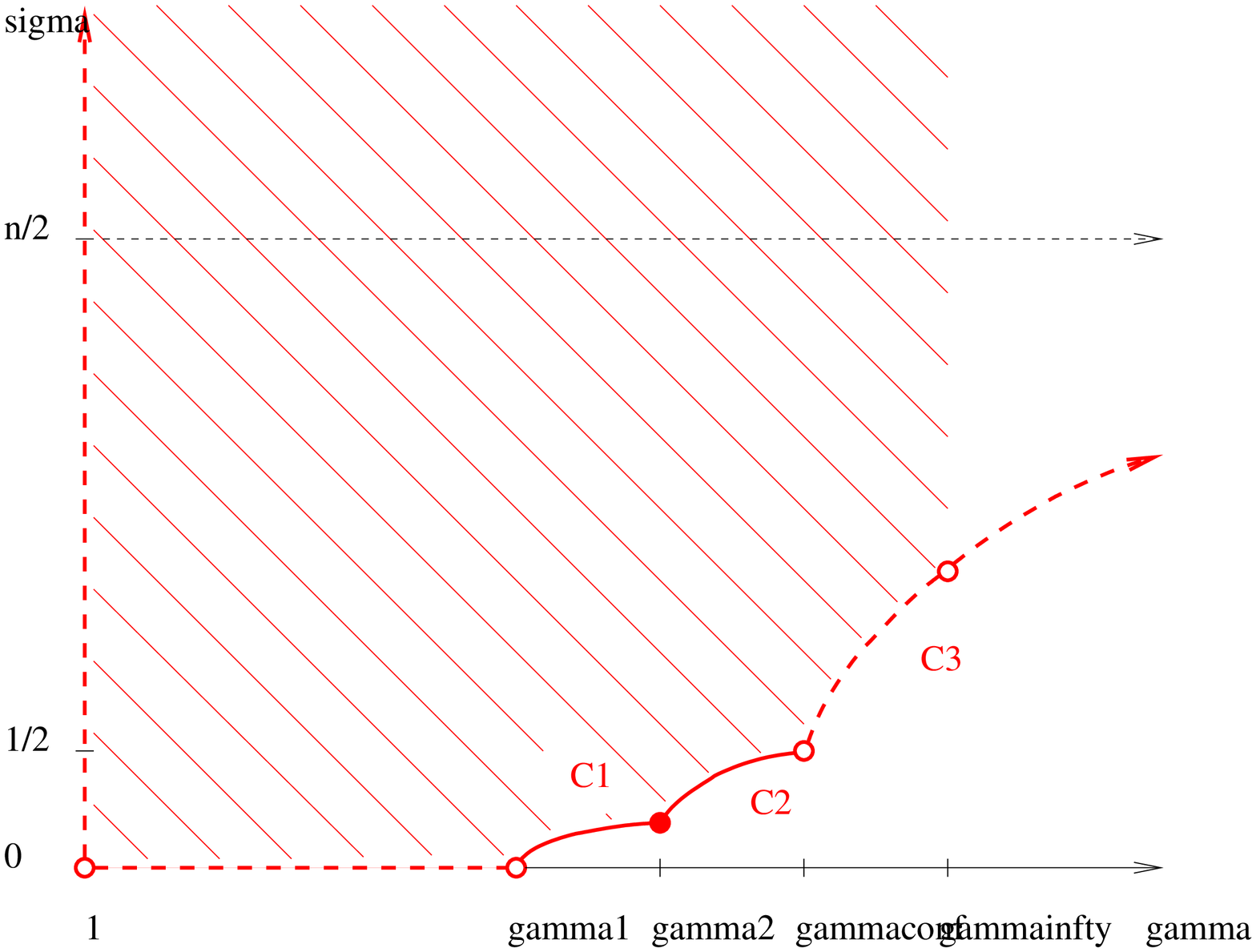}
\end{center}
\caption{Regularity in dimension $n\!\ge\!4$}
\end{figure}

\noindent
There
\vspace{2mm}

\centerline{$\hfill
\gamma_1\ssb
=\ssb\frac{n\ssf+\ssf3}n\ssb
=\ssb1\!+\ssb\frac3n\ssf,\hfill
\gamma_2\ssb
=\ssb\frac{(n+1)^2}{(n-1)^2+\ssf4}\ssb
=\ssb1\!+\ssb\frac{2}{\frac{n-1}2+\frac2{n-1}}\ssf,\hfill
\gamma_{\text{conf}}\ssb
=\ssb\frac{n\ssf+\ssf3}{n\ssf-\ssf1}\ssb
=\ssb1\!+\ssb\frac{4}{n\ssf-\ssf1}\ssf,
\hfill$}
\centerline{$
\gamma_3\ssb
=\ssb\frac{n^2+\ssf5\ssf n\ssf-\ssf2\ssf+\ssf
\sqrt{\ssf n^4+\ssf2\ssf n^3+\ssf21\ssf n^2-\ssf12\ssf n\ssf+\ssf4\ssf}}
{2\ssf n^2\ssf-\ssf2\ssf n}\ssb
=\ssb1\!+\ssb\frac{\sqrt{\ssf4\ssf n\ssf+\ssf(\frac{n-6}2-\frac2{n-1})^2}
\,-\,(\frac{n-6}2-\frac2{n-1})}n\ssf,
$}
\centerline{$\hfill
\gamma_4\ssb
=\ssb\frac{n^2+\ssf2\ssf n\ssf-\ssf5}{n^2-\ssf2\ssf n\ssf-\ssf1}\ssb
=\ssb1\!+\ssb\frac2{\frac{n-1}2-\frac1{n-1}}\ssf,\hfill
\gamma_\infty\ssb
=\min\ssf\{\gamma_3,\gamma_4\}
=\begin{cases}
\,\gamma_3
&\text{if \,}n\!=\!4,5\\
\,\gamma_4
&\text{if \,}n\!\ge\!6\\
\end{cases}
\hfill$}
\vspace{1mm}

\noindent
and the curves \ssf$C_1$, $C_2$\hspace{.1mm}, $C_3$ are given by
\vspace{2mm}

\noindent
\centerline{$\hfill
C_1(\gamma)\ssb
=\ssb\frac{n\ssf+\ssf1}4\ssf\bigl(\ssf1\ssb-\ssb\frac{n\ssf+\ssf5}
{2\,n\ssf\gamma\ssf-\ssf n\ssf-\ssf1}\ssf\bigr)\ssf,\hfill
C_2(\gamma)\ssb
=\ssb\frac{n\ssf+\ssf1}4\ssb-\ssb\frac1{\gamma\ssf-\ssf1}\ssf,
\hfill
C_3(\gamma)\ssb=\ssb\frac n2\ssb-\ssb\frac2{\gamma\ssf-\ssf1}\ssf.
\hfill$}\vspace{1mm}

\noindent
When \ssf$\gamma\!<\!\gamma_{\infty}$\ssf,
we obtain the same regularity curve as in the Euclidean case.
Since our Strichartz estimates hold for a large family of admissible pairs,
they are sufficient to study the regularity problem via a fixed point argument\,;
in the Euclidean setting this problem was solved  by different methods,
depending on the range of the power $\gamma$ involved in the nonlinearity
and on the regularity of initial data.

\begin{theorem}\label{WPL2}
Let \ssf$n\!\ge\!4$ and assume that \ssf$F(u)$ satisfies \eqref{power}.
Then the NLW \eqref{NLWhyperbolic} is locally well--posed
in \ssf$H^{\sigma-\frac12,\frac12}\!\times\ssb H^{\sigma-\frac12,-\frac12}$
in the following cases$\,:$
\begin{itemize}
\item[(A)]
\,$1\!<\!\gamma\!\le\!\gamma_1$
and \,$\sigma\!>\!0\,;$
\item[(B)]
\,$\gamma_1\!<\!\gamma\!\le\!\gamma_2$
and \,$\sigma\!\ge\!C_1(\gamma)\,;$
\item[(C)]
\,$\gamma_2\!\le\!\gamma\!<\!\gamma_{\mathrm{conf}}$
and \,$\sigma\!\ge\!C_2(\gamma)\,;$
\item[(D)]
$\gamma_{\mathrm{conf}}\!\le\!\gamma\!<\!\gamma_{\infty}$
and \,$\sigma\!>\!C_3(\gamma)\,.$
\end{itemize}
More precisely,
for all such nonlinearity power \ssf$\gamma$
and regularity  \ssf$\sigma$,
there exists a positive $T$,  depending on the initial data,
and a unique solution \ssf$u$ to NLW \eqref{NLWhyperbolic} such that
\begin{equation*}
u\in C\bigl(\ssf[-T,T\ssf]\ssf;H^{\sigma-\frac12,\frac12}(\Hn)\bigr)\,
\cap\ssf L^{p_0}\bigl(\ssf[-T,T\ssf]\ssf;L^{q_0}(\Hn))\,,
\end{equation*}
for a suitable admissible couple $(p_0,q_0)$, and
\begin{equation*}
\partial_{\ssf t}u\in C\bigl(\ssf[-T,T\ssf]\ssf;H^{\sigma-\frac12,-\frac12}(\Hn)\bigr)\,.
\end{equation*}
\end{theorem}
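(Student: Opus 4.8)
The plan is to recast the Cauchy problem \eqref{NLWhyperbolic} as a fixed point equation $u=\Phi(u)$, where, by Duhamel's formula,
\begin{equation*}\textstyle
\Phi(u)(t,x)=(\cos tD_x)f(x)+\frac{\sin tD_x}{D_x}g(x)+\int_{\,0}^{\ssf t}ds\;\frac{\sin(t-s)D_x}{D_x}F(u(s,x))\,,
\end{equation*}
and to solve it by a contraction argument. I would work in the space $X_T$ equal to the intersection of the energy space $C([-T,T];H^{\sigma-\frac12,\frac12})\cap C^1([-T,T];H^{\sigma-\frac12,-\frac12})$ with a Strichartz space $L^{p_0}([-T,T];L^{q_0})$ for an admissible couple $(p_0,q_0)$ (Definition \ref{admcouple}), chosen together with a second admissible couple $(\tilde p,\tilde q)$ and $\tilde\sigma=\frac{n+1}2(\frac12-\frac1{\tilde q})$ according to the range of $\gamma$. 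Applying the Strichartz estimates \eqref{Str1}--\eqref{Str2} of Theorem \ref{StrichartzEstimates} on $I=[-T,T]$ (Remark \ref{StrichartzI}) to the linear and Duhamel parts of $\Phi(u)$ gives
\begin{equation*}
\|\Phi(u)\|_{X_T}\,\lesssim\,\|f\|_{H^{\sigma-\frac12,\frac12}}+\|g\|_{H^{\sigma-\frac12,-\frac12}}+\|F(u)\|_{L^{\tilde p'}([-T,T];\,H_{\tilde q'}^{\sigma+\tilde\sigma-1})}
\end{equation*}
and the analogous bound for $\Phi(u)-\Phi(v)$ with $F(u)-F(v)$ on the right.

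The core step will be the nonlinear estimate on the source term. When $\sigma+\tilde\sigma-1\le0$ (cases (A)--(C)) I would invoke the Sobolev embedding $L^a(\Hn)\subset H_{\tilde q'}^{\sigma+\tilde\sigma-1}(\Hn)$ of Proposition \ref{SET} for a suitable $a$, and reduce, by the first inequality in \eqref{power}, to
\begin{equation*}
\|F(u)\|_{L^{\tilde p'}([-T,T];\,L^a)}\,\lesssim\,\bigl\||u|^\gamma\bigr\|_{L^{\tilde p'}([-T,T];\,L^a)}\,=\,\|u\|_{L^{\gamma\tilde p'}([-T,T];\,L^{\gamma a})}^{\,\gamma}\,.
\end{equation*}
The exponents $a$, $(p_0,q_0)$, $(\tilde p,\tilde q)$ are to be chosen so that $L^{\gamma\tilde p'}([-T,T];L^{\gamma a})$ is controlled by $X_T$ — either directly through the admissible pair $(p_0,q_0)$, or via a further spatial Sobolev embedding of $H^{\sigma-\frac12,\frac12}$ (Proposition \ref{SET} and the Lemma following it) — and so that Hölder's inequality in time inserts a positive power $T^\theta$. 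When $\sigma+\tilde\sigma-1>0$ (case (D)) I would instead estimate $\|F(u)\|_{H_{\tilde q'}^{\sigma+\tilde\sigma-1}}$ by a fractional Leibniz/chain rule together with Hölder and Proposition \ref{SET}, still gaining $T^\theta$ since $\sigma>C_3(\gamma)=\frac n2-\frac2{\gamma-1}$ is the scaling-subcritical condition. The difference estimate is handled the same way, using the second inequality in \eqref{power} and splitting the resulting Hölder product.

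Once the nonlinear estimates
\begin{equation*}
\|\Phi(u)\|_{X_T}\le C_0\bigl(\|f\|_{H^{\sigma-\frac12,\frac12}}+\|g\|_{H^{\sigma-\frac12,-\frac12}}\bigr)+C_0\,T^\theta\,\|u\|_{X_T}^{\,\gamma}
\end{equation*}
and
\begin{equation*}
\|\Phi(u)-\Phi(v)\|_{X_T}\le C_0\,T^\theta\,\bigl(\|u\|_{X_T}^{\,\gamma-1}+\|v\|_{X_T}^{\,\gamma-1}\bigr)\,\|u-v\|_{X_T}
\end{equation*}
with $\theta>0$ are in hand, the argument closes in the standard way: with $R=2\ssf C_0(\|f\|_{H^{\sigma-\frac12,\frac12}}+\|g\|_{H^{\sigma-\frac12,-\frac12}})$ and $T>0$ small enough (depending on the data only through $R$), $\Phi$ maps the closed ball of radius $R$ in $X_T$ into itself and is a strict contraction there. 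Its unique fixed point is the desired solution, with $u\in C([-T,T];H^{\sigma-\frac12,\frac12})\cap L^{p_0}([-T,T];L^{q_0})$ and $\partial_{\ssf t}u\in C([-T,T];H^{\sigma-\frac12,-\frac12})$, and a last application of the same estimates shows that the data-to-solution map is Lipschitz, hence continuous, on bounded sets.

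Finally I would go through the four cases (A)--(D). In each, the task is purely one of bookkeeping: to exhibit admissible couples $(p_0,q_0)$ and $(\tilde p,\tilde q)$, with $(\frac1{p_0},\frac1{q_0})$ and $(\frac1{\tilde p},\frac1{\tilde q})$ in the triangle \eqref{triangle}, together with an embedding exponent $a$, meeting simultaneously the Sobolev condition $\sigma_1-\frac n{q_1}\ge\sigma_2-\frac n{q_2}$ of Proposition \ref{SET}, the scaling identities relating $\gamma$ to the exponents, and the positivity $\theta>0$ of the time power. The ranges $1<\gamma\le\gamma_1$, $\gamma_1<\gamma\le\gamma_2$, $\gamma_2\le\gamma<\gamma_{\mathrm{conf}}$, $\gamma_{\mathrm{conf}}\le\gamma<\gamma_\infty$ and the curves $C_1,C_2,C_3$ are exactly the loci on which such a choice exists. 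I expect this constrained optimization over exponents to be the main obstacle: the transition at $\gamma=\gamma_{\mathrm{conf}}$ reflects the passage from controlling $u$ in $L^{p_0}L^{q_0}$ to controlling $\partial_{\ssf t}u$ and higher spatial regularity, while near $\gamma_\infty$ the admissibility constraint forces the strict inequality $\sigma>C_3(\gamma)$ and uses $n\ge4$ in an essential way — in dimension $n=3$ the relevant endpoint degenerates to $(2,\infty)$, which is excluded from the admissible region.
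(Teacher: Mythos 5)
Your overall strategy — Duhamel, $TT^*$/Strichartz on $[-T,T]$, Sobolev embedding to convert the source norm into an $L^{\tilde p'}L^a$ norm, then H\"older in time to extract $T^\theta$, and finally contraction — is exactly the skeleton of the paper's proof, and your comments about $n=3$ and the endpoint are also correct. The main divergence is in case (D): you propose to handle $\gamma_{\mathrm{conf}}\le\gamma<\gamma_\infty$ by estimating $\|F(u)\|_{H_{\tilde q'}^{\sigma+\tilde\sigma-1}}$ with a fractional Leibniz/chain rule when $\sigma+\tilde\sigma-1>0$, but the paper never does this. It instead imposes, for \emph{all} four cases, the single negative-index Sobolev condition $\sigma+\tilde\sigma-1\le n\bigl(\frac1{\tilde q'}-\frac1{\tilde q_1'}\bigr)\le0$ (condition \eqref{condts}.iii) and shows it can be satisfied together with admissibility. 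This is possible precisely because the admissible region on $\Hn$ is the full triangle $T_n$ of Definition \ref{admcouple} rather than the Euclidean line $\frac2p+\frac{n-1}q=\frac{n-1}2$: there is enough freedom to slide $\frac1q,\frac1{\tilde q}$ off the scaling line and keep $\sigma+\tilde\sigma-1\le0$ up to $\gamma_\infty$. That is the whole point of the theorem (as the introduction emphasizes), and your bifurcation into a more delicate positive-regularity argument for case (D) both duplicates the Euclidean technology the paper is trying to avoid and relies on a fractional Leibniz rule on $\Hn$ that is nowhere established in the paper, applied to a quantity ($\|u\|_{L^pH_q^s}$ with $s>0$) that the Strichartz estimate \eqref{Str1} does not directly control.

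The second gap is that the thresholds $\gamma_1,\gamma_2,\gamma_{\mathrm{conf}},\gamma_\infty$ and the curves $C_1,C_2,C_3$ are not inputs; they are \emph{outputs} of the exponent bookkeeping \eqref{condts}--\eqref{condq}, and that elimination is the actual content of the proof. You correctly flag it as ``the main obstacle'' but do not carry it out, so the proposal does not establish why, say, $\sigma\ge C_1(\gamma)$ suffices when $\gamma_1<\gamma\le\gamma_2$, nor why case (D) ends at $\gamma_\infty=\min\{\gamma_3,\gamma_4\}$ (which depends on $n$). Similarly, the heuristic that the transition at $\gamma_{\mathrm{conf}}$ ``reflects the passage from controlling $u$ in $L^{p_0}L^{q_0}$ to controlling $\partial_t u$ and higher spatial regularity'' does not match the paper's mechanism: in the paper nothing changes methodologically at $\gamma_{\mathrm{conf}}$; what changes is which constraint in \eqref{condq} becomes binding (specifically, the appearance of $\frac12-\frac2{\gamma(n-1)}$ as a lower bound on $\frac1q$). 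You should replace the fractional-Leibniz plan for case (D) with the same negative-Sobolev embedding used in cases (A)--(C) and then actually verify the compatibility of the system \eqref{condts}, which is where the curves come from.
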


\begin{proof}
We apply the standard fixed point method based on Strichartz estimates.
Define \,$u\!=\!\Phi(v)$ \,as the solution
to the following linear Cauchy problem
\begin{equation}\begin{cases}
\,\partial_{\,t}^{\ssf2}u(t,x)\ssb-\ssb D_x^2u(t,x)=F(v(t,x))\ssf,\\
\,u(0,x)\ssb=\ssb f(x)\ssf,\\
\,\partial_t|_{t=0}\ssf u(t,x)\ssb=\ssb g(x)\ssf,\\
\end{cases}\end{equation}
which is given by the Duhamel formula
\begin{equation*}\textstyle
u(t,x)
=(\cos t\ssf D_x)\ssf f\ssf(x)
+\frac{\sin t\ssf D_x}{D_x}\,g\ssf(x)
+{\displaystyle\int_{\,0}^{\,t}}\!ds\,\frac{\sin\ssf(t-s)\ssf D_x}{D_x}\ssf F(v(s,x))\,.
\end{equation*}
We deduce from the Strichartz estimates \eqref{Str1}, \eqref{Str2}
and from Remark \ref{StrichartzI} that
\begin{equation*}\label{StrichartzL2v1}
\begin{aligned}
&\|u\|_{L^{\infty}\big([-T,T\ssf]\ssf;\ssf H^{\sigma-\frac12,\frac12}\big)}
+\|\partial_tu\|_{L^{\infty}\big([-T,T\ssf]\ssf;
\ssf H^{\sigma-\frac12,-\frac12}\big)}
+\|u\|_{L^p([-T,T\ssf];\ssf L^q\vphantom{L_t^{\tilde p'}})}\\
&\lesssim\,\|f\|_{H^{\sigma-\frac12,\frac12}}
+\,\|g\|_{H^{\sigma-\frac12,-\frac12}}
+\,\|F(v)\|_{L^{\tilde{p}'}\ssb\bigl([-T,\ssf T\ssf]\ssf;
\ssf H^{{\sigma+\tilde{\sigma}}-1}_{\tilde{q}'}\bigr)}\,,
\end{aligned}
\end{equation*}
for all admissible couples $(p,q)$, $(\tilde{p},\tilde{q})$
introduced in Definition \ref{admcouple},
for all \ssf$\sigma\!\ge\!\frac{n+1}2\bigl(\frac12\!-\!\frac1q\bigr)$,
$\tilde{\sigma}\!\ge\!\frac{n+1}2\bigl(\frac12\!-\!\frac1{\tilde{q}}\bigr)$,
and for a positive \ssf$T$ to be determined later.
According to the  nonlinear assumption \eqref{power},
we estimate the inhomogeneous term as follows\,:
\begin{equation*}
\|F(v)\|_{L^{\tilde p'}\bigl([-T,T\ssf]\ssf;
\ssf\,H^{{\sigma+\tilde{\sigma}}-1}_{\tilde q'}\big)}
\lesssim \,\|\,|v|^\gamma\|_{L^{\tilde p'}\big([-T,T\ssf]\ssf;\ssf
H^{{\sigma+\tilde{\sigma}}-1}_{\tilde q'}\big)}\ssf.
\end{equation*}
Assuming \ssf$\sigma\ssb+\ssb\tilde{\sigma}\ssb-\!1\ssb
\le\ssb n\ssf(\frac1{\tilde{q}'}\ssb-\ssb\frac1{\tilde{q}_1'})\ssb\le\ssb0$\ssf,
we deduce from Sobolev's embedding (Proposition \ref{SET}) that
\begin{equation*}
\begin{aligned}
&\|u\|_{L^{\infty}\big([-T,T\ssf]\ssf;\ssf H^{\sigma-\frac12,\frac12}\big)}
+\,\|\partial_tu\|_{L^{\infty}\big([-T,T\ssf]\ssf;\ssf
H^{\sigma-\frac12,-\frac12}\big)}
+\,\|u\|_{L^p([-T,T\ssf]\ssf;\ssf L^q\vphantom{L_t^{\tilde p'}})}\\
&\lesssim\,\|f\|_{H^{\sigma-\frac12,\frac12}}
+\,\|g\|_{H^{\sigma-\frac12,-\frac12}}
+\,\|v\|_{L^{\tilde{p}'\gamma}\big([-T,T\ssf]\ssf;\ssf
L^{\tilde{q}_1'\ssb\gamma}\big)}^{\,\gamma}\ssf.
\end{aligned}
\end{equation*}
In order to remain within the same function space,
we require that \ssf$q\ssb=\ssb\tilde{q}_1'\gamma$\ssf.
After applying H\"older's inequality in time, we obtain
\begin{equation}\label{StrichartzL2v2}
\begin{aligned}
&\|u\|_{L^{\infty}\big([-T,T];\,H^{\sigma-\frac12,\frac12}\big)}
+\,\|\partial_tu\|_{L^{\infty}\big([-T,T];\,H^{\sigma-\frac12,-\frac12}\big)}
+\,\|u\|_{L^p([-T,T];\,L^q)\vphantom{H^{\frac12}}}\\
&\lesssim\,\|f\|_{H^{\sigma-\frac12,\frac12}}+
\,\|g\|_{H^{\sigma-\frac12,-\frac12}}
+\,T^{\ssf\lambda}\,\|v\|_{L^p([-T,T];\,L^q)\vphantom{H^{\frac12}}}^{\,\gamma}\,.
\end{aligned}
\end{equation}
Here we have assumed \ssf$p\ssb>\ssb\tilde{p}'\gamma$
\ssf and set \ssf$\lambda\ssb
=\ssb\frac1{\widetilde{p}'}\!-\!\frac\gamma p\ssb>\ssb0$\ssf.
It remains for us to check that
the following conditions can be fulfilled simultaneously\,:
\begin{equation}\label{condts}
\begin{cases}
\;\text{(i)}&
p\ssb>\ssb\tilde{p}'\gamma\,,\\
\;\text{(ii)}&
0\ssb<\ssb\frac1{\tilde{q}'}\ssb\le\ssb\frac\gamma q\ssb<\ssb1\,,\\
\;\text{(iii)}&
\frac{n-1}2\ssb-\ssb\frac{n+1}2\ssf
\bigl(\frac1q\ssb+\ssb\frac1{\tilde{q}}\bigr)\ssb
\le\ssb n\ssf\bigl(\frac1{\tilde{q}'}\ssb-\ssb\frac\gamma q\bigr)\,,\\
\;\text{(iv)}&
\frac2p\ssb+\ssb\frac{n-1}q\ssb\ge\ssb\frac{n-1}2\,,\\
\;\text{(v)}&
\frac2{\tilde{p}}\ssb+\ssb\frac{n-1}{\tilde{q}}\ssb\ge\ssb\frac{n-1}2\,,\\
\;\text{(vi)}&
\bigl(\frac1p,\frac1q\bigr)\ssb
\in\ssb\bigl(0,\frac12\bigr]\ssb
\times\ssb\bigl(\frac{n-3}{2\ssf(n-1)},\frac12\bigr)\,,\\ 
\;\text{(vii)}&
\bigl(\frac1{\tilde{p}},\frac1{\tilde{q}}\bigr)\ssb
\in\ssb\bigl(0,\frac12\bigr]\ssb
\times\ssb\bigl(\frac{n-3}{2\ssf(n-1)},\frac12\bigr)\,. 
\end{cases}
\end{equation}
Suppose indeed that there exist indices
\ssf$p,q,\tilde{p},\tilde{q}$ \ssf
satisfying all conditions in (\ref{condts}).
Then \eqref{StrichartzL2v2} shows that
\ssf$\Phi$ \ssf maps \ssf$X$ \ssf into itself,
where \ssf$X$ \ssf denotes the Banach space
\begin{equation*}\begin{aligned}
X=\bigl\{\,u\,\big|\;
&u\ssb\in\ssb
C\ssf(\ssf[-T,T\ssf]\ssf;H^{\sigma-\frac12,\frac12}(\Hn))
\ssf\cap\ssf L^p(\ssf[-T,T\ssf]\ssf;L^q(\Hn))\,,\\
&\partial_{\ssf t\ssf}u\ssb\in\ssb
C\ssf([-T,T\ssf]\ssf;H^{\sigma-\frac12,-\frac12}(\Hn))\,\bigr\}\,,
\end{aligned}\end{equation*}
equipped with the norm
\begin{equation*}
\|u\|_{X\vphantom{H^{\frac12}}}
=\,\|u\|_{L^\infty\bigl(\ssf[-T,T\ssf]\ssf;\ssf
H^{\sigma-\frac12,\frac12}\bigr)}
+\,\|\partial_{\ssf t\ssf}u\|_{L^\infty\bigl(\ssf[-T,T\ssf]\ssf;\ssf
H^{\sigma-\frac12,-\frac12}\bigr)}
+\,\|u\|_{L^p\bigl(\ssf[-T,T\ssf]\ssf;\ssf L^q\bigr)}\,,
\end{equation*}
Moreover we shall show that $\Phi$ is a contraction on the ball
\begin{equation*}
X_M=\{\,u\!\in\!X\mid\|u\|_X\!\le\!M\,\}\,,
\end{equation*}
provided the time \ssf$T\!>\!0$ \ssf is sufficiently small
and the radius \ssf$M\!>\!0$ \ssf is sufficiently large.
Let \ssf$v,\tilde{v}\!\in\!X$ \ssf
and \ssf$u\!=\!\Phi(v)$\ssf, $\tilde{u}\!=\!\Phi(\tilde{v})$\ssf.
By arguing as above and using H\"older's inequality,
we have
\begin{equation}\label{contractionL2}
\begin{aligned}
\|\,u\ssb-\ssb\tilde{u}\,\|_{X\vphantom{L_t^{\tilde{p}'}}}
&\le\,C\;\|\ssf F(v)\ssb-\ssb F(\tilde{v})\ssf\|
_{L^{\tilde{p}'}\bigl(\ssf[-T,T\ssf]\ssf;\ssf
H_{\tilde{q}'}^{\sigma+\tilde{\sigma}-1}\big)}\\
&\le\,C\;\|\ssf
\{\ssf|v|^{\gamma-1}\!+|\tilde v|^{\gamma-1}\ssf\}\,
|\ssf v\ssb-\ssb\tilde{v}\ssf|\,\|
_{L^{\tilde{p}'}\bigl(\ssf[-T,T\ssf]\ssf;\ssf L^{\tilde{q}_1'}\bigr)}\\
&\le\,C\;T^{\ssf\lambda}\,
\bigl\{\ssf\|v\|_{L^p\bigl([-T,T\ssf]\ssf;\ssf L^q\bigr)}^{\,\gamma-1}\!
+\|\tilde{v}\|_{L^p\bigl([-T,T\ssf]\ssf;\ssf L^q\bigr)}^{\,\gamma-1}\bigr\}\,
\|\ssf v\ssb-\ssb\tilde{v}\ssf\|_{L^p\bigl([-T,T\ssf]\ssf;\ssf L^q\bigr)}\\
& \le\,C\;T^{\ssf\lambda}\,
\bigl\{\ssf\|v\|_X^{\gamma-1}\!+\|\tilde{v}\|_X^{\gamma-1}\ssf\bigr\}\,
\|\ssf v\ssb-\ssb\tilde{v}\ssf\|_X\,.
\end{aligned}
\end{equation}
If \ssf$\|v\|_X\!\le\! M$ \ssf and \ssf$\|\tilde{v}\|_X\!\le\! M$\ssf,
then \eqref{StrichartzL2v2} yields on the one hand
\begin{equation*}
\|u\|_X\le\ssf C\,\bigl\{\ssf\|f\|_{H^{\sigma-\frac12,\frac12}}\ssb
+\ssf\|g\|_{H^{\sigma-\frac12,-\frac12}}\ssb
+\ssf T^{\ssf\lambda}\ssf M^{\ssf\gamma}\ssf\bigr\}
\end{equation*}
and
\begin{equation*}
\|\tilde{u}\|_X\le\ssf C\,\bigl\{\ssf\|f\|_{H^{\sigma-\frac12,\frac12}}\ssb
+\ssf\|g\|_{H^{\sigma-\frac12,-\frac12}}\ssb
+\ssf T^{\ssf\lambda}\ssf M^{\ssf\gamma}\ssf\bigr\}\ssf,
\end{equation*}
while \eqref{contractionL2} yields on the other hand
\begin{equation*}
\|u-\tilde u\|_X\le
2\;C\;T^{\lambda}\ssf M^{\gamma-1}\,\|\ssf v\ssb-\ssb\tilde{v}\ssf\|_X\,.
\end{equation*}
Thus, if we choose \ssf$M\!>\ssb0$ \ssf so large that
\ssf$\frac M2\!\ge\ssb C\,\bigl\{
\ssf\|f\|_{H^{\sigma-\frac12,\frac12}}\ssb
+\ssf\|g\|_{H^{\sigma-\frac12,-\frac12}}\bigr\}$
\ssf and \ssf$T\!>\!0$ \ssf so small that
\ssf$C\,T^{\ssf\lambda}M^{\ssf\gamma}\!\le\!\frac M2$
\ssf and \ssf$2\,C\,T^{\ssf\lambda}M^{\gamma-1}\!\le\ssb\frac12$\ssf,
then
\begin{equation*}\textstyle
\|u\|_X\le M\ssf,\;\|\tilde{u}\|_X\le M
\quad\text{and}\quad
\|\ssf u\ssb-\ssb\tilde u\ssf\|_X\le\frac12\,\|\ssf v\ssb-\ssb\tilde{v}\ssf\|_X
\end{equation*}
if \ssf$v,\tilde{v}\ssb\in\!X_M$
and \ssf$u\ssb=\ssb\Phi(v)$\ssf, $\tilde{u}\ssb=\ssb\Phi(\tilde{v})$\ssf.
Hence the map \ssf$\Phi$ \ssf is a contraction
on the complete metric space $X_M$
and the fixed point theorem allows us to conclude.

Let us eventually prove the existence
of couples $(p,q)$ and $(\tilde{p},\tilde{q})$
satisfying all conditions in \eqref{condts}.
Condition (\ref{condts}.iii) amounts to
\begin{equation}\label{cond0}\textstyle
\frac{2\ssf n\ssf\gamma\ssf-\ssf n\ssf-\ssf1}q+\frac{n-1}{\tilde{q}}\le n\ssb+\ssb1
\quad\mathrm{i.e.}\quad
\frac1{\tilde{q}}\le\frac{n+1}{n-1}\ssb-\ssb\frac{2n\gamma-n-1}{n-1}\frac1q\,.
\end{equation}
By combining \eqref{cond0} with (\ref{condts}.ii)  and (\ref{condts}.vi),
we deduce that
\begin{equation*}\textstyle
\frac{n-3}{2(n-1)}\le\frac1q\le\frac2{(\gamma-1)(n+1)}\,.
\end{equation*}
This implies that \ssf$\gamma\ssb
\le\ssb\widetilde{\gamma}_\infty\!
=\ssb\frac{n^2+2n-7}{(n+1)(n-3)}\ssb
=\ssb1\!+\ssb\frac{4(n-1)}{(n+1)(n-3)}$\ssf.
By combining \eqref{cond0} with (\ref{condts}.vii),
we obtain
\begin{equation*}\textstyle
\frac{n-3}{2(n-1)}\le\frac1{\tilde{q}}\le
\min\big\{\ssf\frac12,\frac{n+1}{n-1}\ssb-\ssb\frac{2n\gamma-n-1}{n-1}\frac1q\ssf\bigr\}\,,
\quad\frac1{\tilde{q}}\ne\frac12\,.
\end{equation*}
By combining (\ref{cond0}) with (\ref{condts}.vii),
we also obtain \ssf$\frac1q\le\frac{n+5}{2(2n\gamma-n-1)}$\ssf.
In summary, the conditions on $q$ reduce to
\begin{equation*}\textstyle
\frac{n-3}{2(n-1)}\le\frac1q\le\min\bigl\{
\frac12,
\frac1\gamma,
\frac2{(\gamma-1)(n+1)},
\frac{n+5}{2(2n\gamma-n-1)}
\bigr\}\,,\quad
\frac1q\ne\frac12,\frac1\gamma\,,
\end{equation*}
or case by case to
\begin{itemize}
\item
$1\ssb<\ssb\gamma\ssb\le\ssb\gamma_1$ \ssf and
\ssf$\frac{n-3}{2(n-1)}\ssb\le\ssb\frac1q\ssb<\ssb\frac12$\ssf,
\item
$\gamma_1\ssb<\ssb\gamma\ssb\le\ssb\gamma_2$
\ssf and \ssf$\frac{n-3}{2(n-1)}\ssb
\le\ssb\frac1q\ssb
\le\ssb\frac{n+5}{2(2n\gamma-n-1)}$\ssf,
\item
$\gamma_2\ssb<\ssb\gamma\ssb\le\ssb\widetilde{\gamma}_{\infty}$ \ssf and
\ssf$\frac{n-3}{2(n-1)}\ssb\le\ssb\frac1q\ssb\le\ssb\frac2{(\gamma-1)(n+1)}$\ssf.
\end{itemize}
Let us turn to the indices \ssf$p$ \ssf and \ssf$\tilde{p}$\ssf.
According to \eqref{condts}, we have
\begin{equation*}\textstyle
\frac{n-1}2\ssf\bigl(\frac12\ssb-\ssb\frac1q\bigr)\ssb
\le\ssb\frac1p\ssb\le\ssb\frac12
\end{equation*}
and
\begin{equation*}\textstyle
\frac{n-1}2\ssf\bigl(\frac12\ssb-\ssb\frac1{\tilde{q}}\bigr)\ssb
\le\ssb\frac1{\tilde{p}}\ssb
\le\ssb\min\ssf\bigl\{\frac12,1\ssb-\ssb\frac\gamma p\bigr\}\,,
\quad\frac1{\tilde{p}}\ssb\ne\ssb1\ssb-\ssb\frac\gamma p\,.
\end{equation*}
By taking into account the previous conditions on \ssf$q$\ssf,
we end up with the following conditions on \ssf$p$ \ssf and \ssf$\tilde{p}$\;: 
\begin{equation}\label{condindecesp}\begin{cases}
\;\mathrm{(i)}&\textstyle
\frac{n-1}2\ssf\bigl(\frac12\ssb-\ssb\frac1q\bigr)\ssb
\le\ssb\frac1p\ssb\le\ssb\min\ssf\big\{\frac12,
\frac{5-n}{4\ssf\gamma}\ssb+\ssb\frac{n-1}{2\ssf\gamma\ssf\tilde{q}}\bigr\}\,,
\quad\frac1p\ssb\ne\ssb
\frac{5-n}{4\ssf\gamma}\ssb+\ssb\frac{n-1}{2\ssf\gamma\ssf\tilde{q}}\,,\\
\;\mathrm{(ii)}&\textstyle
\frac{n-1}2\ssf\bigl(\frac12\ssb-\ssb\frac1{\tilde{q}}\bigr)\ssb
\le\ssb\frac1{\tilde{p}}\ssb
\le\ssb\min\ssf\bigl\{\frac12,1\ssb-\ssb\frac\gamma p\bigr\}\,,
\quad\frac1{\tilde{p}}\ssb\ne\ssb1\ssb-\ssb\frac\gamma p\,.\\
\end{cases}\end{equation}
There exist indices \ssf$p$ \ssf and \ssf$\tilde{p}$
\ssf which satisfy \eqref{condindecesp} provided that
\ssf$\frac1{\tilde{q}}\ssb
>\ssb\frac{\gamma}2\ssb
+\ssb\frac{n-5}{2\ssf(n-1)}\ssb
-\ssb\frac{\gamma}{q}$\ssf.
We thus have to find \ssf$\tilde{q}$ \ssf such that  
\begin{equation}\label{condtildeq}\textstyle
\max\ssf\bigl\{\frac{n-3}{2\ssf(n-1)},
\frac\gamma2\ssb+\ssb\frac{n-5}{2\ssf(n-1)}\ssb-\ssb\frac\gamma q\bigr\}
\le\frac1{\tilde{q}}
\le\min\ssf\bigl\{\frac12,\frac{n+1}{n-1}\ssb
-\ssb\frac{2\ssf n\ssf \gamma\ssf-\ssf n\ssf-1}{(n-1)\ssf q}\bigr\}\ssf,
\end{equation}
with \ssf$\frac1{\tilde{q}}\ssb\ne\ssb\frac12\ssf,\,
\frac\gamma2\ssb+\ssb\frac{n-5}{2\ssf(n-1)}\ssb-\ssb\frac\gamma q$\ssf.
This implies that \ssf$q$ \ssf has to satisfy the following conditions\,: 
\begin{equation}\label{condq}\textstyle
\max\ssf\bigl\{\frac{n-3}{2\ssf(n-1)},
\frac12\ssb-\ssb\frac2{\gamma\ssf(n-1)}\bigr\}
\le\frac1q
\le\min\ssf\bigl\{\frac12,\frac1\gamma,
\frac2{(\gamma-1)\ssf(n+1)},
\frac{n+5}{2\ssf(2n\gamma-n-1)},
\frac{n+7-\gamma\ssf(n-1)}{2\ssf(\gamma-1)\ssf(n+1)}\bigr\}\ssf,
\end{equation}
with \ssf$\frac1q\ssb
\ne\ssb\frac12\ssb-\ssb\frac2{\gamma\ssf(n-1)}\ssf,
\,\frac12\ssf,
\,\frac1\gamma\ssf,
\,\frac{n+7-\gamma\ssf(n-1)}{2\ssf(\gamma-1)\ssf(n+1)}$\ssf. 
The fact that
\ssf$\frac{n-3}{2\ssf(n-1)}\ssb
<\ssb\frac{n+7-\gamma\ssf(n-1)}{2\ssf(\gamma-1)\ssf(n+1)}$
\ssf easily implies that
\ssf$\gamma\!<\!\gamma_4\!<\tilde{\gamma}_\infty$\ssf. 
The fact that
\ssf$\frac12\ssb-\ssb\frac2{\gamma\ssf(n-1)}\ssb
<\ssb\frac{n+7-\gamma\ssf(n-1)}{2\ssf(\gamma-1)\ssf(n+1)}$
\ssf implies that \ssf$\gamma\!<\!\gamma_3$\ssf.
In summary, here are the final conditions on \ssf$q$\ssf,
depending on \ssf$\gamma$
\ssf and possibly on the dimension \ssf$n$\,:
\begin{itemize}
\item[(A)] 
\,$1\ssb<\ssb\gamma\ssb\le\ssb\gamma_1\ssb=\ssb1\ssb+\ssb\frac3n$
\ssf and
\ssf$\frac{n-3}{2\ssf(n-1)}\ssb\le\ssb\frac1q\ssb<\ssb\frac12$\ssf.
\item[(B)]
\,$\gamma_1\ssb<\ssb\gamma\ssb\le\ssb\gamma_2\ssb
=\ssb\frac{(n+1)^2}{n^2-2n+5}$
\ssf and
\ssf$\frac{n-3}{2\ssf(n-1)}\ssb\le\ssb\frac1q\ssb
\le\ssb\frac{n+5}{2\ssf(2n\gamma-n-1)}$\ssf.
\item[(C)]
\,$\gamma_2\ssb<\ssb\gamma\ssb<\ssb\gamma_{\mathrm{conf}}$
\ssf and
\ssf$\frac{n-3}{2\ssf(n-1)}\ssb\le\ssb\frac1q\ssb
\le\ssb\frac2{(\gamma-1)\ssf(n+1)}$
\ssf when \ssf$n\ssb\ge\ssb5$\ssf.\\
When \ssf$n\ssb=\ssb4$\ssf,
we distinguish two subcases\,:
\begin{itemize}
\item[$\bullet$]
\,$\gamma_2\ssb<\ssb\gamma\ssb\le\ssb2$
\ssf and
\ssf$\frac{n-3}{2\ssf(n-1)}\ssb\le\ssb\frac1q\ssb
\le\ssb\frac2{(\gamma-1)\ssf(n+1)}$\ssf,
\item[$\bullet$]
\,$2\ssb<\ssb\gamma\ssb<\ssb\gamma_{\mathrm{conf}}$
\ssf and
\ssf$\frac12\ssb-\ssb\frac2{\gamma\ssf(n-1)}\ssb
<\ssb\frac1q\ssb\le\ssb\frac2{(\gamma-1)\ssf(n+1)}$\ssf.
\end{itemize}
\item[(D)]
\,When \ssf$n\ssb\ge\ssb6$\ssf,
we distinguish two subcases\,:
\begin{itemize}
\item[$\bullet$]
\,$\gamma_{\mathrm{conf}}\ssb\le\ssb\gamma\ssb\le\ssb2$
\ssf and
\ssf$\frac{n-3}{2\ssf(n-1)}\ssb\le\ssb\frac1q\ssb
<\ssb\frac{n+7-\gamma\ssf(n-1)}{2\ssf(\gamma-1)\ssf(n+1)}$\ssf,
\item[$\bullet$]
\,$2\ssb<\ssb\gamma\ssb<\ssb\gamma_4$
\ssf and
\ssf$\frac12\ssb-\ssb\frac2{\gamma\ssf(n-1)}\ssb<\ssb\frac1q\ssb
<\ssb\frac{n+7-\gamma\ssf(n-1)}{2\ssf(\gamma-1)\ssf(n+1)}$\ssf. 
\end{itemize}
When \ssf$n\ssb=\ssb5$\ssf,
we replace \ssf$\gamma_4$ \ssf by \ssf$\gamma_3$\ssf.\\
When \ssf$n\ssb=\ssb4$\ssf,
\ssf$\gamma_{\mathrm{conf}}\ssb\le\ssb\gamma\ssb<\ssb\gamma_3$
\ssf and
\ssf$\frac12\ssb-\ssb\frac2{\gamma\ssf(n-1)}\ssb<\ssb\frac1q\ssb<\ssb\frac{n+7-\gamma\ssf(n-1)}{2\ssf(\gamma-1)\ssf(n+1)}$\ssf.
\end{itemize}

Let us now examine these cases separately.
\smallskip

\noindent
{\bf Case (A).}
In this case,
we choose successively \ssf$q$ \ssf such that
\begin{equation*}\textstyle
\frac{n-3}{2(n-1)}\le\frac1q<\frac12\,,
\end{equation*}
$\tilde q$ \ssf satisfying \eqref{condtildeq},
and \ssf$p$\ssf, $\tilde{p}$ \ssf satisfying \eqref{condindecesp}. 
Thus, when
\ssf$1\ssb<\ssb\gamma\ssb\le\gamma_1$
\ssf and \ssf$\sigma\ssb>\ssb0$\ssf,
there exists always an admissible couple $(p,q)$
such that all conditions \eqref{condts} are satisfied and
\ssf$\sigma\ssb\ge\ssb\frac{(n+1)}2\ssf(\frac12\ssb-\ssb\frac1q)$\ssf. 
 \smallskip

\noindent
{\bf{Case (B)}}.
In this case,
we choose successively \ssf$q$ \ssf such that
\begin{equation*}\textstyle
\frac{n-3}{2(n-1)}  \leq \frac{1}{q} \leq \frac{n+5}{2 ( 2n\gamma - n - 1)}
\end{equation*}
$\tilde q$ \ssf satisfying \eqref{condtildeq},
and \ssf$p$\ssf, $\tilde{p}$ \ssf satisfying \eqref{condindecesp}. 
and a correspondent $\tilde q$ which satisfies \eqref{condtildeq}.
Thus, when
\ssf$\gamma_1\ssb<\ssb\gamma\ssb\le\ssb\gamma_2$
\ssf and \ssf$\sigma\ssb
\ge\ssb\frac{n+1}4\ssb-\ssb\frac{(n+1)\ssf(n+5)}{4\ssf(2n\gamma-n-1)}$\ssf,
there exists always an admissible couple $(p,q)$ such that 
all conditions \eqref{condts} are satisfied and
\ssf$\sigma\ssb\ge\ssb\frac{(n+1)}2\ssf(\frac12\ssb-\ssb\frac1q)$\ssf.
\smallskip

\noindent
{\bf Case (C).} Assume first that \ssf$n\ssb\ge\ssb5$\ssf.
we choose successively \ssf$q$ \ssf such that
\begin{equation}\label{conditionC1}\textstyle
\frac{n-3}{2\ssf(n-1)}\le\frac1q\le\frac2{(\gamma-1)\ssf(n+1)}\,,
\end{equation}
$\tilde q$ \ssf satisfying \eqref{condtildeq},
and \ssf$p$\ssf, $\tilde{p}$ \ssf satisfying \eqref{condindecesp}. 
 
Assume next that \ssf$n\ssb=\ssb4$\ssf.
If \ssf$\gamma_2\ssb<\ssb\gamma\ssb\le\ssb2$\ssf,
we choose \ssf$q$ \ssf according to \eqref{conditionC1}.
If \ssf$2\ssb<\ssb\gamma\ssb<\ssb\gamma_{\mathrm{conf}}\ssf$,
we replace \eqref{conditionC1} by
\begin{equation*}\textstyle
\frac12\ssb-\ssb\frac2{\gamma\ssf(n-1)}\ssb<\ssb\frac1q\ssb
\le\ssb\frac2{(\gamma-1)\ssf(n+1)}\,.
\end{equation*}
In both cases,
we can choose afterwards \ssf$\tilde{q},p,\tilde{p}$ \ssf
satisfying \eqref{condtildeq} and \eqref{condindecesp}.  

In summary, when
\ssf$\gamma_2\ssb<\ssb\gamma\ssb<\ssb\gamma_{\mathrm{conf}}$
\ssf and \ssf$\sigma\ssb\ge\ssb\frac{n+1}4\ssb-\ssb\frac1{\gamma-1}$\ssf,
there exists always an admissible couple $(p,q)$
such that all conditions \eqref{condts} are satisfied
and \ssf$\sigma\ssb\ge\ssb\frac{(n+1)}2\ssf(\frac12\ssb-\ssb\frac1q)$\ssf.
\smallskip

\noindent
{\bf Case (D).}
Assume first that \ssf$n\ssb\ge\ssb6$\ssf.
If \ssf$\gamma_{\text{conf}}\ssb\le\ssb\gamma\ssb\le\ssb2$\ssf,
we choose successively \ssf$q$ \ssf such that
\begin{equation}\label{choiceD1}\textstyle
\frac{n-3}{2(n-1)}
\le\frac1q
<\frac{n+7-\gamma(n-1)}{2(\gamma-1)(n+1)}\,,
\end{equation}
$\tilde q$ \ssf satisfying \eqref{condtildeq},
and \ssf$p$\ssf, $\tilde{p}$ \ssf satisfying \eqref{condindecesp}. 
If \ssf$2\ssb<\ssb\gamma\ssb<\ssb\gamma_4$\ssf,
\eqref{choiceD1} is replaced by
\begin{equation}\label{choiceD2}\textstyle
\frac12\ssb-\ssb\frac2{\gamma(n-1)}
<\frac1q
<\frac{n+7-\gamma(n-1)}{2(\gamma-1)(n+1)}\,.
\end{equation}

Assume next that \ssf$n\ssb=\ssb5$\ssf.
We choose again \ssf$q$ \ssf according to \eqref{choiceD1}
if \ssf$\gamma_{\text{conf}}\ssb\le\ssb\gamma\ssb\le\ssb2$
\ssf and according to \eqref{choiceD2}
if \ssf$2\ssb<\ssb\gamma\ssb<\ssb\gamma_3$\ssf.
In both cases,
we can choose afterwards \ssf$\tilde{q},p,\tilde{p}$ \ssf
satisfying \eqref{condtildeq} and \eqref{condindecesp}.  

Assume eventually that \ssf$n\ssb=\ssb4$\ssf.
Then we choose \ssf$q$ \ssf according to \eqref{choiceD1}
and \ssf$\tilde{q},p,\tilde{p}$ \ssf
satisfying \eqref{condtildeq} and (\ref{condindecesp}).

In summary, when
\ssf$\gamma_{\text{conf}}\ssb\le\ssb\gamma\ssb<\ssb\gamma_{\infty}$
\ssf and \ssf$\sigma\ssb>\ssb\frac n2\ssb-\ssb\frac2{\gamma-1}$\ssf,
there exists always an admissible couple $(p,q)$ such that 
all conditions \eqref{condts} are satisfied and
\ssf$\sigma\ssb\ge\ssb\frac{n+1}2\ssf(\frac12\ssb-\ssb\frac1q)$\ssf.
\smallskip

This concludes the proof of Theorem \ref{WPL2}.
\end{proof}

\begin{remark}
Notice that, in dimension \,$n\!=\!3$\ssf,
the Strichartz estimates are available in the triangle \,$T_3$ 
without the endpoint (see Remark \ref{dim3end}).
By arguing as above, we prove that
the NLW \eqref{NLWhyperbolic} is locally well--posed
in \ssf$H^{\sigma-\frac12,\frac12}\ssb
\times\ssb H^{\sigma-\frac12,-\frac12}$ if  \,{\rm(}see Figure 6\,{\rm)}
\begin{itemize}
\item$\vphantom{\frac{\sqrt0}{\sqrt0}}$
$\,1\!<\!\gamma\ssb\le\ssb\gamma_1\!=\ssb2$
and \ssf$\sigma\ssb>\ssb0$\,{\rm;}
\item$\vphantom{\frac{\sqrt0}{\sqrt0}}$
$\,2\ssb<\!\gamma\ssb<\!\gamma_{\mathrm{conf}}\ssb=\ssb3$ and
\ssf$\sigma\ssb\ge\ssb C_2(\gamma)\ssb
=\ssb1\!-\ssb\frac1{\gamma-1}$\,{\rm;}
\item$\vphantom{\frac{\sqrt0}{\sqrt0}}$
$\,3\ssb\le\!\gamma\ssb<\!\gamma_3\ssb
=\ssb\frac{11\ssf+\ssf\sqrt{73\ssf}}6$
and \ssf$\sigma\ssb>\ssb C_3(\gamma)\ssb
=\ssb\frac32\ssb-\ssb\frac2{\gamma-1}$\ssf.  
\end{itemize}
\end{remark}

\begin{figure}[ht]\label{LWP3}
\begin{center}
\psfrag{sigma}[c]{$\sigma$}
\psfrag{0}[c]{$0$}
\psfrag{1}[c]{$1$}
\psfrag{1/2}[c]{$\frac12$}
\psfrag{3/2}[c]{$\frac32$}
\psfrag{gamma}[c]{$\gamma$}
\psfrag{gamma1=gamma2=2}[c]{$\gamma_1\!=\ssb\gamma_2\!=\ssb2$}
\psfrag{gammaconf=3}[c]{$\gamma_{\text{conf}}\ssb=\ssb3$}
\psfrag{gammainfty}[c]{$\gamma_\infty\ssb=\ssb\frac{11+\sqrt{73}}6$}
\psfrag{C2}[c]{$C_2$}
\psfrag{C3}[c]{$C_3$}
\hspace{9mm}\includegraphics[width=100mm]{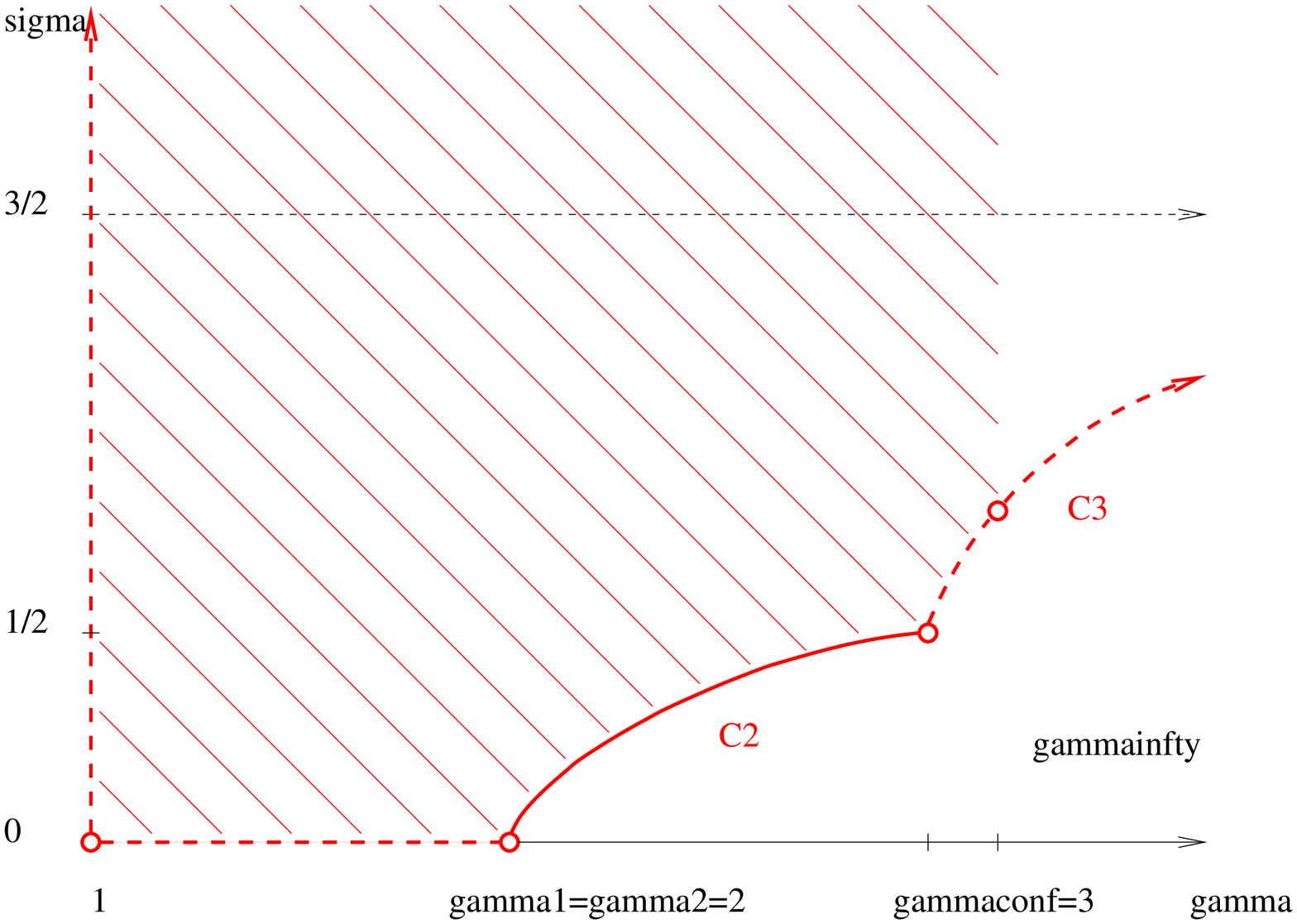}
\end{center}
\caption{Regularity in dimension $n\!=\!3$}
\end{figure}

\pagebreak

\begin{remark} In dimension \,$n\!=\!2$\ssf,
the Strichartz estimates are available in the region \,$T_2$
(see Remark \ref{dim2end}).
By following again the same line of the above proof,
we obtain that the NLW \eqref{NLWhyperbolic} is locally well--posed
in \ssf$H^{\sigma-\frac12,\frac12}\ssb
\times\ssb H^{\sigma-\frac12,-\frac12}$ if
\,{\rm(}see Figure 7\,{\rm)}
\begin{itemize}
\item$\vphantom{\frac{\sqrt0}{\sqrt0}}$
$\,1\!<\!\gamma\ssb\le\ssb2$ and
\ssf$\sigma\!>\!0$\,{\rm;}
\item$\vphantom{\frac{\sqrt0}{\sqrt0}}$
$\,2\ssb\le\!\gamma\ssb\le\ssb3$ and
\ssf$\sigma\!>\ssb\widetilde{C}_1(\gamma)\ssb
=\ssb\frac34\!-\frac32\ssf\frac1\gamma$\,{\rm;}
\item$\vphantom{\frac{\sqrt0}{\sqrt0}}$
$\,3\!<\!\gamma\!<\!\gamma_{\mathrm{conf}}\ssb=\ssb5$ and
\ssf$\sigma\!\ge\ssb C_2(\gamma)\ssb
=\ssb\frac34\!-\!\frac1{\gamma-1}$\,{\rm;}
\item$\vphantom{\frac{\sqrt0}{\sqrt0}}$
$\,5\ssb\le\!\gamma\!<\!\gamma_3\!=\ssb3\ssb+\!\sqrt{6}$ and
\ssf$\sigma\!>\ssb C_3(\gamma)\ssb=\ssb1\!-\!\frac2{\gamma-1}$\,.
\end{itemize}
\end{remark}

\begin{figure}[ht]
\begin{center}
\psfrag{sigma}[c]{$\sigma$}
\psfrag{0}[c]{$0$}
\psfrag{1}[c]{$1$}
\psfrag{1/4}[c]{$\frac14$}
\psfrag{1/2}[c]{$\frac12$}
\psfrag{2}[c]{$2$}
\psfrag{3}[c]{$3$}
\psfrag{gamma}[c]{$\gamma$}
\psfrag{gammaconf}[c]{$\gamma_{\text{conf}}\ssb=\ssb5$}
\psfrag{gammainfty}[c]{$\gamma_\infty\ssb=\ssb3\ssb+\!\sqrt{6\ssf}$}
\psfrag{Ctilde1}[c]{$\widetilde{C}_1$}
\psfrag{C2}[c]{$C_2$}
\psfrag{C3}[c]{$C_3$}
\hspace{9mm}\includegraphics[width=120mm]{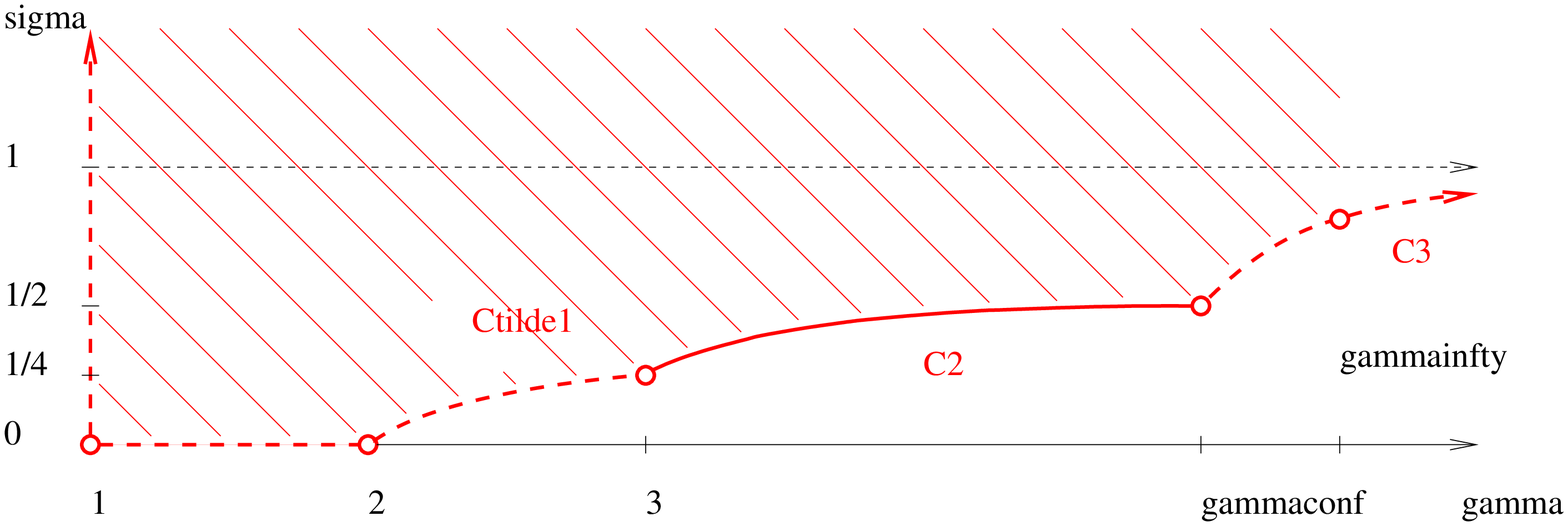}
\end{center}
\caption{Regularity in dimension $n\!=\!2$}
\end{figure}

\section*{Appendix A}\label{AppendixA}

In this appendix we collect some lemmata in Fourier analysis on $\R$
which are used for the kernel analysis in Section \ref{Kernel} and in Appendix C.
\medskip

\noindent
\textbf{Lemma A.1.}\label{LemmaA1}\textit{
Let \ssf$a$ be a compactly supported homogeneous symbol
on \ssf$\R$ of order \ssf$d\!>\!-1\ssf$.
In other words, \ssf$a$ is a smooth function on \,$\R^*$,
whose support is bounded in \ssf$\R$
and which has the following behavior at the origin\,:
\begin{equation*}
\sup_{\lambda\in\R^*}\,|\lambda|^{\,\ell-d}\,
|\,\partial_\lambda^{\,\ell}a(\lambda)\ssf|\,<+\infty
\qquad\forall\;\ell\!\in\!\N\,.
\end{equation*}
Then its Fourier transform
\begin{equation*}
k(x)=\int_{\,0}^{+\infty}\hspace{-1mm}d\lambda\,
a(\lambda)\,e^{\ssf i\ssf\lambda\ssf x}
\end{equation*}
is a smooth function on \,$\R$\ssf, 
with the following behavior at infinity:
\begin{equation*}
k(x)=\mathrm{O}\bigl(\ssf|x|^{-d-1}\ssf\bigr)
\quad\text{as \,}|x|\!\to\!\infty\,.
\end{equation*}
More precisely, let \ssf$N$ be the smallest integer $>d\!+\!1$\ssf.
Then $\exists\;C\!\ge\!0$\ssf, $\forall\,x\!\in\!\R^*$,
\begin{equation*}
|\ssf k(x)\ssf|\,\le\,C\;|x|^{-d-1}\,\sum_{\ell=0}^N\,\sup_{\lambda\in\R^*}\,
(\ssf1\!+\!|\lambda|\ssf)^{\ssf\ell-d}\,|\,\partial_\lambda^{\,\ell}a(\lambda)\ssf|\,.
\end{equation*}}

\begin{proof}
Let us split up
\begin{equation*}
a(\lambda)=\sum\nolimits_{\ssf j=-\infty}^{\ssf+\infty}\,
\chi(2^{-j}\lambda)\,a(\lambda)
\end{equation*}
and \,$k=\sum_{\ssf j=-\infty}^{\ssf+\infty}k_j$ \,accordingly,
using a homogeneous dyadic partition of unity
\begin{equation*}
1=\sum\nolimits_{\ssf j=-\infty}^{\ssf+\infty}\chi(2^{-j}\,\cdot\,)
\end{equation*}
on \ssf$(0,\infty)$\ssf.
Notice that \ssf$a_j$ hence \ssf$k_j$ vanishes for $j$ large,
since $a$ is compactly supported.
By the Leibniz formula, we obtain, for every \ssf$\ell\!\in\!\mathbb N$\ssf,
\begin{equation*}\begin{aligned}
|x|^{\ssf\ell}\,|k_j(x)|\,
&\le\int_{\ssf|\lambda|\ssf\asymp\ssf2^j}\hspace{-1mm}d\lambda\,
\bigl|\ssf\partial_\lambda^{\,\ell}\{\chi(2^{-j}\lambda)\ssf a(\lambda)\}\ssf\bigr|\\
&\lesssim\,\sum\nolimits_{\ssf k=0}^{\,\ell}2^{-k\ssf j}
\int_{\ssf|\lambda|\ssf\asymp\ssf2^j}\hspace{-1mm}d\lambda\,
|\lambda|^{\ssf d-\ell+k}\,
\lesssim\,2^{\,j\ssf(1+d-\ell)}\,.
\end{aligned}\end{equation*}
Let \ssf$N\!\in\!\N^*$ such that \ssf$N\!>\ssb d\!+\!1$\ssf.
Then
\begin{equation*}\begin{aligned}
|\ssf k(x)\ssf|\,
&\le\hspace{-1mm}\sum_{2^j\le|x|^{-1}}\hspace{-1mm}|\ssf k_j(x)\ssf|\,
+\hspace{-1mm}\sum_{2^j\ge|x|^{-1}}\hspace{-1mm}|\ssf k_j(x)\ssf|\\
&\lesssim\hspace{-1mm}\sum_{2^j\le|x|^{-1}}\hspace{-1mm}2^{\,j\ssf(1+d)}\,
+\;|x|^{-N}\hspace{-1.5mm}\sum_{2^j\ge|x|^{-1}}\hspace{-1mm}2^{\,j\ssf(1+d-N)}\,
\lesssim\,|x|^{-d-1}\,.
\end{aligned}\end{equation*}
\end{proof}

\noindent
\textbf{Lemma A.2.}\label{LemmaA2}\textit{
Let \ssf$a$ be an inhomogeneous symbol on \ssf$\R$
of order \ssf$d\!\in\!\R$\ssf.
In other words,
\ssf$a$ is a smooth function on \ssf$\R$
such that
\begin{equation*}
\sup_{\lambda\in\R}\;(\ssf1\!+\!|\lambda|\ssf)^{\ssf\ell-d}\,
|\,\partial_\lambda^{\,\ell}a(\lambda)\ssf|\,<\,+\infty
\qquad\forall\;\ell\!\in\!\N\,.
\end{equation*}
Then its Fourier transform
\begin{equation*}
k(x)\ssf={\displaystyle\int_{-\infty}^{+\infty}}\hskip-1mm
d\lambda\,a(\lambda)\,e^{\ssf i\ssf\lambda\ssf x}
\end{equation*}
is a smooth function on \ssf$\R^*$,
which has the following asymptotic behaviors\,:
\begin{itemize}
\item[(i)] At infinity,
\ssf$k(x)\ssb=\ssb\mathrm{O}\bigl(\ssf|x|^{-\infty}\bigr)$\ssf.
More precisely,
for every \ssf$N\!>\ssf d\!+\!1$\ssf,
there exists \,$C_N\!\ge\ssb0$ such that,
for every \,$x\!\in\!\R^*$,
\begin{equation*}
|\ssf k(x)\ssf|\le C_N\,|x|^{-N}
\sup_{\lambda\in\R}\;(\ssf1\!+\!|\lambda|\ssf)^{N-d}\,
|\,\partial_\lambda^{\ssf N}\ssb a(\lambda)\ssf|\,.
\end{equation*}
\item[(ii)] At the origin,
\begin{equation*}
k(x)=\begin{cases}
\mathrm{O}\ssf(1)
&\text{if \;}d\!<\!-1\ssf,\\
\mathrm{O}\ssf(\ssf\log\frac1{|x|})
&\text{if \;}d\!=\!-1\ssf,
\vphantom{\frac||}\\
\mathrm{O}\ssf(\ssf|x|^{-d-1})
&\text{if \;}d\!>\!-1\ssf.\\
\end{cases}
\end{equation*}
More precisely\,:
\\\hskip-1mm$\circ$
\,If \,$d\!<\!-1$\ssf,
then there exists \,$C\!\ge\!0$ such that,
for every \,$x\!\in\!\R$\ssf,
\begin{equation*}
|\ssf k(x)\ssf|\le C\,\sup_{\ssf\lambda\in\R}\,
(\ssf1\!+\!|\lambda|\ssf)^{-d}\,|\ssf a(\lambda)\ssf|\,.
\end{equation*}
\hskip-1mm$\circ$
\,If \,$d\!=\!-1$\ssf,
then there exists \,$C\!\ge\!0$ such that,
for every \,$0\!<\!|x|\!<\!\frac12$\ssf,
\begin{equation*}
|\ssf k(x)\ssf|\ssf\le\ssf C\,\log{\textstyle\frac1{|x|}}\;
\bigl\{\,\sup_{\ssf\lambda\in\R}\;
(\ssf1\!+\!|\lambda|\ssf)\,|\ssf a(\lambda)\ssf|\ssf
+\,\sup_{\lambda\in\R}\;
(1\!+\!|\lambda|)^2\,|\ssf a'(\lambda)\ssf|\,\bigr\}\,.
\end{equation*}
\hskip-1mm$\circ$
\,If \ssf$d\!>\!-1$\ssf,
let \ssf$N$ be the smallest integer $>d\!+\!1$\ssf.
Then there exists \,$C\!\ge\!0$ \ssf such that,
for every \,$0\!<\!|x|\!<\!1$\ssf,
\begin{equation*}
|\ssf k(x)\ssf|\ssf\le\ssf C\,|x|^{-d-1}\,\sum_{\ell=0}^{N}\;
\sup_{\ssf\lambda\in\R}\;(\ssf1\!+\!|\lambda|\ssf)^{\ssf\ell-d}\,
|\,\partial_\lambda^{\,\ell}a(\lambda)\ssf|\,.
\end{equation*}
\item[(iii)] Similar estimates hold for the derivatives
\begin{equation*}
\partial_{\ssf x}^{\,\ell}\,k(x)\ssf=
{\displaystyle\int_{-\infty}^{+\infty}}\hskip-1mm
d\lambda\,(i\ssf\lambda)^\ell\,a(\lambda)\,e^{\ssf i\ssf\lambda\ssf x}
\end{equation*}
which correspond to symbols
\,$a_\ell(\lambda)\ssb=\ssb(i\ssf\lambda)^\ell\,a(\lambda)$
of order \ssf$d\!+\!\ell$\ssf.
\end{itemize}}

\begin{proof}
(i) Since \ssf$k$ \ssf is the Fourier transform of \ssf$a$\ssf,
then \ssf$x^Nk(x)$ is the Fourier transform of
$(i\ssf\partial_{\lambda})^Na(\lambda)$\ssf,
which is \ssf$\text{O}\big((1\!+\!|\lambda|)^{d-N}\big)$\ssf,
hence integrable when \ssf$N\!>\!d\!+\!1$\ssf.  

\noindent
(ii) If \,$d\!<\!-1$\ssf, we simply estimate\,:
\begin{equation*}
|k(x)|\ssf\le\int_{-\infty}^{+\infty}\hspace{-1mm}d\lambda\,|a(\lambda)|\,
\le\,\sup_{\lambda\in\R}\,(\ssf1\!+\!|\lambda|\ssf)^{-d}\,|a(\lambda)|\,
\int_{-\infty}^{+\infty}d\lambda\,(\ssf1\!+\!|\lambda|\ssf)^{\ssf d}\,.
\end{equation*} 
If \,$d\!\ge\!-1$\ssf, we split up
\begin{equation*}
k(x)\ssf=\underbrace{\int_{-\infty}^{+\infty}\hspace{-1mm}d\lambda\,
\chi_0(|x|\lambda)\,a(\lambda)\,e^{\,i\ssf\lambda\ssf x}}_{k_0(x)}\,
+\ssf\underbrace{\int_{-\infty}^{+\infty}\hspace{-1mm}d\lambda\,
\chi_\infty(|x|\lambda)\,a(\lambda)\,e^{\,i\ssf\lambda\ssf x}}_{k_\infty(x)}\,,
\end{equation*}
using smooth cut--off functions $\chi_0$ and $\chi_\infty$ on $[0,+\infty)$
such that $1\!=\ssb\chi_0\ssb+\ssb\chi_\infty$\ssf,
$\chi_0\ssb=\!1$ on $[\ssf0,1\ssf]$ and
$\chi_\infty\!=\!1$ on $[\ssf2,+\infty)$\ssf.
The first integral is estimated as above\,:
\begin{equation*}
\begin{aligned}
|\ssf k_0(x)|\ssf
&\le\int_{\ssf|\lambda|\le\ssf2\ssf|x|^{-1}}\hspace{-1mm}d\lambda\,|a(\lambda)|\\
&\le\,2\,\sup_{\lambda\in\R}\,(1+|\lambda|)^{-d}\,|a(\lambda)|\,
\int_{\,0}^{\,2\ssf|x|^{-1}}\hspace{-1.5mm}d\lambda\,(1\!+\!\lambda)^{\ssf d}\\
&\lesssim\,
\sup_{\lambda\in\R}\,(1+|\lambda|)^{-d}\,|a(\lambda)|\,
\begin{cases}
\,\log\frac1{|x|}
&\text{if \,}d\!=\!-1\ssf,\\
\,|x|^{-d-1}
&\text{if \,}d\!>\!-1\ssf.\\
\end{cases}
\end{aligned}
\end{equation*}
After $N$ integrations by parts,
the second integral becomes
\begin{equation*}\textstyle
k_\infty(x)=\bigl(\frac ix\bigr)^N
{\displaystyle\int_{-\infty}^{+\infty}}\hspace{-1mm}d\lambda\,
\bigl(\frac\partial{\partial\lambda}\bigr)^N
\bigl\{\ssf\chi_\infty(|x|\lambda)\,a(\lambda)\bigr\}\,
e^{\,i\ssf\lambda\ssf x}\,.
\end{equation*}
Hence
\begin{equation*}\begin{aligned}
|\ssf k_\infty(x)|\,
&\lesssim\,|x|^{-N}\ssf
\int_{\ssf|\lambda|\ssf\ge\ssf|x|^{-1}}\hspace{-1mm}d\lambda\,
|\,\partial_\lambda^{\ssf N}\ssb a(\lambda)\ssf|\\
&+\,\displaystyle\sum\nolimits_{\ssf0<\ell<N}|x|^{-\ell}\ssf
\displaystyle\int_{\ssf|x|^{-1}\le\ssf|\lambda|\ssf\le\ssf2\ssf|x|^{-1}}\hspace{-1mm}d\lambda\,|\,\partial_\lambda^{\ssf\ell}a(\lambda)\ssf|\\
&\lesssim\,|x|^{-d-1}\,\sum_{\ell=1}^{N-1}\,
\sup_{\lambda\in\R}\,(1+|\lambda|)^{\ssf\ell-d}\,|a(\lambda)|\,.
\end{aligned}\end{equation*}
This concludes the proof of (ii). The proof of (iii) is similar and we omit the details. 
\end{proof}

\noindent
\textbf{Lemma A.3.}\label{LemmaA3}\textit{
Assume that
\begin{equation*}\textstyle
a(\lambda)
=\zeta\,\chi_\infty(\lambda)\,\lambda^{-m-1-i\ssf\zeta}
+\ssf b(\lambda)
\end{equation*}
where \,$m\!\in\!\mathbb N$\ssf,
\ssf$\zeta\!\in\!\R$\,,
and \,$b$ \ssf is a symbol of order \,$d\ssb<\!-\ssf m\!-\!1$\ssf.
Then
\begin{equation*}\textstyle
\partial_{\ssf x}^{\ssf m}\ssf k(x)\ssf
={\displaystyle\int_{-\infty}^{+\infty}}\hskip-1mm
d\lambda\,a(\lambda)\,(i\ssf\lambda)^m\,e^{\,i\ssf\lambda\ssf x}
\end{equation*}
is a bounded function at the origin.
More precisely,
there exists \,$C\!\ge\!0$ \ssf such that,
for every \,$0\!<\!|x|\!<\!\frac12$\ssf,
\begin{equation*}
|\,\partial_{\ssf x}^{\ssf m}\ssf k(x)\ssf|
\le C\,\bigl\{\,1+\zeta^2\ssb+\ssf\sup_{\ssf\lambda\in\R}\,
(\ssf1\!+\!|\lambda|\ssf)^{-d}\,|\ssf b(\lambda)\ssf|\,\bigr\}\,.
\end{equation*}}

\begin{proof}
Let us split up
\begin{equation*}\begin{aligned}
\partial_{\ssf x}^{\ssf m}\ssf k(x)\,
&=\,i^{\ssf m}
\overbrace{\int_{\,2}^{\frac1{|x|}}\hskip-1mm
d\lambda\;\zeta\,\lambda^{-1-i\ssf\zeta}\,e^{\,i\ssf\lambda\ssf x}
}^{k_1(x)}\,
+\;i^{\ssf m}\,\zeta\,
\overbrace{\int_{\frac1{|x|}}^{+\infty}\hskip-1mm
d\lambda\,\lambda^{-1-i\ssf\zeta}\,e^{\,i\ssf\lambda\ssf x}
\vphantom{\int^{\frac1{|x|}}}}^{k_2(x)}\\
&+\hskip1mm i^{\ssf m}\,\zeta\underbrace{
\int_{\,1}^{\ssf2}d\lambda\,\chi_\infty(\lambda)\,
\lambda^{-1-i\ssf\zeta}\,e^{\,i\ssf\lambda\ssf x}
}_{k_3(x)}\,
+\;i^{\ssf m}\underbrace{
\int_{-\infty}^{+\infty}\hskip-1mm d\lambda\,
\lambda^m\,b(\lambda)\,e^{\,i\ssf\lambda\ssf x}
}_{k_4(x)}\,.
\end{aligned}\end{equation*}
The first two terms are estimated by integrations by parts.
Specifically,
\begin{equation*}
k_1(x)\,
=\,i\,\lambda^{-i\ssf\zeta}\,e^{\,i\ssf\lambda\ssf x}\,
\Big|_{\lambda=2}^{\lambda=\frac1{|x|}}
+\,x\,\int_{\,2}^{\frac1{|x|}}\!
d\lambda\,\lambda^{-i\ssf\zeta}\,e^{\,i\ssf\lambda\ssf x}\,,
\end{equation*}
with
\;$\Bigl|\,\lambda^{-i\ssf\zeta}\,e^{\,i\ssf\lambda\ssf x}\,
\Big|_{\lambda=2}^{\lambda=\frac1{|x|}}\ssf\Bigr|\le2$
\;and
\;$\Bigl|\,{\displaystyle\int_{\,2}^{\frac1{|x|}}}\!
d\lambda\,\lambda^{-i\ssf\zeta}\,e^{\,i\ssf\lambda\ssf x}\,\Bigr|
\le\frac1{|x|}$\,,
\,while
\begin{equation*}
k_2(x)\,
=\,-\,{\textstyle\frac ix}\,\lambda^{-1-i\ssf\zeta}\,e^{\,i\ssf\lambda\ssf x}\,
\Big|_{\lambda=\frac1{|x|}}^{\lambda=+\infty}
+\,{\textstyle\frac{\zeta-i}x}\,\int_{\frac1{|x|}}^{+\infty}\!
d\lambda\,\lambda^{-2-i\ssf\zeta}\,e^{\,i\ssf\lambda\ssf x}\,,
\end{equation*}
with
\;$\Bigl|\,\lambda^{-1-i\ssf\zeta}\,e^{\,i\ssf\lambda\ssf x}\,
\Big|_{\lambda=\frac1{|x|}}^{+\infty}\ssf\Bigr|\le|x|$
\;and
\;$\Bigl|\,{\displaystyle\int_{\frac1{|x|}}^{+\infty}}\!
d\lambda\,\lambda^{-2-i\ssf\zeta}\,e^{\,i\ssf\lambda\ssf x}\,\Bigr|
\le|x|$\,.
The last two terms are easy to estimate.
Obviously \,$|\ssf k_3(x)\ssf|\le1$\ssf,
\ssf while
\begin{equation*}
|\ssf k_4(x)\ssf|\ssf\le\,\sup_{\ssf\lambda\in\R}\;
(\ssf1\!+\!|\lambda|\ssf)^{-d}\,|\ssf b(\lambda)\ssf|\,
\underbrace{\int_{-\infty}^{+\infty}\hspace{-1mm}d\lambda\,
(\ssf1\!+\!|\lambda|\ssf)^{\ssf m+d}}_{<\,+\ssf\infty}\,.
\end{equation*}
We conclude by summing up these four estimates.
\end{proof}

\section*{Appendix B}\label{AppendixB}

In this appendix we collect some properties of the Riesz distributions.
We refer to \cite[ch.\;1, \S\;3 \& ch.\;2, \S\;2]{GC}
or \cite[ch.\;III, \S\;3.2]{Ho1} for more details.
The Riesz distribution $R_{\,z}^{\ssf+}$ is defined by
\begin{equation}\label{Riesz}
\textstyle
\langle\ssf R_{\,z}^{\ssf+},\varphi\ssf\rangle\ssf
=\,\frac1{\Gamma(z)}\,
{\displaystyle\int_{\,0}^{+\infty}}\hspace{-1mm}
d\lambda\;\lambda^{z-1}\,\varphi(\lambda)
\end{equation}
when \ssf$\Re z\!>\!0$\ssf.
It extends to a holomorphic family
\ssf$\{\ssf R_{\,z}^{\ssf+}\}_{z\in\C}$
\ssf of tempered distributions on \ssf$\R$
\ssf which satisfy the following properties\,:
\begin{itemize}

\item[(i)]
\,$\lambda\ssf R_{\,z}^{\ssf+}\ssb=z\ssf R_{\ssf z+1}^{\,+}$
\;$\forall\;z\!\in\ssb\C$\ssf,

\item[(ii)]
\,$(\frac d{d\lambda})\ssf R_{\,z}^{\ssf+}\ssb=R_{\ssf z-1}^{\,+}$
\;$\forall\;z\!\in\ssb\C$\ssf,

\item[(iii)]
\,$R_{\,0}^{\ssf+}\!=\delta_{\ssf0}$
\,and more generally
\,$R_{-m}^{\ssf+}\ssb=(\frac d{d\lambda})^m\ssf\delta_{\ssf0}$
\;$\forall\;m\!\in\!\N$\ssf,

\item[(iv)]
\,$R_{\ssf z+z'}^{\,+}\ssb=R_{\,z}^{\ssf+}\!*\ssb R_{\,z'}^{\ssf+}$
\;$\forall\;z,z'\!\in\ssb\C$\ssf.
\end{itemize}
Hence
\begin{equation*}\textstyle
\langle\ssf R_{\,z}^{\ssf+},\varphi\ssf\rangle\ssf
=\ssf\langle\ssf(\frac d{d\lambda})^m\ssf R_{\ssf z+m}^{\,+}\ssf,
\varphi\ssf\rangle\ssf
=\,\frac{(-1)^m}{\Gamma(z+m)}\,
{\displaystyle\int_{\,0}^{+\infty}}\hspace{-1mm}
d\lambda\;\lambda^{z+m-1}\,
\bigl(\frac d{d\lambda}\bigr)^m\varphi(\lambda)
\end{equation*}
when \ssf$\Re z\!>\!-m$\ssf.
The Riesz distribution
\ssf$R_{\,z}^{\ssf-}\!=\ssb(R_{\,z}^{\ssf+})^{\vee}$
is defined similarly.
Their Fourier transforms are given by
\begin{itemize}

\item[(v)]
\,$\mathcal{F}R_{\,z}^{\ssf\pm}\ssb
=e^{\ssf\pm\ssf i\frac\pi2z}\,
(\ssf x\ssb\pm\ssb i\ssf0\ssf)^{-z}$
\;$\forall\;z\!\in\!\C$\ssf,
\end{itemize}
\vspace{.5mm}

\noindent
where
\begin{equation*}\textstyle
\langle\ssf(x\ssb\pm\ssb i\ssf0)^z,\varphi\ssf\rangle\ssf
=\ssf\lim_{\ssf\varepsilon\searrow0}{\displaystyle\int_{\ssf\R}}
dx\;(x\ssb\pm\ssb i\ssf\varepsilon)^z\,\varphi(x)
\end{equation*}
when \,$\Re z\!>\!-1$ and 
\begin{equation*}\textstyle
(x\ssb\pm\ssb i\ssf0)^z
=\ssf\Gamma(z\!+\!1)\,\{\ssf R_{z+1}^{\,+}\hspace{-.5mm}
+\ssb e^{\ssf\pm\ssf i\ssf\pi z}\ssf R_{z+1}^{\,-}\ssf\}
\end{equation*}
in general
(notice that there are actually no singularities in the last expression).

\section*{Appendix C}\label{AppendixC}

In this appendix we prove the local kernel estimates
\begin{equation}
|\,\widetilde{w}_{\,t}^{\ssf\infty}(r)\ssf|\ssf
\lesssim\ssf\begin{cases}
\,|t|^{-\frac{n-1}2}
&\text{if \;}n\ssb\ge\ssb3\\
\,|t|^{-\frac12}\ssf(\ssf1\!-\ssb\log|t|\ssf)
&\text{if \;}n\ssb=\ssb2\\
\end{cases}\end{equation}
stated in Theorem \ref{Estimatewtildetinfty}.i.a
under the assumptions \ssf$0\!<\!|t|\!\le\!2$\ssf, $0\!\le\!r\!\le\!3$
\ssf and $\Re\sigma\!=\!\frac{n+1}2$.
By symmetry, we may assume again that \ssf$t\!>\!0$\ssf.
\smallskip

\noindent$\bullet$
\,\textit{Case 1}\,:
\,Assume that \ssf$r\!\le\!\frac t2$\ssf.
\smallskip

\noindent
By using the first integral representation
of the spherical functions in \eqref{intrepr},
we obtain
\begin{equation}\label{WaveKernel1}\textstyle
\widetilde{w}_{\,t}^{\ssf\infty}(r)
=\frac{e^{\ssf\sigma^2}}{\Gamma(-\ssf i\Im\sigma)}\,
{\displaystyle\int_K}dk\;e^{-\rho\,\text{H}(a_{-r}k)}
{\displaystyle\int_{\,1}^{\ssf\infty}}\!d\lambda\,
\chi_\infty(\lambda)\,a(\lambda)\,
e^{\,i\ssf\lambda\ssf\{\ssf t-\text{H}(a_{-r}k)\}}\,,
\end{equation} 
where
\begin{equation*}\textstyle
a(\lambda)\ssf=\,
|\mathbf{c}\hspace{.1mm}(\lambda)|^{-2}\,\lambda^{-\tau}\,
(\lambda^2\hskip-.75mm+\!{\tilde\rho}^{\ssf2})^{\frac\tau2-\frac\sigma2}\,.
\end{equation*}
According to Lemma A.2 in Appendix A,
since \ssf$\chi_\infty\ssf a$ \ssf is a symbol of order \ssf$\frac{n-3}2$
\ssf and
\begin{equation*}\textstyle
|\,t\ssb-\ssb\text{H}(a_rk)\ssf|\ssf\ge\ssf t\ssb-\ssb r\ssf\ge\frac t2\,,
\end{equation*}
the inner integral in \eqref{WaveKernel1} is
\begin{equation*}
\text{O}\ssf\bigl(\,|\sigma|^N\,|\,
t\ssb-\ssb\text{H}(a_{-r}k)|^{-\frac{n-1}2}\ssf\bigr)
=\ssf \text{O}\ssf\bigl(\,|\sigma|^N\,t^{\ssf-\frac{n-1}2}\ssf\bigr)\,,
\end{equation*}
where \ssf$N$ is the smallest integer $>\frac{n-1}2$\ssf.
Hence
\begin{equation*}
|\,\widetilde{w}_{\,t}^{\ssf\infty}(r)\ssf|\,
\lesssim\,t^{\ssf-\frac{n-1}2}\,.
\end{equation*}
 
\noindent$\bullet$
\textit{Case 2}\,:
\,Assume that \ssf$\frac t2\!<\!r\!<\!t$\ssf.
\smallskip

\noindent
By using the third integral formula for spherical functions in \eqref{intrepr},
we are lead to estimate the expression
\begin{equation}\label{WaveKernel2}
(\sinh r)^{2-n}
{\displaystyle\int_{-r}^{+r}}\!du\,
(\cosh r\!-\ssb\cosh u)^{\frac{n-3}2}
{\displaystyle\int_{\,1}^{\ssf\infty}}\!d\lambda\,
\chi_\infty(\lambda)\,a(\lambda)\,e^{\,i\ssf\lambda\ssf(t-u)}\,.
\end{equation}
Let us expand
\begin{equation*}
|\mathbf{c}\hspace{.1mm}(\lambda)|^{-2}\,\lambda^{-\tau}\,
(\lambda^2\hskip-.75mm+\!{\tilde\rho}^{\ssf2})^{\frac{\tau-\sigma}2}
=\ssf\const\ssf\lambda^{\frac{n-3}2\ssf-\,i\ssf\Im\sigma}
+\text{O}\bigl(\ssf|\sigma|\ssf\lambda^{\frac{n-5}2}\bigr)\,,
\end{equation*}
\vspace{1mm}
as \ssf$\lambda\!\to\!+\infty$\ssf,
and
\vspace{-5mm}
\begin{equation*}
a(\lambda)
=\ssf\overbrace{
\const\ssf\lambda^{\frac{n-3}2-\,i\ssf\Im\sigma}
}^{\widetilde{a}\ssf(\lambda)}
+\,b(\lambda)
\end{equation*}
accordingly.
Since \ssf$\chi_\infty\ssf b$ \ssf is a symbol of order \ssf$\frac{n-5}2$\ssf,
its contribution to \eqref{WaveKernel2} can be estimated by
\begin{equation}\label{Estimate1}
|\sigma|^N\,(\sinh r)^{2-n}
{\displaystyle\int_{-r}^{+r}}\!du\,
(\cosh r\!-\ssb\cosh u)^{\frac{n-3}2}
\,(t\!-\!u)^{-\frac{n-3}2}\,.
\end{equation}
Here we have applied again Lemma A.2
and \ssf$N$ is the smallest integer $>\frac{n-1}2$\ssf.
By using
\begin{equation*}\begin{cases}
\;\sinh r\asymp r\,,\\
\;\cosh r\!-\ssb\cosh u
=2\,\sinh\frac{r-u}2\,\sinh\frac{r+u}2
\asymp(r\!-\!u)\,(r\!+\!u)\,,\\
\;t\!-\!u\ge r\!-\!u\,,\\
\end{cases}\end{equation*}
we end up with the estimate
\begin{equation*}\textstyle
|\sigma|^N\,r^{\ssf2-n}
{\displaystyle\int_{-r}^{+r}}\!du\,
(r\!+\!u)^{\frac{n-3}2}\ssf
\lesssim\,|\sigma|^N\,r^{-\frac{n-3}2}\ssf
\asymp\,|\sigma|^N\,t^{-\frac{n-3}2}\,.
\end{equation*}
Notice that the previous computations are valid in dimension \ssf$n\!>\!3$\ssf.
In dimension \ssf$n\!=\!3\ssf$,
the last estimate becomes
\,$|\sigma|^2\,(\ssf1\!-\ssb\log t\ssf)$
\,while, in dimension \ssf$n\!=\!2$\ssf,
\eqref{Estimate1} is replaced by
\begin{equation*}\textstyle
|\sigma|\,{\displaystyle\int_{-r}^{+r}}\!\frac{du}{\sqrt{\cosh r-\cosh u}}\,
\asymp\,|\sigma|\,{\displaystyle\int_{-r}^{+r}}
\hspace{-1mm}\frac{du}{\sqrt{r^2-u^2}}\,
\asymp\,|\sigma|\,.
\end{equation*}
Similarly
\begin{equation*}
\int_{\,0}^{\,2}\!d\lambda\,\chi_0(\lambda)\,
\widetilde a(\lambda)\,e^{\,i\ssf\lambda\ssf(t-u)}
\end{equation*}
yields a bounded contribution to \eqref{WaveKernel2}.
Let us eventually analyze the remaining contribution of
\begin{equation}\label{distribution}
\int_{\,0}^{+\infty}\hspace{-1mm}d\lambda\,
\lambda^{\frac{n-3}2\ssf-\ssf i\ssf\Im\sigma}\,e^{\,i\ssf\lambda\ssf(t-u)}\,,
\end{equation}
which is a classical distribution.
According to the properties of the Riesz distributions (\ref{Riesz}) in Appendix B,
we have indeed
\begin{equation*}\textstyle
{\displaystyle\int_{\,0}^{+\infty}}\hspace{-1mm}d\lambda\;
\lambda^{\frac{n-3}2\ssf-\ssf i\ssf\Im\sigma}\,e^{\,i\ssf\lambda\ssf(t-u)}
=\Gamma({\frac{n-1}2\!-\ssb i\ssf\Im\sigma})\;
e^{\,\frac\pi2\Im\sigma\,+\,i\ssf\frac\pi4(n-1)}\,
(t\!-\!u)^{-\frac{n-1}2+\,i\Im\sigma}
\end{equation*}
and it remains for us to estimate the expression
\begin{equation}\label{WaveKernel3}\textstyle
\frac{\Gamma(\frac{n-1}2\ssf-\,i\Im\sigma)}{\Gamma(-\ssf i\Im\sigma)}\,
(\sinh r)^{2-n}{\displaystyle\int_{-r}^{+r}}\!du\,
(\cosh r\!-\ssb\cosh u\ssf)^{\frac{n-3}2}\,
(t\!-\!u)^{-\frac{n-1}2+\,i\Im\sigma}\,.
\end{equation}
In order to do so, we discuss separately the odd and even--dimensional cases.
\smallskip

\noindent$\circ$
\textit{Subcase 2.a}\,:
\,Assume that \ssf$n\!=\!2\ssf m\!+\!1$ \ssf is odd.
\smallskip

\noindent
After \ssf$m\!-\!1$ integrations by parts,
\eqref{WaveKernel3} becomes
\begin{equation*}\textstyle
(-\,i\Im\sigma)\,(\sinh r)^{1-2\ssf m}
{\displaystyle\int_{-r}^{+r}}\!du\,
(t\!-\!u)^{-1+i\Im\sigma}\;
{\displaystyle\sum\nolimits_{\ssf j=1}^{m-1}}
a_j(u)\,(\cosh r\!-\ssb\cosh u\ssf)^{\ssf m-j-1}\,,
\end{equation*}
where \ssf$a_j(u)$ is a linear combination
of monomials $(\sinh u)^{\ssf j'}\ssb(\cosh u)^{\ssf j''}$
\!with $j'\ssb,j''\hspace{-1mm}\ge\!0$\ssf,
\ssf$j'\!+\ssb j''\!=\ssb j$
\ssf and \ssf$j'\!\ge\ssb2\ssf j\!+\!1\!-\!m$\ssf.
In particular \ssf$a_{\ssf m\ssf-1}(u)\!=\!(m\!-\!1)\ssf!\,(\sinh u)^{m-1}$.
After one more integration by parts, we get
\begin{equation*}\begin{aligned}
&\textstyle
(m\!-\!1)\ssf!\,(\sinh r)^{-m}
\bigl\{(t\!-\!r)^{\ssf i\Im\sigma}\!
+\ssb(-1)^m(t\!+\!r)^{\ssf i\Im\sigma}\bigr\}\;+\\
&\textstyle+\ssf
(\sinh r)^{1-2\ssf m}
{\displaystyle\int_{-r}^{+r}}\!du\,
(t\!-\!u)^{\ssf i\Im\sigma}\;
{\displaystyle\sum\nolimits_{\ssf j=1}^{m-1}}\,
\widetilde a_j(u)\,(\cosh r\!-\ssb\cosh u\ssf)^{\ssf m-j-1}\,,
\end{aligned}\end{equation*}
where \ssf$\widetilde a_j(u)\ssb
=\ssb\text{O}(\ssf r^{\ssf\max\{0,\ssf2j-m\}})$
\ssf and \ssf$\cosh r\!-\ssb\cosh u
\ssb=\ssb2\ssf\sinh\frac{r-u}2\ssf\sinh\frac{r+u}2\ssb
=\ssb\text{O}(r^2)$\ssf,
hence the last sum is \ssf$\text{O}(r^{\ssf m-2})$
\ssf and the last integral is \ssf$\text{O}(r^{\ssf m-1})$\ssf.
Notice that these terms vanish when \ssf$m\!=\!1$\ssf.
Thus \eqref{WaveKernel3} is
\ssf$\text{O}\ssf(\ssf r^{-m})\ssb
=\ssb\text{O}\ssf(\ssf t^{-\frac{n-1}2})$\ssf,
when \ssf$n\!=\!2\ssf m\!+\!1$ \ssf is odd.
\smallskip

\noindent$\circ$
\textit{Subcase 2.b}\,:
Assume that \ssf$n\!=\!2\ssf m$ \ssf is even.
\smallskip

\noindent
After $m\!-\!1$ integrations by parts,
\eqref{WaveKernel3} becomes this time
\begin{equation}\label{WaveKernel4}\textstyle
\frac{\Gamma(\frac12-\ssf i\Im\sigma)}{\Gamma(-\ssf i\Im\sigma)}\,
(\sinh r)^{2-2\ssf m}
{\displaystyle\int_{-r}^{+r}}\!du\,
(t\!-\!u)^{-\frac12+\ssf i\Im\sigma}\;
{\displaystyle\sum\nolimits_{\ssf j=1}^{m-1}}
a_j(u)\,(\cosh r\!-\ssb\cosh u\ssf)^{\ssf m-j-\frac32}\,,
\end{equation}
where \ssf$a_{\ssf m\ssf-1}(u)\ssb
=\ssb\frac{\Gamma(m-\frac12)}{\sqrt{\pi\ssf}}\,(\sinh u)^{m-1}$
and the other \ssf$a_j(u)$ are as before.
Since
\begin{equation*}\begin{aligned}
\textstyle
\frac{\Gamma(\frac12-\ssf i\Im\sigma)}
{\Gamma(-\ssf i\Im\sigma)}\,
&=\,\text{O}\ssf(\ssf|\sigma|^{\frac12})\,,\\
a_j(u)\,(\cosh r\!-\ssb\cosh u\ssf)^{\ssf m-j-\frac32}\,
&=\,\text{O}\ssf(\ssf r^{\ssf m-2})
\qquad\forall\;1\!\le\!j\!\le\!m\!-\!2\,,\\
{\displaystyle\int_{-r}^{+r}}\!du\,(t\!-\!u)^{-\frac12}
&\textstyle
\asymp\frac r{\sqrt{\ssf t\,}}\ssf
\asymp\sqrt{\ssf t\,}\ssf,
\end{aligned}\end{equation*}
the \ssf$m\!-\!2$ \ssf first terms in \eqref{WaveKernel4} are
\,$\text{O}\ssf\bigl(\,|\sigma|^{\frac12}\,t^{-\frac{n-1}2}\bigr)$\ssf.
Let us turn to the last term
\begin{equation}\label{WaveKernel5}\begin{aligned}
&\textstyle
\frac{\Gamma(m-\frac12)}{\sqrt{\pi}}\,
\frac{\Gamma(\frac12-\ssf i\Im\sigma)}{\Gamma(-\ssf i\Im\sigma)}\,
(\sinh r)^{2-2\ssf m}\\
&\times{\displaystyle\int_{-r}^{+r}}\!du\,
(t\!-\!u)^{-\frac12+\ssf i\Im\sigma}\,
(\sinh u)^{m-1}\,(\cosh r\!-\ssb\cosh u\ssf)^{-\frac12}\,,
\end{aligned}\end{equation}
which is obtained by taking \,$j\!=\!m\ssb-\!1$ in \eqref{WaveKernel4}.
Let us split the integral in (\ref{WaveKernel5}) as follows\,:
\begin{equation}\label{Integral1}
\int_{-r}^{\ssf r}\,
=\,\int_{-r}^{\ssf0}
+\,\int_{\,0}^{\ssf2r-t}\hspace{-1mm}
+\,\int_{\ssf2r-t}^{\,r}\,.
\end{equation}
Notice that our current assumption \,$\frac t2\!<\!r\!<\!t$
\ssf implies that \ssf$0\!<\!2\ssf r\!-\!t\!<\!r$\ssf.
Since
\begin{equation*}\textstyle
\cosh r\!-\ssb\cosh u\,
=\,2\,\sinh\frac{r-u}2\,\sinh\frac{r+u}2\,
\asymp\,(r\!-\!u)\ssf(r\!+\!u)\,,
\end{equation*}
the contribution to \eqref{WaveKernel5}
of the first integral in \eqref{Integral1}
\ssf can be estimated by
\begin{equation*}\textstyle
|\sigma|^{\frac12}\,t^{-\frac12}\,r^{\frac12-m}
{\displaystyle\int_{-r}^{\ssf0}}\frac{du}{\sqrt{\ssf r\ssf+\ssf u\ssf}}\,
\asymp\,|\sigma|^{\frac12}\,t^{-\frac{n-1}2}
\end{equation*}
and the contribution to (\ref{WaveKernel5})
of the last integral in \eqref{Integral1} by 
\begin{equation*}\textstyle
|\sigma|^{\frac12}\,(t\!-\!r)^{-\frac12}\,r^{\frac12-m}
{\displaystyle\int_{2\ssf r-t}^{\,r}}\frac{du}{\sqrt{\ssf r\ssf-\ssf u\ssf}}\,
\asymp\,|\sigma|^{\frac12}\,t^{-\frac{n-1}2}\,.
\end{equation*}
We handle the remaining integral
by performing the change of variables
\begin{equation*}\textstyle
v=\frac{t\ssf-\ssf r}{t\ssf-\ssf u}
\quad\Longleftrightarrow\quad
u=t\ssb-\ssb\frac{t\ssf-\ssf r}v
\end{equation*}
and by integrating by parts
\vspace{-4mm}
\begin{equation*}\begin{aligned}
&\textstyle
\frac{\Gamma(\frac12-\ssf i\Im\sigma)}{\Gamma(-\ssf i\Im\sigma)}\,
{\displaystyle\int_{\,0}^{\ssf2\ssf r-t}}\!du\;
(t\!-\!u)^{-\frac12+\ssf i\Im\sigma}\,(r\!-\!u)^{-\frac12}\,
\overbrace{\textstyle
\Bigl(\frac{\sinh\frac{r-u}2}{\frac{r-u}2}\Bigr)^{-\frac12}\,
\bigl(\ssf\sinh\frac{r+u}2\bigr)^{-\frac12}\,
(\sinh u)^{m-1}}^{A(r,u)}\\
&=\textstyle
\frac{\Gamma(\frac12-\ssf i\Im\sigma)}{\Gamma(-\ssf i\Im\sigma)}\,
(t\!-\!r)^{\ssf i\Im\sigma}
{\displaystyle\int_{1-\frac rt}^{\frac12}}dv\;
v^{-1-\ssf i\Im\sigma}\,(1\!-\!v)^{-\frac12}\,
A(r,t\!-\!\frac{t-r}v)\\
&=\textstyle
\frac{\Gamma(\frac12-\ssf i\Im\sigma)}{\Gamma(1-\ssf i\Im\sigma)}\,
(t\!-\!r)^{\ssf i\Im\sigma}\,
v^{-\ssf i\Im\sigma}\,(1\!-\!v)^{-\frac12}\,
A(r,t\!-\!\frac{t-r}v)\,
\Big|_{\ssf v\ssf=\ssf1-\frac rt}^{\,v\ssf=\ssf\frac12}\\
&\textstyle
-\frac{\Gamma(\frac12-\ssf i\Im\sigma)}{2\,\Gamma(1-\ssf i\Im\sigma)}\,
(t\!-\!r)^{\ssf i\Im\sigma}
{\displaystyle\int_{1-\frac rt}^{\frac12}}dv\;
v^{-\ssf i\Im\sigma}\,(1\!-\!v)^{-\frac32}\,
A(r,t\!-\!\frac{t-r}v)\\
&\textstyle
-\frac{\Gamma(\frac12-\ssf i\Im\sigma)}{\Gamma(1-\ssf i\Im\sigma)}\,
(t\!-\!r)^{\ssf1+\ssf i\Im\sigma}
{\displaystyle\int_{1-\frac rt}^{\frac12}}dv\;
v^{-2-\ssf i\Im\sigma}\,(1\!-\!v)^{-\frac12}\,
\partial_{\ssf2}A(r,t\!-\!\frac{t-r}v)\,.
\end{aligned}\end{equation*}
All resulting expressions are
\ssf$\text{O}\ssf\bigl(\,|\sigma|^{-\frac12}\,t^{\ssf m-\frac32}\ssf\bigr)$\ssf,
since
\begin{equation*}\textstyle
\frac{\Gamma(\frac12-\ssf i\Im\sigma)}{\Gamma(1-\ssf i\Im\sigma)}
=\text{O}\bigl(\ssf|\sigma|^{-\frac12}\bigr)\ssf,\quad
A(r,u)=\text{O}\bigl(\ssf t^{\ssf m-\frac32}\bigr)
\quad\text{and}\quad
\partial_{\ssf2}A(r,u)=\text{O}\bigl(\ssf t^{\ssf m-\frac52}\bigr)\ssf.
\end{equation*}
Thus \eqref{WaveKernel5}
and hence \eqref{WaveKernel4}, \eqref{WaveKernel3} are 
\ssf$\text{O}\ssf\bigl(\ssf|\sigma|^{\frac12}\,t^{-\frac{n-1}2}\bigr)$\ssf.
\vspace{2mm}

As a conclusion,
we have obtained the following estimate
in all dimensions \ssf$n\!\ge\!2$\,:
\begin{equation*}\textstyle
|\,\widetilde{w}_{\,t}^{\ssf\infty}(r)\ssf|\,
\lesssim\,t^{\ssf-\frac{n-1}2}
\quad\text{when}\quad
\frac t2\!<\!r\!<\!t\,.
\end{equation*}
\smallskip

\noindent$\bullet$
\textit{Case 3}\,:
\,Assume that \ssf$r\!>\!t$\ssf.
\smallskip

\noindent
In this case we estimate \ssf$\widetilde{w}_t(r)$
using the inverse Abel transform.
More precisely,
we apply the inversion formulae \eqref{inv1} and \eqref{inv2}
to the Euclidean Fourier transform
\begin{equation*}\textstyle
\widetilde{g}_{\,t}^{\ssf\infty}(r)\ssf
=\ssf\frac{e^{\ssf\sigma^2}}{\Gamma(-\ssf i\Im\sigma)}\,
{\displaystyle\int_{\,1}^{+\infty}}\hskip-1mm
d\lambda\,\chi_\infty(\lambda)\,|\mathbf{c}\hspace{.1mm}(\lambda)|^{-2}\,\lambda^{-\tau}\,
(\lambda^2\hskip-.75mm+\!{\tilde\rho}^{\ssf2})^{\frac{\tau-\sigma}2}\,
e^{\,i\ssf t\ssf\lambda}\,\cos\lambda\ssf r\,.
\end{equation*}

\noindent$\circ$
\textit{Subcase 3.a}\,:
\,Assume that \ssf$n\!=\!2\ssf m\!+\!1$ \ssf is odd.
\smallskip

\noindent
Then, up to a multiplicative constant,
\begin{equation*}\textstyle
\widetilde{w}_{\,t}^{\ssf\infty}(r)
=\bigl(\frac1{\sinh r}\ssf\frac\partial{\partial r}\bigr)^{\ssb m}\,
\widetilde{g}_{\,t}^{\ssf\infty}(r)\,.
\end{equation*}
Let us expand
\vspace{-6mm}
\begin{equation}\label{expansion1}\textstyle
\bigl(\ssb\overbrace{\textstyle\frac1{\sinh r}}^{\frac r{\sinh r}\frac1r}
\hskip-1mm\frac\partial{\partial r}\bigr)^{\ssb m}
=\ssf{\displaystyle\sum\nolimits_{\ell=1}^{\,m}}
\alpha_{\ssf\ell}^{\ssf 0}(r)\,\bigl(\ssf\frac1r\frac\partial{\partial r}\bigr)^\ell
\end{equation}
and furthermore
\begin{equation}\label{expansion2}\textstyle
\bigl(\ssf\frac1r\frac\partial{\partial r}\bigr)^\ell
={\displaystyle\sum\nolimits_{k=1}^{\,\ell}}\ssf
\beta_{\ell,k}\,r^{\ssf k-2\ssf\ell}\,\bigl(\frac\partial{\partial r}\bigr)^k.
\end{equation}
The coefficients \ssf$\beta_{\ell,k}$ \ssf in \eqref{expansion2} are constants,
while the coefficients \ssf$\alpha_{\ssf\ell}^{\ssf 0}(r)$ \ssf in \eqref{expansion1}
are smooth functions on \ssf$\R$\ssf,
which are linear combinations of products
\begin{equation*}\textstyle
\bigl(\frac r{\sinh r}\bigr)\times
\bigl(\ssf\frac1r\frac\partial{\partial r}\bigr)^{\ell_2}\bigl(\frac r{\sinh r}\bigr)
\times\,\cdots\,\times
\bigl(\ssf\frac1r\frac\partial{\partial r}\bigr)^{\ell_m}\bigl(\frac r{\sinh r}\bigr)
\end{equation*}
with \,$\ell_2\!+{\dots}+\ssb\ell_m\!=\ssb m\!-\!\ell$\ssf.
Consider first
\begin{equation}\label{expression1}\textstyle
\frac{e^{\ssf\sigma^2}}{\Gamma(-\ssf i\Im\sigma)}\,
{\displaystyle\int_{\,1}^{\ssf\frac6r}}
d\lambda\;\chi_\infty(\lambda)\,\lambda^{-\tau}\,
(\lambda^2\!+\!\tilde\rho^{\ssf2})^{\frac{\tau-\sigma}2}\,
e^{\,i\ssf t\ssf\lambda}\,
\bigl(\ssf\frac1r\frac\partial{\partial r}\bigr)^\ell
\cos\lambda\ssf r\,.
\end{equation}
Since
\,$\chi_\infty(\lambda)\ssf\lambda^{-\tau}\ssf
(\lambda^2\!+\ssb\tilde\rho^{\ssf2})^{\frac{\tau-\sigma}2}\ssb
e^{\,i\ssf t\ssf\lambda}
=\text{O}\ssf(\lambda^{-m-1})$
\,according to the assumption \,$\Re\sigma\!=\!m\!+\!1$
\,and
\,$\bigl(\frac1r\frac\partial{\partial r}\bigr)^\ell\ssb\cos\lambda\ssf r
=\text{O}\ssf(\lambda^{2\ssf\ell\ssf})$
\,by Taylor's formula,
the expression \eqref{expression1} is
\begin{equation*}\begin{cases}
\,\text{O}\ssf(1)
&\text{if \,}1\!\le\!\ell\!<\!\frac m2\,,\\
\,\text{O}\ssf(\ssf\log\frac1r\ssf)
&\text{if \;}\ell\!=\!\frac m2\,,\\
\,\text{O}\ssf(\ssf r^{\ssf m-2\ssf\ell\ssf})
&\text{if \,}\frac m2\!<\!\ell\!\le\!m\,,\\
\end{cases}\end{equation*}
hence \,$\text{O}\ssf(\ssf r^{-m}\ssf)$ \,in all cases.
Consider next
\begin{equation}\label{expression2}\textstyle
\frac{e^{\ssf\sigma^2}}{\Gamma(-\ssf i\Im\sigma)}\,
{\displaystyle\int_{\,\frac6r}^{+\infty}}\hskip-1mm
d\lambda\,\lambda^{-\tau}\,
(\lambda^2\!+\!\tilde\rho^{\ssf2})^{\frac{\tau-\sigma}2}\,
r^{\ssf k-2\ssf\ell\ssf}\bigl(\textstyle\frac\partial{\partial r}\bigr)^k
e^{\,i\ssf(t\pm r)\ssf\lambda}\,.
\end{equation}
Since
\,$\bigl(\textstyle\frac\partial{\partial r}\bigr)^k
e^{\,i\ssf(t\pm r)\ssf\lambda}\ssb
=\ssb(\pm\ssf i\lambda)^k\ssf e^{\,i\ssf(t\pm r)\ssf\lambda}$
\,and
\begin{equation*}
\lambda^{-\tau}\,
(\lambda^2\!+\ssb\tilde\rho^{\ssf2})^{\frac{\tau-\sigma}2}\,
(\pm\ssf i\lambda)^k\,e^{\,i\ssf(t\pm r)\ssf\lambda}
=\text{O}\ssf(\lambda^{k-m-1})\,,
\end{equation*}
the expression \eqref{expression2} is easily seen to be
\ssf$\text{O}\ssf(\ssf r^{\ssf m-2\ssf\ell\ssf})$
\ssf as long as \ssf$k\!<\!m$\ssf. For the remaining case, where \ssf$k\!=\!\ell\!=\!m$\ssf, let us expand
\begin{equation*}\textstyle
\lambda^{-\tau}\,
(\lambda^2\!+\ssb\tilde\rho^{\ssf2})^{\frac{\tau-\sigma}2}
\lambda^m
=\ssf\lambda^{-1-\ssf i\Im\sigma}\,
\bigl(\ssf1\!+\ssb\frac{\tilde\rho^{\ssf2}}{\lambda^2}
\bigr)^{\ssb\frac{\tau-\sigma}2}\ssb
=\ssf\lambda^{-1-\ssf i\Im\sigma}
+\ssf\text{O}\ssf\bigl(\,|\sigma|\ssf\lambda^{-3}\,\bigr)
\end{equation*}
and split
\begin{equation*}
\int_{\ssf\frac6r}^{+\infty}
=\,\int_{\ssf\frac6r}^{\frac6r+\frac1{r\pm t}}
+\,\int_{\frac6r+\frac1{r\pm t}}^{+\infty}
\end{equation*}
in \eqref{expression2}.
On the one hand, the resulting integrals
\begin{equation}\label{IntegralI}\textstyle
I_\pm=\ssf\frac{e^{\ssf\sigma^2}}{\Gamma(-\ssf i\Im\sigma)}\,
{\displaystyle\int_{\ssf\frac6r}^{\frac6r+\frac1{r\pm t}}}\ssb d\lambda\;
\lambda^{-1-\ssf i\Im\sigma}\,e^{\,i\ssf(t\pm r)\ssf\lambda}
\end{equation}
\vspace{-5mm}

\noindent
and
\vspace{-2mm}
\begin{equation}\label{IntegralII}\textstyle
I\!I_\pm=\ssf\frac{e^{\ssf\sigma^2}}{\Gamma(-\ssf i\Im\sigma)}\,
{\displaystyle\int_{\frac6r+\frac1{r\pm t}}^{+\infty}}\ssb d\lambda\;
\lambda^{-1-\ssf i\Im\sigma}\,e^{\,i\ssf(t\pm r)\ssf\lambda}
\end{equation}
are uniformly bounded. This is proved by integrations by parts\,:
\vspace{-1mm}
\begin{equation*}\begin{aligned}
\textstyle
I_\pm&\textstyle
=\ssf\frac{e^{\ssf\sigma^2}}{\Gamma(1-\ssf i\Im\sigma)}\ssf
\overbrace{
\lambda^{-\ssf i\Im\sigma}\,e^{\,i\ssf(t\pm r)\ssf\lambda}\,\Big|
_{\ssf\lambda\ssf=\ssf\frac6r}
^{\ssf\lambda\ssf=\ssf\frac6r+\frac1{r\pm t}}
}^{\text{O}\ssf(\ssf1\ssf)}\\
&\textstyle
\ssf\mp\,i\,\frac{e^{\ssf\sigma^2}}{\Gamma(1-\ssf i\Im\sigma)}\,
(\ssf r\!\pm\!t\ssf)\ssf\underbrace{
{\displaystyle\int_{\frac1r}^{\frac1r+\frac1{r\pm t}}}\!d\lambda\;
\lambda^{-\ssf i\Im\sigma}\,e^{\,i\ssf(t\pm r)\ssf\lambda}
}_{\text{O}\ssf\left(\frac1{r\pm t}\right)}\,
=\,\text{O}(1)\ssf,
\end{aligned}\end{equation*}
\vspace{-4mm}

\noindent
while
\begin{equation*}\begin{aligned}
\textstyle
I\!I_\pm&\textstyle
=\ssf\mp\,i\,\frac{e^{\ssf\sigma^2}}{\Gamma(-\ssf i\Im\sigma)}\,
\frac1{r\pm t}\ssf\overbrace{
\lambda^{-1-\ssf i\Im\sigma}\ssf e^{\,i\ssf(t\pm r)\ssf\lambda}\,\Big|
_{\ssf\lambda\ssf=\ssf\frac6r+\frac1{r\pm t}}
^{\ssf\lambda\ssf=\ssf+\infty}
}^{\text{O}\ssf(\ssf r\ssf\pm\ssf t\ssf)}\\
&\textstyle
\ssf\mp\,i\,\frac{e^{\ssf\sigma^2}(1+\ssf i\Im\sigma)}{\Gamma(-\ssf i\Im\sigma)}\,
\frac1{r\pm t}\ssf\underbrace{
{\displaystyle\int_{\frac6r+\frac1{r\pm t}}^{+\infty}}\!d\lambda\;
\lambda^{-2-\ssf i\Im\sigma}\ssf
e^{\,i\ssf(t\pm r)\ssf\lambda}
}_{\text{O}\ssf(\ssf r\ssf\pm\ssf t\ssf)}
=\,\text{O}(1)\ssf.
\end{aligned}\end{equation*}
\vspace{-2mm}

\noindent
Hence the contributions of \eqref{IntegralI} and \eqref{IntegralII}
to \eqref{expression2} are \,$\text{O}\ssf(\ssf r^{-m}\ssf)\ssf$.
On the other hand,
the remainder's contribution to \eqref{expression2} is obviously
\ssf$\text{O}\ssf(\ssf r^{\ssf2-m}\ssf)\ssf$.
As a conclusion,
\begin{equation*}\textstyle
|\,\widetilde{w}_{\,t}^{\ssf\infty}(r)\ssf|\,
\lesssim\,r^{-m}\,\lesssim\,t^{-\frac{n-1}2}
\end{equation*}
when \ssf$n\!=\!2\ssf m\!+\!1$ \ssf is odd.
\smallskip

\noindent$\circ$
\textit{Subcase 3.b}\,:
\,Assume that \ssf$n\!=\!2\ssf m$ \ssf is even $\ge\ssb4$\ssf.
\smallskip

\noindent
Then, up to a multiplicative constant,
\begin{equation}\label{IAT}\textstyle
\widetilde{w}_{\,t}^{\ssf\infty}(r)\ssf
=\ssf\frac{e^{\ssf\sigma^2}}{\Gamma(-\ssf i\Im\sigma)}\,
{\displaystyle\int_{\,r}^{+\infty}}\hskip-1mm ds\,
\frac{\sinh s}{\sqrt{\cosh s-\cosh r}}\,
\bigl(\frac1{\sinh s}\ssf\frac\partial{\partial s}\bigr)^{\ssb m}\,
\widetilde{g}_{\,t}^{\ssf\infty}(s)\,.
\end{equation}
Let us split
\begin{equation}\label{Integral2}
\int_{\ssf r}^{+\infty}\hspace{-1mm}
=\;\int_{\ssf r}^{\,6}\,+\;\int_{\ssf6}^{+\infty}\,.
\end{equation}
The following estimate is obtained
by resuming the proof of Theorem \ref{Estimatewtildetinfty}.i.b
in the odd--dimensional case\,:
\begin{equation*}\textstyle
\bigl|\ssf\bigl(\ssf\frac1{\sinh s}\ssf\frac\partial{\partial s}\bigr)^{\ssb m}\,
\widetilde{g}_{\,t}^{\ssf\infty}(s)\ssf\bigr|
\lesssim\ssf e^{-m\ssf s}
\quad\forall\;s\!\ge\!6\,.
\end{equation*}
Since
\begin{equation*}\textstyle
{\displaystyle\int_{\ssf6}^{+\infty}}\hspace{-1mm}ds\,
\frac{\sinh s}{\sqrt{\cosh s-\cosh r}}\,e^{-m\ssf s}\,
\lesssim{\displaystyle\int_{\,0}^{+\infty}}\hspace{-1.5mm}
\frac{du\vphantom{\big|}}{\sqrt{\ssf\sinh u\ssf}}\,
<+\infty\,,
\end{equation*}
the contribution to \eqref{IAT}
of the second integral in \eqref{Integral2}
is uniformly bounded.
Thus we are left with the contribution of the first integral,
which is a purely local estimate.
\medskip

\noindent
\textbf{Lemma C.1}\label{LemmaC1}\textit{
Let \ssf$m$ be an integer $\ge\!2$
and let \ssf$\lambda\!\ge\!1$\ssf, \ssf$r\!\le\!3$\ssf.
\begin{itemize}
\item[(i)]
Assume that \ssf$\lambda\ssf r\!\le\!6$\ssf.
Then
\begin{equation*}\textstyle
\theta(\lambda,r)\ssf=
{\displaystyle\int_{\,r}^{\,6}}\ssb ds\;
\frac{\sinh s}{\sqrt{\cosh s\,-\,\cosh r}}\;
\bigl(\frac1{\sinh s}\,\frac\partial{\partial s}\bigr)^m
\cos\lambda\ssf s
\end{equation*}
is \,$\mathrm{O}\ssf(\ssf\lambda^{2\ssf m-1-\varepsilon}\,
r^{-\varepsilon}\ssf)$\ssf,
\ssf for every \,$\varepsilon\!>\!0$\ssf.
\item[(ii)]
Assume that $\lambda\ssf r\!\ge\!6$\ssf.
Then
\begin{equation*}\textstyle
\theta^{\ssf\pm}(\lambda,r)\ssf=
{\displaystyle\int_{\,r}^{\,6}}\ssb ds\;
\frac{\sinh s}{\sqrt{\cosh s\,-\,\cosh r}}\;
\bigl(\frac1{\sinh s}\,\frac\partial{\partial s}\bigr)^m\,
e^{\ssf\pm\ssf i\ssf\lambda\ssf s}
\end{equation*}
has the following behavior\,:
\begin{equation*}\textstyle
\theta^{\ssf\pm}(\lambda,r)\ssf=\ssf
c_{\ssf\pm}\,\lambda^{m-\frac12}\,
(\ssf\sinh r)^{\frac12-m}\,
e^{\ssf\pm\ssf i\ssf\lambda\ssf r}
+\mathrm{O}\ssf(\ssf\lambda^{m-1}\,r^{-m}\ssf)
\end{equation*}
\end{itemize}
where \,$c_{\ssf\pm}$ is a nonzero complex constant.
}

\begin{proof}
We first prove (i). Recall that
\begin{equation*}\textstyle
\bigl(\frac1{\sinh s}\ssf\frac\partial{\partial s}\bigr)^m(\cos\lambda\ssf s)\,
=\,\begin{cases}
\,\text{O}\ssf(\lambda^{2\ssf m})
&\text{if \,}\lambda\ssf s\!\le\!6\ssf,\\
\,\text{O}\ssf(\lambda^m\ssf s^{-m})
&\text{if \,}\lambda\ssf s\!\ge\!6\ssf,\\
\end{cases}\end{equation*}
hence
\,$\bigl(\frac1{\sinh s}\ssf\frac\partial{\partial s}\bigr)^m(\cos\lambda\ssf s)
=\text{O}\ssf(\lambda^{2\ssf m-1-\varepsilon}\ssf s^{-1-\varepsilon})$
\,in both cases.
By combining this estimate with
\begin{equation*}
\sinh s\asymp s\,,
\qquad {\rm{and~}} \qquad
\cosh s-\cosh r\asymp s^2\ssb-r^2\ssf,
\end{equation*}
and by performing an elementary change of variables,
we reach our conclusion\,:
\begin{equation*}\textstyle
|\ssf\theta(\lambda,r)|\,
\lesssim\,\lambda^{2\ssf m-1-\varepsilon}\,
{\displaystyle\int_{\,r}^{\,6}}\ssb ds\;
s^{-\varepsilon}\,(s^2\!-\ssb r^2)^{-\frac12}\ssf
\le\,\lambda^{2\ssf m-1-\varepsilon}\,r^{-\varepsilon}
\underbrace{{\displaystyle\int_{\,1}^{+\infty}}\!ds\;
s^{-\varepsilon}\,(s^2\!-\!1)^{-\frac12}}_{<+\infty}.
\end{equation*}
We next prove (ii). Recall that
\begin{equation*}\textstyle
\bigl(\frac1{\sinh s}\ssf\frac\partial{\partial s}\bigr)^m\ssf(
e^{\ssf\pm\ssf i\ssf\lambda\ssf s})
=\bigl(\frac{\pm\,i\ssf\lambda}{\sinh s}\bigr)^m\ssf
e^{\ssf\pm\ssf i\ssf\lambda\ssf s}
+\text{O}\ssf(\lambda^{m-1}s^{-m-1})
\end{equation*}
The remainder's contribution to \ssf$\theta^{\ssf\pm}(\lambda,r)$
\ssf is estimated as above\,:
\begin{equation*}\textstyle
{\displaystyle\int_{\,r}^{\,6}}\ssb ds\;
\frac{\sinh s}{\sqrt{\cosh s\,-\,\cosh r}}\;
\lambda^{m-1}\,s^{-m-1}\,
\lesssim\,\lambda^{m-1}\,
{\displaystyle\int_{\,r}^{\,6}}\ssb ds\;
s^{-m}\,(s^2\!-\ssb r^2)^{-\frac12}\,
\lesssim\,\lambda^{m-1}\,r^{-m}\ssf.
\end{equation*}
In order to handle the contribution of
\ssf$\bigl(\frac{\pm\,i\ssf\lambda}{\sinh s}\bigr)^m
\ssf e^{\ssf\pm\ssf i\ssf\lambda\ssf s}$
\ssf to \ssf$\theta^{\ssf\pm}(\lambda,r)$\ssf,
\ssf let us perform the change of variables
\,$s\!=\!r(1\!+\!u)$\ssf,
\ssf so that
\begin{equation*}
\int_{\,r}^{\,6}ds\;=\,r\int_{\,0}^{\frac6r-1}\hspace{-1mm}du\;,
\end{equation*}
and let us expand
\begin{equation*}\begin{aligned}
&\textstyle
(\overbrace{\cosh s\ssb-\ssb\cosh r
\vphantom{\big|}
}^{2\,\sinh\frac{s-r}2\,\sinh\frac{s+r}2})^{-\frac12}\,
(\ssf\sinh s\ssf)^{1-m}\,
e^{\ssf\pm\ssf i\ssf\lambda\ssf s}\\
&\textstyle=r^{-m}\,e^{\ssf\pm\ssf i\ssf\lambda\ssf r}
\underbrace{\textstyle
(\ssf\frac{\sinh\frac{r\ssf u}2}{\frac{r\ssf u}2}\bigr)^{-\frac12}\ssf
\bigl(\ssf\frac{\sinh r(1+\frac u2)}{r(1+\frac u2)}\bigr)^{-\frac12}\ssf
\bigl(\ssf\frac{\sinh r(1+u)\vphantom{\frac12}}
{r(1+u)\vphantom{\frac12}}\bigr)^{1-m}
}_{A(r,u)}
\underbrace{\textstyle
u^{-\frac12}\ssf(1\!+\!\frac u2)^{-\frac12}\ssf(1\!+\!u\ssf)^{1-m}
\vphantom{\frac{\frac12}{\frac12}}
}_{B(u)}
e^{\ssf\pm\ssf i\ssf\lambda\ssf r\ssf u}\,.
\end{aligned}\end{equation*}
Notice that the expressions \,$A(r,u)$ \ssf and \ssf$B(u)$
\ssf can be expanded as follows\,:
\begin{equation}\textstyle
A(r,u)\ssf=\ssf\bigl(\frac{\sinh r}r)^{\frac12-m}\ssf
+\ssf\underbrace{\sum\nolimits_{j=1}^{+\infty}\,
A_j(r)\,(\ssf r\ssf u\ssf)^{\ssf j}}_{\widetilde A(r,u)}\ssf,
\end{equation}
\vspace{-4mm}
\begin{equation}\textstyle
B(u)\ssf=\,u^{-\frac12}\,
+\ssf\underbrace{\sum\nolimits_{j=1}^{+\infty}\,
B_{\ssf j}^{\,0}\,u^{\,j-\frac12}}_{\widetilde B(u)}
\quad\text{for \ssf$u$ \ssf small\,,}
\end{equation}
\vspace{-4mm}
\begin{equation}\textstyle
B(u)\ssf=\ssf\sqrt{\ssf2\ssf}\,u^{-m}\,
+\,{\displaystyle\sum\nolimits_{j=1}^{+\infty}}\,
B_{\,j}^{\ssf\infty}\,u^{-j-m}
\quad\text{for \ssf$u$ \ssf large\,.}
\end{equation}
Using these behaviors and integrating by parts,
we can estimate
\begin{equation*}\begin{aligned}
{\displaystyle\int_{\ssf0}^{\frac6r-1}}\hspace{-1mm}du\;
e^{\ssf\pm\ssf i\ssf\lambda\ssf r\ssf u}\,\widetilde A(r,u)\,B(u)\ssf
&\textstyle=\ssf
\frac1{\pm\ssf i\ssf\lambda\ssf r}\;
e^{\ssf\pm\ssf i\ssf\lambda\ssf r\ssf u}\,
\widetilde A(r,u)\,B(u)\,
\Big|_{\ssf u=0}^{\ssf u=\frac6r-1}\\
&\textstyle-\frac1{\pm\ssf i\ssf\lambda\ssf r}\,
{\displaystyle\int_{\ssf0}^{\frac6r-1}}\hspace{-1mm}du\;
e^{\ssf\pm\ssf i\ssf\lambda\ssf r\ssf u}\,
\frac\partial{\partial u}\ssf
\bigl\{\widetilde A(r,u)\,B(u)\bigr\}
\end{aligned}\end{equation*}
by \,$\text{O}\ssf(\frac1{\lambda\ssf r})$. The integrals
\begin{equation*}
{\displaystyle\int_{\,0}^{\ssf1}}du\;
e^{\ssf\pm\ssf i\ssf\lambda\ssf r\ssf u}\,\widetilde B(u)
\quad\text{and}\quad
{\displaystyle\int_{\,1}^{\frac6r-1}}\!du\;
e^{\ssf\pm\ssf i\ssf\lambda\ssf r\ssf u}\,B(u)
\end{equation*}
are estimated similarly.
In summary, we showed that
\begin{equation*}
\theta^{\ssf\pm}(\lambda,r)
=(\pm\ssf i\ssf)^m\,\lambda^m\,(\sinh r)^{\frac12-m}\,
r^{\ssf\frac12}\,e^{\ssf\pm\ssf i\ssf\lambda\ssf r}
{\displaystyle\int_{\,0}^{\ssf 1}}\ssb du\;
e^{\ssf\pm\ssf i\ssf\lambda\ssf r\ssf u}\,u^{-\frac12}
+\text{O}\ssf(\lambda^{m-1}\ssf r^{-m})
\end{equation*}
and we conclude by using the behavior of the elementary integral
\begin{equation*}
{\displaystyle\int_{\,0}^{\ssf 1}}du\;
e^{\ssf\pm\ssf i\ssf\lambda\ssf r\ssf u}\,u^{-\frac12}\,
=\,\lambda^{-\frac12}\,r^{-\frac12}
\underbrace{
\int_{\,0}^{+\infty}\hspace{-1mm}du\;
e^{\ssf\pm\ssf i\ssf u}\,u^{-\frac12}
}_{\text{constant}}\,
+\;\text{O}\ssf(\lambda^{-1}\ssf r^{-1})\,.
\end{equation*}
\end{proof}

\noindent
From now on, the discussion of Subcase 3.b is similar to Subcase 3.a.
On the one hand, we deduce from Lemma C.1.i that
\begin{equation*}\textstyle
{\displaystyle\int_{\,1}^{\ssf\frac6r}}
d\lambda\;\chi_\infty(\lambda)\,\lambda^{-\tau}\,
(\lambda^2\!+\!\tilde\rho^{\ssf2})^{\frac{\tau-\sigma}2}\,
e^{\,i\ssf t\ssf\lambda}\;\theta(\lambda,r)\,
=\,\text{O}\ssf(\ssf r^{\frac12-m}\ssf)\,.
\end{equation*}
On the other hand, by expanding
\begin{equation*}\textstyle
\lambda^{-\tau}\,
(\ssf\lambda^2\!+\ssb\tilde\rho^{\ssf2}\ssf)^{\frac{\tau-\sigma}2}
=\ssf\lambda^{-\sigma}\,
\bigl(\,1\ssb+\ssb\frac{\tilde\rho^{\ssf2}}{\lambda^2}\ssf\bigr)^{\frac{\tau-\sigma}2}
=\ssf\lambda^{-m-\frac12-\ssf i\Im\sigma}
+\ssf\text{O}\ssf\bigl(\,|\sigma|\,\lambda^{-m-\frac52}\ssf\bigr)
\qquad\forall\;\lambda\!\ge\!2
\end{equation*}
and \,$\theta^{\ssf\pm}(\lambda,r)$ \ssf according to Lemma C.1.ii\ssf,
we have
\begin{equation*}\begin{aligned}
&\textstyle
\frac{e^{\ssf\sigma^2}}{\Gamma(-\ssf i\Im\sigma)}\,
{\displaystyle\int_{\ssf\frac6r}^{+\infty}}\hspace{-1mm}
d\lambda\;\chi_\infty(\lambda)\,\lambda^{-\tau}\,
(\lambda^2\!+\!\tilde\rho^{\ssf2})^{\frac{\tau-\sigma}2}\,
e^{\,i\ssf t\ssf\lambda}\;\theta^{\ssf\pm}(\lambda,r)\\
&\textstyle
=\,c_{\ssf\pm}\,(\ssf I_\pm\!+\ssb I\!I_\pm\ssf)\,
(\ssf\sinh r)^{\frac12-m}\ssf
+\,\text{O}\bigl(\ssf r^{\frac12-m}\ssf\bigr)\,,
\end{aligned}\end{equation*}
where \,$I_\pm$ and \,$I\!I_\pm$ denote
the integrals \eqref{IntegralI} and \eqref{IntegralII},
which are uniformly bounded and whose sum is equal to
\begin{equation*}\textstyle
\frac{e^{\ssf\sigma^2}}{\Gamma(-\ssf i\Im\sigma)}\,
{\displaystyle\int_{\ssf\frac6r}^{+\infty}}\hspace{-1mm}d\lambda\;
\lambda^{-1-i\Im\sigma}\,e^{\,i\ssf(t\ssf\pm\ssf r)\ssf\lambda}\,.
\end{equation*}
As a conclusion, we obtain again
\begin{equation*}\textstyle
|\,\widetilde{w}_{\,t}^{\ssf\infty}(r)\ssf|\,
\lesssim\,r^{\frac12-m}\,
\lesssim\,t^{-\frac{n-1}2}\,.
\end{equation*}

\medskip

\noindent
\textbf{Remark C.2.}\textit{
The analysis above still holds in dimension \ssf$n\!=\!2$\ssf,
except for the first estimate in Lemma C.1,
which is replaced by
\begin{equation*}\textstyle
\theta(\lambda,r)=\mathrm{O}\ssf(\ssf\lambda\log\frac2r\ssf)\ssf.
\end{equation*}
As a result,
\begin{equation*}\textstyle
|\ssf\widetilde{w}_{\,t}^\infty(r)|\ssf
\lesssim\ssf|t|^{-\frac12}\ssf(\ssf1\!-\ssb\log|t|\ssf)\,.
\end{equation*}
}

\noindent
\textbf{Remark C.3.}\textit{
In order to estimate the wave kernel for small time,
we might have used the \textit{Hadamard parametrix\/} \cite[\S\;17.4]{Ho3}
instead of spherical analysis.
}

\end{document}